\definecolor{red}{rgb}{1,0,0}
\definecolor{gre}{rgb}{0,0.7,0}
\definecolor{blu}{rgb}{0,0,1}
\newtheorem{thm}{Theorem}[section]
\newtheorem{lem}[thm]{Lemma}
\newtheorem{prop}[thm]{Proposition}
\theoremstyle{definition}
\theoremstyle{remark}
\numberwithin{equation}{section}
\definecolor{esperance}{rgb}{0.0,0.5,0.0}
\newcommand{\dd}{\mathrm{d}}
\newcommand{\R}{\mathbb{R}}
\newcommand{\Z}{\mathbb{Z}}
\newcommand{\Q}{\mathbb{Q}}
\newcommand{\C}{\mathbb{C}}
\newcommand{\e}{\mathrm{e}}
\newcommand{\del}{\delta}
\newcommand{\eps}{\epsilon}
\newcommand{\cC}{\mathcal{C}}
\newcommand{\cD}{\mathcal{D}}
\newcommand{\cE}{\mathcal{E}}
\newcommand{\cN}{\mathcal{N}}
\newcommand{\cR}{\mathcal{R}}
\newcommand{\cS}{\mathcal{S}}
\newcommand{\SL}{\operatorname{SL}}
\newcommand{\SO}{\operatorname{SO}}
\newcommand{\Sp}{\operatorname{Sp}}
\newcommand{\ord}{\operatorname{ord}}
\newcommand*{\transp}[2][-1mu]{\ensuremath{\mskip1mu\prescript{\smash{\mathrm t\mkern#1}}{}{\mathstrut#2}}}
\newcommand\set[1]{\left\{#1\right\}}
\newcommand{\onto}{\xymatrix{\ar@{>>}[r]&}}
\newcommand{\eq}[1]
{
\begin{equation*}
{#1}
\end{equation*}
}
\newcommand{\eqlabel}[2]
{
\begin{equation}
{#2}\label{#1}
\end{equation}
}
\newcommand*{\rom}[1]{\expandafter\@slowromancap\romannumeral #1@}
\begin{document}

\title[Values of ternary quadratic forms at integers]{Values of ternary quadratic forms at integers and the Berry-Tabor conjecture for 3-tori}
\author{Wooyeon Kim \and Jens Marklof \and Matthew Welsh}

\address{Wooyeon Kim, June E Huh Center for Mathematical Challenges, Korea Institute for Advanced Study, Seoul, Republic of Korea \newline \rule[0ex]{0ex}{0ex} \hspace{8pt}{\tt wooyeonkim@kias.re.kr}}
\address{Jens Marklof, School of Mathematics, University of Bristol, Bristol BS8 1UG, U.K.\newline \rule[0ex]{0ex}{0ex} \hspace{8pt}{\tt j.marklof@bristol.ac.uk}}
\address{Matthew Welsh, Department of Mathematics, Michigan State University, 619 Red Cedar Road, C212 Wells Hall, East Lansing, MI 48864 U.S.A. \newline \rule[0ex]{0ex}{0ex} \hspace{8pt}{\tt welshmat@msu.edu}}

\date{6 January 2026}
\thanks{JM's research was supported by EPSRC grant EP/W007010/1. WK was supported by Korea Institute for Advanced Studies (HP101301).}

\begin{abstract}
Berry and Tabor conjectured in 1977 that spectra of generic integrable quantum systems have the same local statistics as a Poisson point process. We verify their conjecture in the case of the two-point spectral density for a quantum particle in a three-dimensional box, subject to a Diophantine condition on the domain's proportions. A permissible choice of width, height and depth is for example $1,2^{1/3},2^{-1/3}$. 
This extends previous work of Eskin, Margulis and Mozes (Annals of Math., 2005) in dimension two, where the problem reduces to the quantitative Oppenheim conjecture for quadratic forms of signature $(2,2)$. 
The difficulty in three and higher dimensions is that we need to consider the distribution of indefinite forms in shrinking rather than fixed intervals, which we are able to resolve for special diagonal forms of signature $(3,3)$ in various scalings, including a rate of convergence. A key step of our approach is to represent the relevant counting problem as an average of a theta function on $\SL(2,\Z)^3\backslash\SL(2,\R)^3$ over an expanding family of one-parameter unipotent orbits. The asymptotic behaviour of these unipotent averages follows from Ratner's measure classification theorem and subtle escape of mass estimates. 
\end{abstract}

\maketitle

%\tableofcontents

\section{Introduction}
\label{sec:intro}

\subsection{The Berry-Tabor conjecture}

A principle motivation for this work is the Berry-Tabor conjecture \cite{BerryTabor1977}, which asserts that the statistical correlations in the quantum spectrum of a generic, classically integrable system are that of a Poisson point process, i.e., are maximally uncorrelated. In contrast, the statistics of quantum systems with ``chaotic'' classical limit are expected -- according to Bohigas, Giannoni and Schmit \cite{BGS1984} -- to be governed by one of the Gaussian random matrix ensembles, where the choice of ensemble depends on the time-reversal symmetries of the system. Despite overwhelming numerical evidence and major theoretical advances, there is currently no complete proof of either phenomenon for a single example. 

We will here investigate the validity of the Berry-Tabor conjecture for a specific model, a particle confined to a $d$-dimensional box with edge lengths $\ell_1,\ldots,\ell_d$. The particle's classical motion is along straight lines with elastic reflection at the boundary. This motion lifts to the Kronecker flow on a $d$-torus obtained by reflection on all faces of the box, and is therefore completely integrable. The standard quantisation of this dynamical system is given by the Hamiltonian $H=-\Delta_D$, where $\Delta_D$ is the Dirichlet Laplacian on the cuboid $[0,\ell_1]\times\cdots\times[0,\ell_d]\subset\R^d$. 
The spectrum 
$0<\lambda_1\leq\lambda_2\leq \cdots \to\infty$ of the quantum Hamiltonian $H=-\Delta_D$ is given by the values of the positive definite quadratic form
\begin{equation}\label{Q0}
Q_0(m) = \sum_{j=1}^d  x_j m_j^2 , \qquad x_j = \frac{\pi^2}{\ell_j^2}, 
\end{equation}
where $m=(m_1,\ldots,m_d)$ runs over the integer vectors in $\Z^d$ with positive coefficients. 

The spectrum of the Laplacian for the above $d$-torus (comprising $2^d$ copies of the box) is represented by the same quadratic form, but $m$ now runs over all of $\Z^d$. Our results therefore apply directly to this case. In fact, the reflection symmetry of the rectangular torus under the dihedral group $D_{2h}$ allows a decomposition of its spectrum according to the one-dimensional unitary representations of $D_{2h}$. These are in turn realised by the Laplacian with Dirichlet or Neumann boundary conditions on the various faces of the original box.  

Weyl's law for the spectrum of $H=-\Delta_D$ tells us that
\begin{equation}\label{WeylsLaw}
\#\{ j : \lambda_j < T \} \sim \frac{\ell_1\cdots\ell_d}{(2\pi)^d} \,\omega_d \, T^{d/2} \qquad (T\to\infty),
\end{equation}
where $\omega_d$ denotes the volume of the unit ball in $\R^d$. In dimension $d=2$ the law is linear in $T$ and hence the average spacing between consecutive elements is asymptotically constant. This is no longer the case in dimension three and higher. Thus, in order to measure correlations on the correct local scale, we unfold the spectrum by setting
\begin{equation}\label{unfold}
\xi_j = \frac{\ell_1\cdots\ell_d}{(2\pi)^d} \,\omega_d \, \lambda_j^{d/2} .
\end{equation}
This simple rescaling produces a sequence whose average spacing is now asymptotic to constant one.

For $T>0$, the two-point (or pair correlation) measure $\cR_T$ of the unfolded, truncated sequence $(\xi_j \mid \xi_j<T)_j$ is defined by
\begin{equation}
\cR_T(A) = \#\{ (i,j) \mid i\neq j, \; \xi_i<T,\; \xi_j \in A+\xi_i\} ,
\end{equation}
with $A\subset\R$ a bounded Borel set. We are interested in the asymptotic behaviour for large $\xi$, and say that the two-point measure of the full sequence $(\xi_j)_j$ is Poissonian if
\begin{equation}
\lim_{\xi\to\infty} \frac{ \cR_T(A) }{T}  = |A| 
\end{equation}
for any Borel set $A\subset\R$ with $|\partial A|=0$. Here $|A|$ denotes the Lebesgue measure of $A$.
The reason for this terminology is that the above holds almost surely if $(\xi_j)_j$ is a realisation of a Poisson point process on $\R_{>0}$ of intensity one. 

We say that a real number $x$ is $\kappa$-Diophantine for some $\kappa \geq 1$ if there exists a finite constant $C>0$ such that $|x - \tfrac{r}{q}| \geq C q^{-\kappa -1}$ for all integers $r\in\Z$, $q\in\Z_{\neq 0}$. For example, all quadratic surds are $1$-Diophantine, and all irrational algebraic numbers are $\kappa$-Diophantine for every $\kappa>1$ (Roth's theorem). Furthermore, for any given $\kappa>1$, the set of $\kappa$-Diophantine numbers has full Lebesgue measure. 

The following theorem proves the Berry-Tabor conjecture for the second-order statistics of the spectrum of the Hamiltionan $H=-\Delta_D$. Equivalently, it states that the two-point correlations of the values of the degree $d$ homogeneous form $Q_0(m)^{d/2}$ at integers $m\in\Z_{\geq 1}^d$ are Possonian. 

\begin{thm}
  \label{thm:main}
  Let $(\xi_j)_j$ be the deterministic sequence given by the unfolded spectrum \eqref{unfold} of the Dirichlet Laplacian for a box in $\R^3$ with edge lengths $\ell_1,\ell_2,\ell_3$.
  If the ratios $\ell_i^2/\ell_j^2$ are $\kappa$-Diophantine for all $i \neq j$ with $\kappa$ sufficiently close to 1, then the pair correlation measure of $(\xi_j)_j$ is Poissonian.
 \end{thm}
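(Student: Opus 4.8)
The plan is to reduce the pair correlation statistics to a counting problem for the quadratic form $Q_0$, and then to a question about averages of a theta function over expanding unipotent orbits in $\SL(2,\Z)^3\backslash\SL(2,\R)^3$. First I would express $\cR_T(A)$, after unfolding, as a count of pairs $(m,n)\in(\Z_{\ge1}^3)^2$ with $Q_0(m)^{3/2}-Q_0(n)^{3/2}\in A+o(1)$ and $Q_0(n)^{3/2}\lesssim T$; linearising the map $t\mapsto t^{3/2}$ shows this is governed by the number of pairs with $Q_0(m)-Q_0(n)$ in an interval of length $\sim c\,\lambda^{-1/2}$ (a \emph{shrinking} target, of width comparable to the mean spacing of the values of $Q_0$), where $\lambda\asymp Q_0(n)$. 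Writing $m=n+k$, this becomes a count of representations of small values by the linearised form, i.e.\ a problem about the indefinite ternary-in-disguise form $B(n,k)=2\sum x_j n_j k_j + \sum x_j k_j^2$; assembling the $n$-average and introducing a smooth cutoff in $k$ turns the generating function into a product over $j=1,2,3$ of classical Jacobi theta functions evaluated along a geodesic/horocycle flow, so that $\cR_T(A)/T$ is asymptotically an integral of $\Theta(g_1 u(s),g_2 u(s),g_3 u(s))$ over an expanding piece of a unipotent orbit on $(\SL(2,\Z)\backslash\SL(2,\R))^3$.

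Next I would identify the limit. Invoking Ratner's theorem (equidistribution of unipotent orbits) together with the $\kappa$-Diophantine hypothesis on the ratios $\ell_i^2/\ell_j^2$ — which forces the orbit closure to be the full product space rather than a proper subgroup orbit, and simultaneously gives a Diophantine/quantitative-nondivergence input controlling the rate — the expanding unipotent average of the (truncated) theta function converges to the integral of $\Theta$ against Haar measure on $(\SL(2,\Z)\backslash\SL(2,\R))^3$. A direct computation of this integral, using the Siegel/unfolding identity for $\int\Theta$, should reproduce exactly $|A|$, i.e.\ the Poisson answer, so that $\lim_{T\to\infty}\cR_T(A)/T=|A|$. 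One also needs the boundary condition $|\partial A|=0$ to pass from smoothed to sharp cutoffs in $A$, which is routine given the convergence with a rate.

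The main obstacle, and the technical heart of the argument, is that the theta function $\Theta$ is \emph{unbounded} on the non-compact space $(\SL(2,\Z)\backslash\SL(2,\R))^3$: near the cusps it grows, and naively the expanding unipotent average could pick up escaping mass, so equidistribution of the orbit does not by itself control the average of $\Theta$. Resolving this requires (i) a careful truncation of $\Theta$ at height $\sim R$ in each cusp, (ii) quantitative escape-of-mass / non-divergence estimates for the one-parameter unipotent orbit — this is exactly where the $\kappa$-Diophantine condition with $\kappa$ close to $1$ enters, ensuring the orbit spends a controlled proportion of time in the compact core with an exponent strong enough to beat the growth of $\Theta$ — and (iii) showing the tail contributions from the truncation vanish in the limit. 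This is the three-dimensional analogue of the escape-of-mass difficulties in Eskin–Margulis–Mozes, but genuinely harder because the relevant intervals shrink: the "second moment" or effective-counting bounds must be strong enough to survive division by the shrinking window width $\lambda^{-1/2}$. I expect the bulk of the paper to be devoted to establishing these uniform bounds on truncated theta integrals along expanding translates of unipotent orbits, with a rate, and then deducing Theorem~\ref{thm:main} by combining them with Ratner's theorem as above.
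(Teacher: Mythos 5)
Your high-level picture matches the paper: pass to a theta function, write the correlation count as a unipotent average of $\widetilde\Theta_F$ over an expanding segment on $(\SL(2,\Z)\backslash\SL(2,\R))^3$, invoke equidistribution of unipotent orbits for the bounded part, and control the unbounded part via escape-of-mass estimates. The paper realises this concretely through the identity \eqref{eq:FourierRLM}, Theorem~\ref{thm:main2}, and the escape-of-mass bound Theorem~\ref{thm:escapeofmass}; your ``linearising $t\mapsto t^{3/2}$'' step is the approximation argument of Marklof that the paper cites to pass from Theorem~\ref{thm:main2} to Theorem~\ref{thm:main}. However, there are two concrete issues. First, you attribute to the $\kappa$-Diophantine hypothesis the role of forcing the unipotent orbit to equidistribute in the full product space. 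That is not what it does: density of $\Gamma' g_0 H g_0^{-1}$ in $\Gamma'\backslash G^3$ (Lemma~\ref{lem:density}) requires only rational independence of the $x_j$ (i.e.\ $q\cdot x\neq 0$ for nonzero $q\in\Z^3$), and Shah's theorem then gives unqualified equidistribution of the bounded part. The $\kappa$-Diophantine condition, with $\kappa$ close to $1$, is used \emph{exclusively} in the escape-of-mass analysis.

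Second, and more importantly, the ``technical heart'' you correctly identify is underspecified in a way that hides the real difficulty, so your plan as written does not close. To dominate the unbounded $\widetilde\Theta_F$ you need to control $\tfrac{1}{L}\int \alpha_3(g_0 n(\xi)a(t))^{\beta}\,d\xi$ for some $\beta>1$ (Theorem~\ref{thm:thetaasymptotic} gives $|\widetilde\Theta_F|\ll_F\alpha_3$), but the standard Eskin--Margulis--Mozes contraction argument for the Margulis function $\alpha_d^{\beta}$ under $n(\xi)a(s)$-averaging gives contraction only for $\beta<\tfrac{2}{d}=\tfrac{2}{3}$ here — the three factors each contribute one power and the unipotent measure only beats a $\delta^{1/3}$-sized sublevel set (Lemma~\ref{lem:cruderhocontraction}). ``Quantitative non-divergence'' alone does not bridge this gap. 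The paper's resolution is the genuinely new ingredient: it replaces $\alpha_3^{\beta}$ by the \emph{modified Margulis function} $\phi(g)=\Delta_3(g)^{-1}\rho_3(g)^{-\beta}$, where $\Delta_3$ measures pairwise separation of the $d_j/c_j$; this restores contraction for $1<\beta<2$ away from an exceptional set $\cE_{s,K}$ where roots nearly collide (Proposition~\ref{prop:globalcontraction}), and the contribution of $\cE_{s,K}$ is handled by an avoidance estimate (Proposition~\ref{prop:avoidanceMargulis}) that reduces to $\Gamma^2\backslash G^2$ and invokes the Lindenstrauss--Mohammadi--Wang effective equidistribution theorem together with the Diophantine bound of Lemma~\ref{lem:periodicorbits}. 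This modified-height-function mechanism, and the coupling to effective equidistribution to tame the exceptional set, is the step your proposal is missing; without it the plan stalls at $\beta<\tfrac23$, which does not majorise $\widetilde\Theta_F$.
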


 By Roth's theorem, the conditions of the theorem are satisfied, e.g., for $\ell_1= 1$ and algebraic numbers $\ell_2,\ell_3$ so that $\ell_2^2,\ell_3^2,\ell_2^2/\ell_3^2\notin\Q$. This includes the example given in the abstract. The analogous statement was proved by Eskin, Margulis and Mozes \cite{EskinMargulisMozes2005} for positive definite binary quadratic forms $Q_0$, under similar Diophantine conditions, where (as we will discuss further below) this is a special -- and particularly difficult -- case of the quantitative Oppenheim conjecture.
 The necessary unfolding for $d \geq 3$ requires a different approach from that taken in \cite{EskinMargulisMozes2005} or in the more recent effective version by Lindenstrauss, Mohammadi and Wang \cite{LMW2023b}.
 We will here use a similar strategy as in \cite{Marklof2002,Marklof2003} and translate the problem to convergence of unipotent averages over Siegel theta functions associated to $Q_0$. 
Theta functions were also used in connection with quantitative versions of the Oppenheim conjecture, as well as the Davenport-Lewis conjecture, for quadratic forms with five or more variables in the work of G\"otze \cite{Goetze2004} and Buterus, G\"otze, Hille and Margulis \cite{BGHM2022}. 

It is well known that the Diophantine conditions in Theorem \ref{thm:main} are satisfied for $\ell_1=1$ and almost every (in the sense of Lebesgue measure) choice of $\ell_2,\ell_3$. Our theorem therefore strengthens, in dimension $d=3$, recent results by Blomer and Li \cite{BlomerLi2023} proved for almost all rectangular tori.
Previous proofs of the Poisson nature of almost all flat tori are due to \cite{Sarnak1997} in dimension 2, and VanderKam \cite{VanderKam1999a,VanderKam1999b,VanderKam2000} in dimension $d\geq 4$. VanderKam \cite{VanderKam2000} also proves convergence to the Poissonian $k$-point correlation measure, provided $d\geq 2k$.

Triple and higher correlation densities are much less understood in the deterministic setting, i.e., under explicit conditions on the edge lengths $\ell_i$. The best current results for binary forms are due to Aistleitner, Blomer and Radziwi\l\l\ \cite{ABR2024}. The problem of understanding the distribution of gaps $\xi_{j+1}-\xi_j$ appears to be equally difficult; cf.~Bourgain, Blomer, Rudnick and Radziwi\l\l\ \cite{BBRR2017}.

The Diophantine conditions in Theorem \ref{thm:main} are likely not sharp. Note, however, the statement evidently fails for all rational length ratios, when the two-point measure diverges. Following Sarnak \cite{Sarnak1997}, see also \cite{VanderKam1999a,VanderKam1999b,VanderKam2000,Marklof2002,Marklof2002b,Marklof2003}, one can in fact show that the divergence persists for a set of second Baire category of edge lengths, i.e., topologically generic boxes.

\subsection{Uniform variant of the quantitative Oppenheim conjecture}

Instead of unfolding the sequence as in \eqref{unfold}, we can also count values in shrinking intervals to compensate for the increasing mean density. This will lead to a uniform variant of the quantitative Oppenheim conjecture of Eskin, Margulis and Mozes \cite{EskinMargulisMozes1998,EskinMargulisMozes2005}, which we will prove here for the special quadratic forms of signature $(3,3)$ given by
\begin{equation}
Q(m) = Q_0(m_1)-Q_0(m_2), \qquad m=(m_1,m_2),
\end{equation}
where $Q_0$ is the positive definite ternary diagonal form in \eqref{Q0}.

Denote by $\cS(\R^n)$ the Schwartz class of functions $\R^n\to\C$. For $M,L>0$, $F\in\cS(\R^d)$, $\psi\in\cS(\R)$ we define the counting measure $\cN_{M,L}$ by
\begin{equation}
 \label{eq:NLMdef}
  \cN_{M,L}(F,\psi) 
  =  \sum_{m\in \Z^{2d}} F(M^{-1} m) \psi ( L Q(m)) ,
\end{equation}
and extend this to Borel measurable functions as usual. Thus, if $F$ and $\psi$ are characteristic functions of a unit ball and interval $[a,b]$, respectively, then $\cN_{M,L}$ describes the distribution of values of $Q(m)$ in the interval $[a/L,b/L]$ as $m$ runs over all lattice points $\Z^d$ in the ball of radius $M$. In the classical setting of the quantitative Oppenheim conjecture \cite{EskinMargulisMozes1998,EskinMargulisMozes2005}, $L$ is fixed. A key observation is that in the signature $(2,2)$ case Diophantine conditions on the coefficients are necessary, whereas for signature $(3,1)$ and indefinite forms in more than four variables, only irrationality of the form is required.
However, this changes
if we count in shrinking intervals, i.e., $L\to\infty$ as $M\to\infty$ at suitable rates.

In view of \eqref{WeylsLaw}, the mean spacing between the values of $Q_0(m)$ is about $M^{-(d-2)}$, and hence we expect $\cN_{M,L}(F,\psi)$ to be of size $\asymp M^d\times M^{d-2}/L=M^{2d-2}/L$. The scaling $L=M^{d-2}$ is the critical scaling which captures the local correlations relevant to Theorem \ref{thm:main}, and we will see below that the following statement in fact implies the former as a corollary. 

Let $\e(z)=\exp(2\pi\mathrm{i} z)$, and denote by $D_{2h}$ the symmetry group of a generic rectangular cuboid in $\R^d$, centred at the origin, with $\ord D_{2h}=2^d$. 

\begin{thm}
  \label{thm:main2}
  Fix $d=3$.
  If the ratios $x_i/x_j$ are $\kappa$-Diophantine for all $i \neq j$ with $\kappa$ sufficiently close to 1, then for $\psi\in\cS(\R)$, $F\in\cS(\R^{2d})$, $L,M\geq 1$,
\begin{multline}\label{eq:main2}
\cN_{M,L}(F,\psi)   
= \frac{M^{2d-2}}{L}  \int_{\R} \psi(x) \dd x \;\int_{\R} \bigg( \int_{\R^{2d}} F(v) \e( \tfrac{1}{2} \xi Q(v)) \dd v \bigg) \dd \xi \\
+ M^d \, \psi(0) \sum_{\sigma\in D_{2h}} \int_{\R^d} F((u,\sigma(u))) \dd u  + O_{F,\psi}\bigg(\frac{M^{2d-2-\delta}}{L}\bigg)+o_{F,\psi}(M^d).
\end{multline}
for any $\delta < \tfrac{1}{\kappa + 1}$. 
\end{thm}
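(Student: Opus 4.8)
The plan is to convert $\cN_{M,L}(F,\psi)$, via a Fourier transform in the $\psi$-variable and Poisson summation, into an integral of a Siegel theta function over an expanding one-parameter unipotent orbit on $X=(\SL(2,\Z)\backslash\SL(2,\R))^{3}$ (strictly, on its metaplectic double cover); to peel off the contribution of the exact integer zeros of $Q$ as the explicit secondary term; and then to evaluate the remaining orbit integral by combining effective equidistribution of unipotent orbits with a quantitative bound on the growth of the theta function near the cusps.

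\emph{Step 1 (theta transform).} Writing $\psi(y)=\int_{\R}\wh\psi(\xi)\e(\xi y)\,\dd\xi$ gives $\cN_{M,L}(F,\psi)=\int_{\R}\wh\psi(\xi)\,\Phi_{M}(\xi L)\,\dd\xi$ with $\Phi_{M}(\tau)=\sum_{m\in\Z^{6}}F(M^{-1}m)\e(\tfrac12\tau Q(m))$, for a suitable normalisation of $\wh\psi$. Since $Q=\bigoplus_{j=1}^{3}x_{j}q$ with $q(a,b)=a^{2}-b^{2}$ of signature $(1,1)$, the Weil representation attached to $q$ supplies, for each $j$, a metaplectic $\SL(2)$; after the partial Fourier transform carrying $F\in\cS(\R^{6})$ into the Schr\"odinger model, $\Phi_{M}(\tau)$ equals $M^{6}$ times a theta function $\Theta_{F}$ on $X$ evaluated at $\bg(\tau,M)=(n(x_{j}\tau)\,a_{M})_{j=1}^{3}$, where $n(\cdot)$ is a unipotent one-parameter subgroup, $a_{M}$ a geodesic element tending to the cusp as $M\to\infty$, and $n(s)a_{M}=a_{M}n(sM^{2})$. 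Absorbing $\tau=\xi L$ and substituting $u=\xi LM^{2}$,
\begin{equation*}
\cN_{M,L}(F,\psi)=\frac{M^{4}}{L}\int_{\R}\wh\psi\!\Bigl(\frac{u}{LM^{2}}\Bigr)\,\Theta_{F}\bigl(a\cdot\bu(u)\bigr)\,\dd u,\qquad a=(a_{M},a_{M},a_{M}),
\end{equation*}
where $\bu(u)=(n(x_{1}u),n(x_{2}u),n(x_{3}u))$ traverses a \emph{single} unipotent one-parameter subgroup $\cU\subset\SL(2,\R)^{3}$, so that $a\cdot\bu(\R)$ is a $\cU$-orbit pushed a distance $\sim\log M$ into the cusp while the decay of $\wh\psi$ confines the relevant range of $u$ to an interval.

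\emph{Step 2 (secondary term and equidistribution).} Split $\Phi_{M}=\Phi_{M}^{0}+\Phi_{M}^{\ne0}$ according to whether $Q(m)=0$. The terms with $Q(m)=0$ are $\tau$-independent; for the given $x_{j}$ the integer isotropic vectors of $Q$ are the $(n,\sigma n)$, $\sigma\in D_{2h}$, apart from those with all $a_{j}^{2}\ne b_{j}^{2}$, whose count is $O_{F}(M^{2+\ve})$ --- each requires a nonzero integer solution of $\sum_{j}x_{j}(a_{j}^{2}-b_{j}^{2})=0$, of which there are $\ll M^{2}$ because no ratio $x_{i}/x_{j}$ is rational, and each equation $a_{j}^{2}-b_{j}^{2}=v_{j}\ne0$ has $\ll M^{\ve}$ solutions --- so by Poisson summation $\int_{\R}\wh\psi(\xi)\Phi_{M}^{0}(\xi L)\,\dd\xi=\psi(0)\,M^{3}\sum_{\sigma\in D_{2h}}\int_{\R^{3}}F((u,\sigma u))\,\dd u+o(M^{3})$, which is the second main term of \eqref{eq:main2} since $\int_{\R}\wh\psi=\psi(0)$. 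For $\Phi_{M}^{\ne0}$: because $\cU$ is unipotent, Ratner's measure classification gives $\overline{\cU\cdot y}=Hy$ for a closed $H\supseteq\cU$ with $Hy$ of finite volume; the only connected subgroups containing $\cU$ with a closed finite-volume orbit, besides $\SL(2,\R)^{3}$, are contained in twisted-diagonal blocks $\{(g,hgh^{-1},\ast)\}$ with $h=\diag{\sqrt{x_{i}/x_{j}},\sqrt{x_{j}/x_{i}}}$, and the irrationality of every $x_{i}/x_{j}$ makes each such block have no closed orbit on $X$; hence $H=\SL(2,\R)^{3}$ and the $\cU$-orbits equidistribute. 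The $\kappa$-Diophantine hypothesis upgrades this to effective equidistribution with a polynomial rate, uniformly over the cuspidal starting points $a\cdot y$ (Dani--Margulis linearisation together with the rate available for Diophantine orbits). Feeding this into the $u$-integral, replacing $\wh\psi(u/(LM^{2}))$ by $\int\psi$, and invoking the regularised Siegel--Weil identity $\int_{X}\Theta_{F}\,\dd\mu=\int_{\R}\bigl(\int_{\R^{6}}F(v)\e(\tfrac12\xi Q(v))\,\dd v\bigr)\dd\xi$, yields the first main term $\tfrac{M^{4}}{L}\int\psi\cdot\int_{\R}\int_{\R^{6}}F\,\e(\tfrac12\xi Q)$ together with $O(M^{4-\delta}/L)$; tracking the exponent gives every $\delta<\tfrac{1}{\kappa+1}$.

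\emph{Step 3 (escape of mass) --- the crux.} Step 2 is legitimate only for a compactly truncated theta function: $\Theta_{F}$ is unbounded, growing like a product of rank-one theta functions $\asymp(\text{height})^{3/2}$ as $a\cdot\bu(u)$ enters the cusp. Subtracting $\Phi_{M}^{0}$ removes the leading cuspidal term, but $\Phi_{M}^{\ne0}$ still diverges at the cusp because of lattice points $m$ with $0<|Q(m)|$ small lying near the rational isotropic subspaces, and one must show that the integral over $a\cdot\bu(\R)$ of this excess is $o(M^{3})$, uniformly in $L\ge1$. This rests on two ingredients: (i) a nondivergence estimate bounding the time $a\cdot\bu(\R)$ spends above a given height in $X$, where the Diophantine conditions are essential --- they prevent the diagonal excursions of $\bu(u)$ into the cusp from resonating, whereas for rational ratios the orbit would be trapped and the count would diverge --- and (ii) a pointwise bound on $\Phi_{M}^{\ne0}$ high in the cusp in terms of that height, so that (i) and (ii) combine to a convergent estimate which, once the $M^{4}/L$ prefactor is restored, is $o(M^{3})$. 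Carrying out (i)--(ii) while isolating exactly the $D_{2h}$-diagonal contribution, and with enough uniformity across the regimes $1\le L\le M$ and $L\ge M$ for both main terms and both error terms of \eqref{eq:main2} to emerge at the stated sizes, is the delicate part of the argument --- it is the analogue, in signature $(3,3)$ and in shrinking intervals, of the $(2,2)$ escape-of-mass analysis of Eskin--Margulis--Mozes. Assembling Steps 1--3 gives \eqref{eq:main2}.
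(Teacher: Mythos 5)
Your Step 1 is essentially the paper's reduction (1.12): Fourier inversion turns $\cN_{M,L}$ into a unipotent average of a theta function on $\Gamma^3\backslash G^3$. Your extraction of the diagonal term from the $Q(m)=0$ summands in Step 2 is also sound: the non-diagonal isotropic integer vectors number $O(M^{2+\epsilon})$ by the irrationality of the ratios $x_i/x_j$, and the remaining $(n,\sigma n)$ give $\psi(0)M^{3}\sum_{\sigma}\int F(u,\sigma u)\,\dd u$ by a Riemann sum. That is a legitimate and somewhat different-looking route to the second main term (the paper instead obtains it as the Siegel mean value $\int_{\Gamma'\backslash G^3}\widetilde\Theta_F\,\dd\mu$ via Lemma~\ref{lem:thetaint}, using mixing of the geodesic flow).

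The rest of Step 2 and Step 3, however, contain a structural error and a large unproved step. First, the asserted ``regularised Siegel--Weil identity'' $\int_{X}\Theta_F\,\dd\mu=\int_{\R}\int_{\R^{6}}F(v)\e(\tfrac12\xi Q(v))\,\dd v\,\dd\xi$ is false in this setting: the mean value of $\widetilde\Theta_F$ over $\Gamma'\backslash G^{3}$ is computed in Lemma~\ref{lem:thetaint} and equals the \emph{diagonal} term $\sum_{\sigma}\int F(x,\sigma x)\,\dd x$, not the singular integral on your right-hand side (a $3$-dimensional integral versus a $5$-dimensional surface integral). Correspondingly, equidistribution of the translated unipotent segment cannot produce the first main term; it reproduces the diagonal term, as in the paper's Section~\ref{sec:remaining}. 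The volume term $\tfrac{M^{2d-2}}{L}\int\psi\cdot\int\int F\e(\tfrac12\xi Q)$ actually comes from the ``major arc'' $|\xi|\lesssim M^{-1-\eta}$, where the orbit is deep in the cusp and the theta function is computed explicitly (the paper's Section~\ref{sec:smallxi}); you never carry out that computation. Relatedly, your claim that ``subtracting $\Phi_{M}^{0}$ removes the leading cuspidal term'' is incorrect: $\Phi_{M}^{0}=O(M^{3})$ while $\Phi_{M}(0)\asymp M^{6}$, so $\Phi_{M}^{\neq0}$ still blows up near $\xi=0$ to the same order. Second, and decisively, your Step 3 defers the entire escape-of-mass estimate — the paper's Theorem~\ref{thm:escapeofmass}, proved over Sections~\ref{sec:escape}--\ref{sec:avoidance} via modified Margulis functions, an avoidance estimate, and effective equidistribution on $\Gamma^{2}\backslash G^{2}$ — by calling it ``the delicate part.'' That estimate, together with the explicit major-arc computation, is the actual content of the theorem; without them the exponent $\delta<\tfrac{1}{\kappa+1}$ has no derivation and the argument does not close.
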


We remark that for the special choice $F(u_1,u_2)=\varphi_1(Q_0(u_1)) \varphi_2(Q_0(u_2))$ with $\varphi_i\in\cS(\R_{\geq 0})$, \eqref{eq:main2} becomes
\begin{multline}\label{eq:main222}
\cN_{M,L}(F,\psi)   
= \frac{M^{2d-2}}{L} \, \frac{d^2}{2}\, \frac{\omega_d^2}{x_1\cdots x_d} \int_{\R} \psi(x) \dd x \; \int_0^\infty \varphi_1(\lambda) \varphi_2(\lambda) \lambda^{d-2} d\lambda  \\
+ 2^dM^d \,\frac{d}{2}\, \frac{\omega_d}{(x_1\cdots x_d)^{1/2}}  \psi(0) \int_{\R^d} \varphi_1(\lambda)\varphi_2(\lambda) \lambda^{\frac{d}{2}-1} \dd \lambda  + O_{F,\psi}\bigg(\frac{M^{2d-2-\delta}}{L}\bigg)+o_{F,\psi}(M^d).
\end{multline}
If $L=M^{d-2}$, Theorem \ref{thm:main} now follows from \eqref{eq:main222} by the approximation argument of \cite[Section 2]{Marklof2002}. (Note that we need to take into account the dihedral symmetry, which explains the additional factor of $2^d$.)

% We conjecture that Theorem \ref{thm:main2} holds also for $d\geq 4$ under suitable Diophantine conditions.
As remarked earlier, Eskin, Margulis and Mozes established \eqref{eq:main2} (without a rate) for $L$ fixed and $d\geq 3$ in \cite{EskinMargulisMozes1998} only assuming at least one of the ratios $x_i/x_j$ is irrational, and for $d=2$ \cite{EskinMargulisMozes2005} allowing $\kappa$-Diophantine ratios for any $\kappa\geq 1$.
Also for $d = 2$ and $\kappa$-Diophantine ratios, Lindenstrauss, Mohammadi and Wang have proved a version of (\ref{eq:main2}) with an effective bound for the remainder, allowing for $L$ to grow as a small power of $M$.

The approach taken in \cite{EskinMargulisMozes2005} requires $L$ fixed and the bound for remainder in \cite{LMW2023b} depends quite sensitively on $L$, so taking $L$ as large as $M^{d-2}$ for $d \geq 3$ requires a different strategy.
Here we use Fourier inversion on $\psi$ to transform $\cN_{M,L}(F,\psi)$ into a unipotent average of a theta function.
The advantage is that $L$ appears only in the length of the unipotent average and not implicitly in the test functions.
This feature is used in a similar way in \cite{Marklof2002,Marklof2003} and as well as by Buterus, G\"otze, Hille and Margulis \cite{BGHM2022}. 

Buterus, G\"otze, Hille and Margulis \cite{BGHM2022} obtain convergence rates of the form \eqref{eq:main2} for general non-singular quadratic forms (both definite and indefinite) with more than five variables for expanding ($L\to 0$) and shrinking intervals ($L\to\infty$), as $M\to\infty$ under Diophantine conditions similar to those in \cite{EskinMargulisMozes2005}.
While our work provides a stronger rate of convergence in the special case we consider,
%under the strongest Diophantine condition, they obtain $ L^{\frac{1}{18}} M^{\frac{34}{9}}$, smaller than $L^{-1} M^4$ if $L < M^{\frac{4}{19}}$, while we get $L^{-1} M^{\frac{5}{2}}$)***
the key breakthrough is that we can deal with the scaling regime $M,L\to\infty$ with $L\geq M^{d-2-\delta}$, which is when the second term on the right hand side of \eqref{eq:main2} dominates the error term. This includes the critical scaling $L= M^{d-2}$ required for the proof of Theorem \ref{thm:main}.
A further advantage of our approach, particularly relevant to the calculation of the second term, is the deployment of theta sums with general cut-off functions $F$, via the Shale-Weil representation, not only classical theta series with Gaussian weights as in \cite{BGHM2022}.

As mentioned above, a rate of convergence for signature $(2,2)$, in the critical scaling ($L$ fixed), was recently obtained by Lindenstrauss, Mohammadi and Wang \cite{LMW2023b} as an application of their ground-breaking work on effective Ratner equidistribution \cite{LMW2022,LMW2023a}. 
Str\"ombergsson and Vishe \cite{AV2020} obtained rates of convergence for the quantitative Oppenheim conjecture for special inhomogeneous forms studied in \cite{Marklof2002,Marklof2003}. The quantitative Oppenheim conjecture for general inhomogeneous forms was proved by Mohammadi and Margulis \cite{MargulisMohammadi2011} without rate of convergence. More is known if one restricts to error rates valid for almost every quadratic form of a given signature; see the papers by Bourgain \cite{Bourgain2016}, Ghosh and Kelmer \cite{GhoshKelmer2017,GhoshKelmer2018}, Athreya and Margulis \cite{AthMar2018}, and Ghosh, Kelmer and Yu \cite{GKY2022,GKY2023}.

\subsection{Escape of mass and Margulis functions}

Our strategy for the proof of Theorem \ref{thm:main2} is to relate $\cN_{M,L}(F,\psi)$ to homogeneous dynamics on $\Gamma^d \backslash G^d$, $\Gamma = \SL(2, \Z)$, $G = \SL(2, \R)$, via a theta function $\widetilde\Theta_F$ defined on the semi-direct product of a Heisenberg group with $G^d$ using the Shale-Weil representation. The connection between the counting function $\cN_{M,L}(F,\psi)$ and theta function $\widetilde\Theta_F$ is given by
\begin{equation}
  \label{eq:FourierRLM}
  \begin{split}
\cN_{M,L}(F,\psi) 
& = \frac{1}{L} \int_{\mathbb{R}} \hat{\psi}( L^{-1}\xi)  \bigg(\sum_{m \in \mathbb{Z}^d} F(M^{-1} m) \e\big( \tfrac{1}{2} \xi Q(m) \big) \bigg)  \dd \xi \\
&  =  \frac{M^{d-2}}{L} \int_{\mathbb{R}} \hat{\psi} ( L^{-1}\xi ) \widetilde\Theta_F\big(1, g_0 n(\xi) a(\log M) g_0^{-1}\big)  \dd \xi,
\end{split}
\end{equation}
where $\hat{\psi}$ is the Fourier transform of $\psi$,
\begin{equation}
  \label{eq:psihatdef}
  \hat{\psi}(\xi) = \int_{\mathbb{R}} \psi(x) \e(- \xi x) \dd x .
\end{equation}
The theta function is evaluated at the product of the elements
\begin{equation}
  \label{eq:g0andef}
  g_0 = \left(
  \begin{pmatrix}
    x_j^{\frac{1}{2}} & 0 \\
    0 & x_j^{-\frac{1}{2}}
  \end{pmatrix} 
    \right)_{\!\! j\leq d} \in G^d,
%    \qquad X_0 =
%  \begin{pmatrix}
%    x_1 & & \\
%        & x_2 & \\
%        & & x_3
%  \end{pmatrix}
%  ,
\end{equation}
and
\begin{equation}
  \label{eq:nadef}
  n(\xi) =
  \begin{pmatrix}
    1 & \xi \\
    0 & 1
  \end{pmatrix}
  ,
  \qquad 
   a(t) =
  \begin{pmatrix}
    \e^{-t}& 0 \\
    0 & \e^{t}
  \end{pmatrix}
  \in G,
\end{equation}
where we identify $G$ with its diagonal embedding in $G^d$.

Our analysis of the unipotent average over $\xi$ in \eqref{eq:FourierRLM} is significantly complicated by the unboundedness of the theta function.
In fact, the first term on the right of \eqref{eq:main2} comes from the contribution of $\xi$ of size at most $\tfrac{\epsilon}{M}$ for a small $\epsilon > 0$, see Section \ref{sec:smallxi}.
We view this part of the integral as analogous to a ``major arc'' in the circle method. This is analogous to the approach taken in \cite{Marklof2002,Marklof2003} and \cite{BGHM2022}.

The remaining $\xi$, the ``minor arc,'' contributes the second term in \eqref{eq:main2}, which represents the ``diagonal'' term in the two-point density.
This is obtained from the equidistribution of the translated unipotent segment.
As results on the equidistribution of unipotent flows, e.g. Ratner's theorems, typically require bounded functions, we need to control the escape of mass in \eqref{eq:FourierRLM}.
We prove the following new upper bound for the Margulis function on the space of three-dimensional lattices, which is of independent interest.
\begin{thm}
  \label{thm:escapeofmass}
  Fix $d = 3$.
  If the ratios $x_i/x_j$ are $\kappa$-Diophantine for all $i \neq j$ with $\kappa$ sufficiently close to 1,
 then for any $0 < \delta < \tfrac{1}{\kappa + 1}$, $\eta >0$ sufficiently small and $1 \leq L \leq M^{d-2} $, there is $\beta > 1$ such that
  \begin{equation}
    \label{eq:escapeofmass}
    \frac{1}{L} \int_{M^{-1-\eta} < |\xi| \leq L} \alpha_3(g_0n(\xi)a(\log M))^\beta \dd \xi =
    O_{\beta,\kappa}\bigg(1+\frac{1}{L} M^{d-2 - \delta}\bigg).
  \end{equation}
\end{thm}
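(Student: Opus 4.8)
The plan is to make the orbit $\xi \mapsto g_0 n(\xi) a(\log M)$ explicit, turn the bound for $\alpha_3$ into a purely Diophantine integral, and then split the $\xi$-range into a ``generic'' part (producing the $O(1)$) and ``resonant'' parts controlled by the $\kappa$-Diophantine hypothesis, the worst of which contributes $\tfrac{1}{L}M^{d-2-\delta}$. First I would check that the $j$-th $\SL(2,\R)$-component of $g_0 n(\xi) a(\log M)$ is $n(x_j\xi)a(\log M)$, up to right-multiplication by the fixed element $\diag{x_j^{1/2},x_j^{-1/2}}$; hence the $j$-th lattice is, up to bounded distortion, $\Z^2\smallmat{M^{-1} & x_j\xi M \\ 0 & M}$, whose shortest vector has length comparable to $\min_{1\le a<M}\bigl(a^2 M^{-2}+M^2\langle a x_j\xi\rangle^2\bigr)^{1/2}$ (with $\langle\,\cdot\,\rangle$ the distance to $\Z$), so this factor is deep in the cusp exactly when $x_j\xi$ is well approximated on the scale $1/M$ by a rational of denominator $<M$, the depth being governed by that denominator. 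Using that the Margulis function of a direct sum of lattices is comparable (up to a dimensional constant) to the product of the factors', the problem becomes to bound $\frac{1}{L}\int_{M^{-1-\eta}<|\xi|\le L}\prod_{j=1}^{d} h_j(\xi)^{\beta}\,\dd\xi$, where $h_j(\xi)\in[1,M]$ is a resonance profile for $x_j\xi$, of size $\asymp M/a$ on an interval of length $\asymp M^{-2}$ around each $p/(ax_j)$ with $1\le a<M$ and $\asymp 1$ away from all of them. Crucially, the cut-off $|\xi|>M^{-1-\eta}$ just excises the resonance at $x_j\xi\approx 0$, so the numerators $p$ appearing are $\ge 1$ and the $\kappa$-Diophantine hypothesis on the ratios $x_i/x_j$ becomes applicable.

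I would then run a dyadic resonance decomposition indexed by the subset $S\subseteq\{1,\dots,d\}$ of ``active'' indices and, for $j\in S$, a dyadic scale $\theta_j\le 1/M$ recording how close $x_j\xi$ is to its nearest admissible fraction. On the non-resonant part $\prod_j h_j\asymp 1$, contributing $O(1)$ after division by $L$. For a single active index $S=\{j\}$, after rescaling $w=x_j\xi$ one is averaging the $\Z$-periodic function $h_j(n(w)a(\log M))^\beta$; this is an ergodic average of $\alpha_1^{\beta}$ on $\SL(2,\Z)\backslash\SL(2,\R)$, which is in $L^1$ precisely for $1<\beta<2$ (near the cusp $\alpha_1^\beta$ grows like the $(\beta/2)$-power of the height), so the single-index contribution is $O(L)$, i.e.\ $O(1)$ after dividing by $L$; the small extra piece near $|\xi|\asymp M^{-1-\eta}$ is $o(1)$.

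The crux is $|S|\ge 2$. Fixing $i\ne j$ in $S$: a $\xi$ that is $\theta_i$-resonant for $i$ through $p_i/a_i$ and $\theta_j$-resonant for $j$ through $p_j/a_j$ satisfies $\tfrac{p_i}{a_i x_i}\approx\xi\approx\tfrac{p_j}{a_j x_j}$, hence $\bigl|\tfrac{x_i}{x_j}-\tfrac{p_i a_j}{p_j a_i}\bigr|\lesssim \tfrac{\theta_i}{a_i}+\tfrac{\theta_j}{a_j}$, while $\kappa$-Diophantineness of $x_i/x_j$ forces the left side to be $\gtrsim (p_j a_i)^{-\kappa-1}$. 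This both shows such $\xi$ can occur only when $L$ is not too small relative to the $\theta$'s, and, via a standard continued-fraction count, bounds the number of simultaneously-resonant intervals and forces the two resonance spikes to be separated, which caps $\alpha_3$ on each interval. Estimating $\int\alpha_3^{\beta}$ over one interval, summing over the intervals and then over the dyadic scales $\theta_i,\theta_j$, one is led to a total of $O_{\beta,\kappa}(M^{d-2-\delta})$ for any $\delta<\tfrac{1}{\kappa+1}$, uniformly in $1\le L\le M^{d-2}$; dividing by $L$ gives the asserted term. The triple resonance $|S|=d=3$ is handled the same way and is smaller. Collecting the pieces, the argument closes for every $\beta$ in a nonempty range $(1,\beta_0(\kappa))$, so a $\beta>1$ is available together with any $\delta<\tfrac{1}{\kappa+1}$.

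The main obstacle is exactly the $|S|\ge 2$ analysis: $\alpha_3$ genuinely reaches size $\asymp M^{d}$ on the multiply-resonant set, so any bound that does not simultaneously exploit both that the $\kappa$-Diophantine condition makes simultaneously well-approximable $\xi$ \emph{sparse} and that the contributing spikes are \emph{forced apart} by the Diophantine gap loses too much — in particular it cannot keep $\beta$ above $1$, which the escape-of-mass application needs. Calibrating these two inputs so that the surviving power of $M$ is precisely $d-2-\delta$ with $\delta$ allowed to be any value below $\tfrac{1}{\kappa+1}$, uniformly over the whole admissible range of $L$, is the delicate point, and it is here that the hypothesis that $\kappa$ is close to $1$ enters.
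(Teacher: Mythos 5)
Your proposal sketches a genuinely different route from the paper's. The paper does not run a direct dyadic resonance count. Instead it adapts the Eskin–Margulis–Mozes contraction machinery: it defines a modified Margulis function $\widetilde\alpha(g)=\Delta_3(\gamma(g)g)^{-1}\rho_3(\gamma(g)g)^{-\beta}$, where $\Delta_3$ records the pairwise separation of the quantities $d_j/c_j$ (precisely the ``spikes forced apart'' mechanism you identify), proves a local/global contraction $\int_{-1}^1\widetilde\alpha(gn(\xi)a(s))\,\dd\xi\ll \e^{-3(2-\beta)s}\widetilde\alpha(g)$ away from an exceptional set $\cE_{s,K}$ (Lemmas~\ref{lem:phicontraction}, Proposition~\ref{prop:globalcontraction}), and then \emph{iterates} this contraction over a geometrically increasing sequence of scales $s_1,\dots,s_N$ summing to $\log M$. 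The cost of the exceptional set is controlled by an avoidance estimate (Proposition~\ref{prop:avoidanceMargulis}), whose proof requires Lindenstrauss--Mohammadi--Wang's polynomial effective equidistribution of unipotent flows on $\Gamma^2\backslash G^2$, combined with a divisor-style Diophantine count (Lemma~\ref{lem:cCcount}). Your high-level intuitions thus match the paper's underlying mechanism (the separation of resonances is exactly what $\Delta_3$ is tracking, and Lemma~\ref{lem:cCcount} is the ``continued-fraction count'' you gesture at), but you are proposing to arrive at the bound in a single shot rather than through the iteration.

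The gap is that the crucial computation is asserted, not carried out, and there is good reason to doubt that a one-shot resonance count closes on its own. Two specific issues. First, for the doubly- and triply-resonant sets you write that ``a standard continued-fraction count ... caps $\alpha_3$ on each interval,'' and that summing over dyadic scales ``leads to'' $O_{\beta,\kappa}(M^{d-2-\delta})$ uniformly in $1\le L\le M^{d-2}$; this is precisely the content of the theorem, and the delicate interplay between the count of resonance pairs (which carries divisor-function losses $(LC_1C_2)^{1+\epsilon}$ as in Lemma~\ref{lem:cCcount}), the Diophantine lower bound, and the required exponent $d-2-\delta$ with $\delta$ all the way up to $1/(\kappa+1)$ is exactly where the proof must close and where your sketch says nothing concrete. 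Second, and more structurally, the direct count alone does not suffice in the regime of \emph{moderate} resonance, where $\alpha_3$ is of intermediate size and the resonant set has non-negligible measure; the paper handles this regime with effective equidistribution (Proposition~\ref{prop:effectiveequidistribution}, via Lemma~\ref{lem:periodicorbits}), which your proposal never invokes. Your $|S|=1$ reduction to an ergodic average of $\alpha_1^\beta$ is fine, but for $|S|\ge 2$ an analogous soft ergodic argument does not automatically produce the rate $M^{d-2-\delta}$ uniformly in $L$, and the naive count over-counts in this regime. Unless you can supply the missing computation and either sidestep or import the effective equidistribution input, this remains a plausible-sounding outline rather than a proof.
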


Here $\alpha_d(g)$ is a Margulis function, introduced in the proof of the quantitaive Oppenheim conjecture \cite{EskinMargulisMozes1998}. It measures how high $\Gamma^d g$ is in the cusp of $\Gamma^d \backslash G^d$. As for Theorem \ref{thm:main2}, we expect the above bound to be true also for any  $d\geq 4$ under suitable Diophantine conditions.

  We note that for $L > M^{1 - \delta}$ the average (\ref{eq:escapeofmass}) is bounded.
  In comparision, \cite{BGHM2022} obtains bounds for similar (more general) averages of size roughly $M^{3\beta -2}$, which holds without Diophantine conditions.
  The stronger estimate (\ref{eq:escapeofmass}) in the special case we consider is main source for the improvements in Theorem \ref{thm:main2} over the results of \cite{BGHM2022}.
  We remark that Diophantine conditions are used in \cite{BGHM2022} to obtain pointwise bound for the Margulis functions, whereas the proof of Theorem \ref{thm:escapeofmass} uses the Diophantine condition in an essential way in controlling the average.

The proof of Theorem \ref{thm:escapeofmass} is given in Section \ref{sec:escape}.
Our strategy begins simlarly to \cite[Section~5]{EskinMargulisMozes1998}.
First a global contraction of $\alpha_3(g)^\beta$ under translates of unipotent segments (Proposition \ref{prop:globalcontraction}) is proved by combining purely local calculations with geometric properties of the cusp of $\Gamma^3 \backslash G^3$, and then \eqref{eq:escapeofmass} is proved by iterating the global contraction.

However, a straightforward adaptation of \cite{EskinMargulisMozes1998} only establishes \eqref{eq:escapeofmass} for $\beta < \tfrac{2}{3}$ while $\beta > 1$ is required for \eqref{eq:FourierRLM}.
The lack of contraction for $\beta > \tfrac{2}{3}$ is already evident in the local calculations and is related to possible repeated roots of a certain polynomial.
We regain contraction away from an exceptional set by modifying the Margulis function to account for the distance between roots of this polynomial using a strategy developed in \cite{kim2025momentsmargulisfunctionsindefinite}.
The bound \eqref{eq:escapeofmass} for $\beta > 1$ is then obtained by iterating the contraction and by controlling the contribution of the exceptional set by an avoidance estimate (Proposition \ref{prop:avoidanceMargulis}).
Our proof of this avoidance estimate uses the above-mentioned work by Lindenstrauss, Mohammadi, and Wang \cite[Theorem~1]{LMW2023a} on the polynomial effective equidistribution of one-parameter unipotent flows on $\Gamma^2 \backslash G^2$.

\section{Preliminaries}
\label{sec:prelims}

\subsection{Theta functions and Shale-Weil representation}
\label{sec:oscillatortheta}

The Heisenberg group $H_d$ is defined to be the set $\R^d \times \R^d \times \R$ with multiplication given by
\begin{equation}
  \label{eq:Hmult}
  (x_1, y_1, z_1)(x_2, y_2, z_2) 
  = (x_1 + x_2, y_1 + y_2, z_1 + z_2 - \tfrac{1}{2}(x_1\transp{y_2} - x_2\transp{y}_1)),
\end{equation}
where vectors in $\R^d$ are represented as rows.
As $\Sp(d, \R)$ preserves the symplectic form $x_1\transp y_2 - x_2\transp y_1$, $g \in G^d$ acts on $H_d$ by automorphisms
\begin{equation}
  \label{eq:gautomorphism}
  h^g = \left(
  \begin{pmatrix}
    x & y
  \end{pmatrix}
  g,
  z\right),\ \mathrm{where\ } h= (x, y, z).
\end{equation}
The group $H_d \rtimes G^d$ is then defined to have multiplication
\begin{equation}
  \label{eq:Jacobimult}
  (h_1, g_1)(h_2, g_2) = (h_1 h_2^{g_1^{-1}}, g_1g_2). 
\end{equation}

The Schr\"odinger representation $W$ of $H_d$ acts on $L^2(\R^d)$ by the unitary transformations
\begin{equation}
  \label{eq:Schrodingerdef}
  [W_d(x, y, z)f](t) = \e(-z + \tfrac{1}{2} x\transp{y}) f(t + x).
\end{equation}
For $g \in G^d$, we obtain another unitary representation $W^g$ of $H_d$ by $W^g(h) = W(h^g)$. By the Stone-von Neumann theorem on the unitary equivalence of infinite-dimensional, irreducible representations of $H_d$, there is an operator $R_d(g)$ that intertwines $W$ and $W^g$, i.e., for which
\begin{equation}
  \label{eq:Rintertwine}
  W_d^g = R_d(g)^{-1} W_d R_d(g) .
\end{equation}
These $R_d(g)$ yield the projective Shale-Weil representation of $\Sp(d,\R)$, which is central in the construction of Siegel theta functions associate with a quadratic form in $d$ variables. Since we are dealing here with diagonal forms only, it suffices to consider a subgroup of $\Sp(d, \R)$, which we identify with the $d$-power of $G=\SL(2,\R)=\Sp(1,\R)$ via the embedding
\begin{equation}
  \label{eq:gembed}
  G^d \to \Sp(d,\R),\qquad 
  \left(
  \begin{pmatrix}
    a_j & b_j \\
    c_j & d_j 
  \end{pmatrix}
  \right)_{1\leq j \leq d} \mapsto
  \begin{pmatrix}
    a_1 &  & & b_1 & &  \\
        & \ddots & & & \ddots & \\
        & & a_d & & & b_d \\
    c_1 &  & & d_1 & &  \\
        & \ddots & & & \ddots & \\
        & & c_d & & & d_d \\
  \end{pmatrix}
  .
\end{equation}
The Shale-Weil representation $R_d$ restricted to $G^d$ is then given as the $d$th tensor product of $R_1$. That is, for $g = (g_1,\ldots,g_d) \in G^d$, we have
\begin{equation}
R_d(g) = R_1(g_1) \otimes \cdots \otimes R_1(g_d) . 
\end{equation}
Proposition \ref{prop:oscillator} gives an explicit method for computing unitary operators $R_1(g)$ on a dense subset of $L^2(\R)$.

\begin{prop}
  \label{prop:oscillator}
  Let $f\in\cS(\R)$.
  For 
  \begin{equation}
    \label{eq:poscillator}
    p =
    \begin{pmatrix}
      a & b \\
      0 & a^{-1}
    \end{pmatrix}
  \end{equation}
  and any $g \in G$, we have
  \begin{equation}
    \label{eq:poscillator1}
    R_1(p)f(x) = |a|^{\frac{1}{2}} \e( \tfrac{1}{2}  ab x^2) f(ax).
  \end{equation}
  Moreover, for $ g = 
  \begin{pmatrix}
    a & b \\
    c & d
  \end{pmatrix}
  \in G$ with $c \neq 0$, we have
  \begin{equation}
    \label{eq:Fouriertransform}
    R_1(g)f(x) = |c |^{-\frac{1}{2}} \e ( \tfrac{a}{2c}x^2) \int_{\R} f(y) \e( \tfrac{d}{2c}y^2 - \tfrac{1}{c} x y) \dd y.
  \end{equation}
\end{prop}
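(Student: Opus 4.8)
The plan is to verify directly that the operators defined by the right-hand sides of \eqref{eq:poscillator1} and \eqref{eq:Fouriertransform} are unitary on $L^2(\R)$ and satisfy the intertwining relation \eqref{eq:Rintertwine}; since by Stone--von Neumann any intertwiner of $W_1$ with $W_1^g$ is determined up to a scalar, producing explicit unitary intertwiners of the stated form is precisely the content of the proposition (and it fixes the normalization of $R_1$ used throughout the paper). I will use freely that $\cS(\R)$ is preserved both by multiplication by a unimodular Gaussian $x\mapsto\e(\tfrac12\alpha x^2)$ and by the Fourier transform, so that all integrals below converge absolutely and the identities hold pointwise.

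First I would treat the parabolic case \eqref{eq:poscillator}. For $h=(x,y,z)$ one reads off from \eqref{eq:gautomorphism} the explicit form of $h^{p}$, and the operator $R(p)$ on the right of \eqref{eq:poscillator1} is the composition of multiplication by $\e(\tfrac12 abx^2)$ with the $L^2$-isometry $f(x)\mapsto|a|^{1/2}f(ax)$, hence unitary. The relation \eqref{eq:Rintertwine} is equivalent to $R(p)\,W_1(h^{p})=W_1(h)\,R(p)$ for all $h$, and this is a short computation: applying both sides to $f\in\cS(\R)$ and inserting \eqref{eq:Schrodingerdef}, the arguments of $f$ agree on the two sides, and the remaining phases match after expanding $\tfrac12 ab(s+x)^2$. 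In particular, taking $a=1$ shows that $R_1\smallmat{1&b\\0&1}$ is multiplication by $\e(\tfrac12 bx^2)$.

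For $c\neq0$ I would reduce to the previous case via the Bruhat factorization
\[\mat{a&b\\c&d}=\mat{1&a/c\\0&1}\mat{0&-1\\1&0}\mat{c&d\\0&c^{-1}},\]
which is valid since $ad-bc=1$. Because $G^d$ acts on $H_d$ on the right, one has $(h^{g_1})^{g_2}=h^{g_1g_2}$ directly from \eqref{eq:gautomorphism}, so a product of intertwiners for $g_1$ and $g_2$ is an intertwiner for $g_1g_2$; it therefore suffices to handle the Weyl element $w=\smallmat{0&-1\\1&0}$ and then compose the three factors. One checks, just as in the parabolic case, that the Fourier transform $\mathcal{F}f(x)=\int_\R f(y)\e(-xy)\,\dd y$ is a unitary intertwiner for $w$ (using the form of $h^{w}$ and the fact that Fourier transform exchanges translation and modulation). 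Composing $R_1\smallmat{c&d\\0&c^{-1}}$, then $\mathcal{F}$, then $R_1\smallmat{1&a/c\\0&1}$, and substituting $u=cy$ in the resulting integral --- which produces the factor $|c|^{-1}$ regardless of the sign of $c$ --- yields exactly \eqref{eq:Fouriertransform}, and unitarity is automatic since $R_1(g)$ is then a composition of unitary operators.

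I do not expect a real obstacle: the argument is a standard, if fiddly, computation. The points that require care are keeping the conventions consistent --- the right action in \eqref{eq:gautomorphism}, the row-vector convention for $(x,y)$, the signs of $a$ and $c$, and the several quadratic phases --- and checking that the formulas in \eqref{eq:poscillator1} and \eqref{eq:Fouriertransform} are genuinely compatible under the composition above, so that no stray metaplectic phase is introduced and they do define a consistent family $R_1(g)$.
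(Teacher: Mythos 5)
The paper states Proposition~\ref{prop:oscillator} without supplying a proof (it is a standard normalization of the metaplectic/Shale--Weil intertwiner), so there is no in-paper argument to compare against; your proposal is the natural way to verify it. Your approach is correct: the operators on the right of \eqref{eq:poscillator1} and \eqref{eq:Fouriertransform} are visibly unitary, the intertwining check reduces, via $(h^{g_1})^{g_2}=h^{g_1g_2}$, to the parabolic generators and the Weyl element, and the Bruhat factorization
\[
\begin{pmatrix} a & b \\ c & d \end{pmatrix}
= \begin{pmatrix} 1 & a/c \\ 0 & 1 \end{pmatrix}
\begin{pmatrix} 0 & -1 \\ 1 & 0 \end{pmatrix}
\begin{pmatrix} c & d \\ 0 & c^{-1} \end{pmatrix}
\]
with the substitution $u=cy$ reproduces \eqref{eq:Fouriertransform} exactly, including the modulus $|c|^{-1/2}$. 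Two small points of hygiene worth flagging if you write this up in full. First, be aware that \eqref{eq:Schrodingerdef} as typeset in the paper is missing the modulation factor $\e(y\transp{t})$; with it included, the check that $\mathcal{F}$ intertwines $W_1$ and $W_1^w$ goes through as you describe (and without it, $W_1$ would be trivial on the $y$-slot and the Stone--von Neumann argument could not even get started). Second, your phrase that Stone--von Neumann ``determines the intertwiner up to a scalar'' is right, but note it leaves a unimodular phase undetermined; the proposition is thus not a consequence of Stone--von Neumann alone but rather the statement that this particular phase-normalization of $R_1$ on the parabolic and Weyl generators, propagated through the Bruhat decomposition, yields precisely \eqref{eq:Fouriertransform} with no residual metaplectic phase. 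Your final paragraph acknowledges exactly this, and your composition computation is what actually establishes it.
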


For $f\in\cS(\R^d)$, the theta function $\Theta_f$ is defined by
\begin{equation}
  \label{eq:thetadef}
  \Theta_f(h, g) = \sum_{m \in \Z^d} [W(h)R_d(g)f](m).
\end{equation}
We observe that for $1 =(0,0,0) \in H_d$ and $g = \left(
\begin{pmatrix}
  1 & x_j \\
  0 & 1
\end{pmatrix}
\begin{pmatrix}
  y_j^{\frac{1}{2}} & 0 \\
  0 & y_j^{-\frac{1}{2}}
\end{pmatrix}
\right)_{\!\!1\leq j\leq d}$,
\begin{equation}
  \label{eq:thetaex}
  \Theta_f(1, g)
  = (y_1 \cdots y_d)^{\frac{1}{4}}\sum_{m \in \Z^d} f\left(( m_j y_j^{\frac{1}{2}})_j\right) \e\bigg( \tfrac{1}{2} \sum_{j=1}^d x_jm_j^2 \bigg).
\end{equation}
We now extend the above to a unitary representation $R_{d,d}(g)$ of $G^d$ on $L^2(\R^{2g})$
as the tensor product
\begin{equation}
R_{d,d}(g) = R_d(g) \otimes [C R_d(g) C], 
\end{equation}
where $C$ denotes complex conjugation. We highlight that $R_{d,d}$ is (unlike $R_1$, say) an actual representation for all $d$. The corresponding theta function is defined, for $F\in\cS(\R^{2d})$, by
\begin{equation}
  \label{eq:thetadefdd}
  \widetilde\Theta_F(h, g) = \sum_{m \in \Z^{2d}} [W(h)R_{d,d}(g)F](m).
\end{equation}
For example, for $F=f\otimes \overline f$ we have
$\widetilde\Theta_F(h, g) = |\Theta_f(h, g)|^2$.
Furthermore, for $g$ as in \eqref{eq:thetaex}, we have the explicit formula 
\begin{equation}
  \label{eq:thetaexdd}
  \widetilde\Theta_F(1, g)
  = (y_1 \cdots y_d)^{\frac{1}{2}}\sum_{(m,n) \in \Z^{2d}} F\left(( m_j y_j^{\frac{1}{2}})_j,( n_j y_j^{\frac{1}{2}})_j\right) \e\bigg( \tfrac{1}{2} \sum_{j=1}^d x_j(m_j^2-n_j^2) \bigg).
\end{equation}

\subsection{Automorphy of theta functions}
\label{sec:automorphy}

Define
\begin{equation}
  \label{eq:Gammabar}
  \bar{\Gamma} = \{ ((m, n, z)h_\gamma, \gamma) \subset H_d\rtimes G^d : m, n \in \Z^d,\ z \in \R, \gamma \in \Gamma^d\},
\end{equation}
where for $\gamma = (
\begin{pmatrix}
  a_j & b_j \\
  c_j & d_j
\end{pmatrix}
)_j,$ $h_\gamma = ((\tfrac{1}{2} c_jd_j)_j, (\tfrac{1}{2}a_jb_j)_j , 0)$.
We note that modulo left multiplication by $\Z^d \times \Z^d \times \R \subset H_d$,
\begin{equation}
  \label{eq:hgammamult}
  h_{\gamma_1 \gamma_2} \equiv h_{\gamma_1} (h_{\gamma_2})^{\gamma_1^{-1}},
\end{equation}
so \eqref{eq:Gammabar} does define a subgroup. The following theorem follows immediately from \cite[Theorem~4.1]{MarklofWelsh2021a}; cf.~also \cite{Marklof2003}.

\begin{thm}
  \label{thm:thetaautomorphy}
  For $F\in\cS(\R^{2d})$,  $\bar\gamma \in \bar{\Gamma}$ and $(h,g) \in H_d \rtimes G^d$,
  \begin{equation}
    \label{eq:thetaautomorphy}
     \widetilde\Theta_F\big(\bar\gamma (h, g)\big) =  \widetilde\Theta_F(h, g) .
  \end{equation}
\end{thm}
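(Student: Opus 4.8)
The statement is an invariance of the theta function $\widetilde\Theta_F$ under the lattice $\bar\Gamma \subset H_d \rtimes G^d$, and since the excerpt tells us it "follows immediately from \cite[Theorem~4.1]{MarklofWelsh2021a}", the plan is not to reprove automorphy from scratch but to package the cited result correctly and check that our conventions line up. First I would recall precisely what \cite[Theorem~4.1]{MarklofWelsh2021a} gives: it is an automorphy statement for the \emph{scalar} theta function $\Theta_f(h,g) = \sum_{m\in\Z^d}[W(h)R_d(g)f](m)$ associated to the Schr\"odinger--Shale--Weil data, asserting that $\Theta_f(\bar\gamma(h,g)) = \chi(\bar\gamma)\,\Theta_f(h,g)$ for $\bar\gamma$ in the analogous lattice $\bar\Gamma$, where $\chi$ is a (possibly nontrivial) multiplier coming from the projectivity of $R_1$ and the metaplectic cocycle. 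The inclusion of the twist $h_\gamma$ in the definition \eqref{eq:Gammabar}, together with the congruence \eqref{eq:hgammamult} which is exactly what makes $\bar\Gamma$ a genuine subgroup, is precisely the device that absorbs the Heisenberg part of the cocycle; what potentially remains is an eighth-root-of-unity metaplectic multiplier $\chi$ on the $\Gamma^d$ coordinate.

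The key step is then to observe that passing from $\Theta_f$ to $\widetilde\Theta_F$ kills this multiplier. Since $R_{d,d}(g) = R_d(g)\otimes[C R_d(g) C]$ is an \emph{honest} representation of $G^d$ (as emphasized right after \eqref{eq:thetadefdd}, and visible in the special case $F = f\otimes\bar f$, $\widetilde\Theta_F = |\Theta_f|^2$), the projective multiplier of $R_d$ cancels against its conjugate. Concretely, for $F = f_1\otimes\overline{f_2}$ one has $\widetilde\Theta_F(\bar\gamma(h,g)) = \Theta_{f_1}(\bar\gamma(h,g))\,\overline{\Theta_{f_2}(\bar\gamma(h,g))} = \chi(\bar\gamma)\overline{\chi(\bar\gamma)}\,\Theta_{f_1}(h,g)\overline{\Theta_{f_2}(h,g)} = \widetilde\Theta_F(h,g)$ because $|\chi| = 1$. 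Finite linear combinations of such product functions are dense in $\cS(\R^{2d})$ (indeed span a dense subspace, and $\widetilde\Theta_F$ depends continuously on $F$ in the Schwartz topology for fixed $(h,g)$), so the identity \eqref{eq:thetaautomorphy} extends to all $F\in\cS(\R^{2d})$. Alternatively, and perhaps more cleanly, one avoids density by noting that the metaplectic cocycle of $R_{d,d}$ is identically trivial, so the cited automorphy argument applies verbatim to $\widetilde\Theta_F$ with $\chi\equiv 1$; I would present this version as the main line and mention the product-function cancellation as a sanity check.

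The only real bookkeeping obstacle — and the step I expect to spend the most care on — is matching conventions between \cite{MarklofWelsh2021a} and the present normalizations: the sign in the Heisenberg multiplication \eqref{eq:Hmult}, the factor $\e(-z+\tfrac12 x\transp y)$ in the Schr\"odinger representation \eqref{eq:Schrodingerdef}, the embedding \eqref{eq:gembed} of $G^d$ into $\Sp(d,\R)$, and above all the precise form of $h_\gamma = ((\tfrac12 c_jd_j)_j,(\tfrac12 a_jb_j)_j,0)$ and the verification of \eqref{eq:hgammamult} modulo $\Z^d\times\Z^d\times\R$. I would verify \eqref{eq:hgammamult} directly from \eqref{eq:Hmult}, \eqref{eq:gautomorphism} and \eqref{eq:Jacobimult}: writing out $h_{\gamma_1}(h_{\gamma_2})^{\gamma_1^{-1}}$ and $h_{\gamma_1\gamma_2}$ for $2\times 2$ blocks, one finds the two differ by an element of $\Z^d\times\Z^d\times\R$ because the entries of $\gamma_1,\gamma_2$ are integers, so the half-integer "diagonal" terms $\tfrac12 a b$, $\tfrac12 c d$ only matter modulo $\tfrac12\Z$, and the discrepancy is integral. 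Once the cited theorem is invoked with the matching dictionary and the triviality of the $R_{d,d}$-cocycle is recorded, \eqref{eq:thetaautomorphy} follows with no further work; this is why the excerpt can legitimately say it follows "immediately."
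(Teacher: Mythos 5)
Your proposal is correct and matches the paper's approach: the paper itself offers no proof beyond citing \cite[Theorem~4.1]{MarklofWelsh2021a}, and your unpacking — that any metaplectic multiplier for $\Theta_f$ cancels against its conjugate because $R_{d,d}(g)=R_d(g)\otimes[CR_d(g)C]$ is a genuine (not merely projective) representation — is precisely the point the paper signals with its remark that ``$R_{d,d}$ is (unlike $R_1$) an actual representation.'' Your two alternative ways of phrasing this (product-function cancellation plus density, versus triviality of the $R_{d,d}$-cocycle) are both sound, and the bookkeeping you flag, notably verifying \eqref{eq:hgammamult}, is indeed the only place where care with conventions is needed.
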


Note that in particular 
\begin{equation}
\label{thetainversion}
\widetilde\Theta_F\big((h_\gamma, \gamma)(1, g)\big) =  \widetilde\Theta_F(1, g)
\end{equation}
for all $\gamma\in\Gamma^d$.
This implies that $\widetilde\Theta_F(1, \,\cdot\,) $ defines a function on $\Gamma_{2,2}^d\backslash G^d$ with the finite index subgroup $\Gamma_{2,2}^d<\Gamma^d$ such that $c_jd_j \equiv a_jb_j \equiv 0 \bmod 2$. 

\subsection{Margulis functions}
\label{sec:Margulisfunctions}

We let $\cD$ be the standard fundamental domain for $\Gamma \backslash G$, which we take as $\cD = \cD^+ \cup \cD^-$ with
\begin{equation}
  \label{eq:standardfunddom}
  \begin{split}
    \cD^+ = & \left\{
    \begin{pmatrix}
      1 & x \\
      0 & 1
    \end{pmatrix}
    \begin{pmatrix}
      y^{\frac{1}{2}} & 0 \\
      0 & y^{-\frac{1}{2}}
    \end{pmatrix}
    k(\theta)    : 
    0 \leq x \leq \tfrac{1}{2},\ x^2 + y^2 \geq 1,\ \theta \in [0, \pi)\right\}. \\
    \cD^- = & \left\{
    \begin{pmatrix}
      1 & x \\
      0 & 1
    \end{pmatrix}
    \begin{pmatrix}
      y^{\frac{1}{2}} & 0 \\
      0 & y^{-\frac{1}{2}}
    \end{pmatrix}
    k(\theta)          : 
   -\tfrac{1}{2} < x < 0,\ x^2 + y^2 > 1,\ \theta \in [0, \pi)\right\},
  \end{split} 
\end{equation}
where
\begin{equation}
k(\theta) = \begin{pmatrix}
      \cos \theta & -\sin \theta \\
      \sin \theta & \cos \theta
    \end{pmatrix} .
\end{equation}

For $g = \left(
\begin{pmatrix}
  a_j & b_j \\
  c_j & d_j
\end{pmatrix}
\right)_{\!\!1\leq j\leq d} \in G^d$, we define the height functions
\begin{equation}
  \label{eq:rholdef}
  \rho_l(g) = \min_{\substack{J \subset \{1, \dots, d\} \\ |J| = l}} \prod_{j \in J} |(c_j, d_j)|,
\end{equation}
and
\begin{equation}
  \label{eq:alphaldef}
  \alpha_l(g) = \max_{\gamma \in \Gamma^d} \rho_l(\gamma g)^{-1}.
\end{equation}
We also use $\rho(g) := \rho_1(g)$, $\alpha(g) := \alpha_1(g)$ when $g \in G$.

For $g = (g_j)_j\in G^d$, we let $\gamma(g) = (\gamma_j(g))_j$ be so that $\gamma_j(g) g_j \in \cD$. We drop the argument and simply write $\gamma$ and $\gamma_j$ in the following.
We record the following elementary properties, which follow directly from the choice of fundemental domain $\cD$.
\begin{lem}
  \label{lem:shortestvector}
  Let $g = (g_j)_j$ and $\gamma = (\gamma_j)_j$.
  Then
  \begin{enumerate}[{\rm (i)}]
  \item $\alpha_l(g) = \min_{\substack{J \subset \{1, \dots, d\} \\ |J| = l}} \prod_{j \in J} \rho(\gamma_j g_j)$,
    
  \item $\rho(\gamma_j g_j) \ll 1$, and
    
  \item if $\rho(g_j) \leq 1$, then $\gamma_j$ has the form $
  \pm \begin{pmatrix}
    1 & * \\
    0 & 1
  \end{pmatrix}
  $.
  \end{enumerate}
\end{lem}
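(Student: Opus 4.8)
The plan is to reduce all three assertions to the classical reduction theory of $\Gamma=\SL(2,\Z)$ acting on the upper half-plane $\mathbb{H}$, which is exactly what the choice of $\cD$ encodes. Writing $g\cdot z$ for the M\"obius action of $g=\mat{a&b\\c&d}\in G$ on $\mathbb{H}$, one has $\im(g\cdot i)=(c^{2}+d^{2})^{-1}=\rho(g)^{-2}$; thus $\rho$ is right-$\SO(2)$-invariant and equals the reciprocal square root of $\im(g\cdot i)$, so $\rho(g)$ is small precisely when $g$ is deep in the cusp (equivalently, $\min_{\gamma\in\Gamma}\rho(\gamma g)$ is the length of a shortest nonzero vector of the lattice generated by the rows of $g$). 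The crux is the identity
\[
\min_{\gamma\in\Gamma}\rho(\gamma g)=\rho\bigl(\gamma(g)\,g\bigr),\qquad g\in G,
\]
i.e.\ that the reduction $\gamma(g)$ of $g$ into $\cD$ also minimizes $\rho$. I would prove this by the standard Siegel argument: if $\gamma(g)g\in\cD$ then its image $z_{0}=x+iy\in\mathbb{H}$ satisfies $|z_{0}|\ge1$ and $|x|\le\tfrac12$, and for any $\gamma'=\mat{p&q\\r&s}\in\Gamma$ one has $|rz_{0}+s|^{2}=r^{2}|z_{0}|^{2}+2rsx+s^{2}\ge1$, which is immediate when $r=0$ or $s=0$ and, for $rs\ne0$, follows from $r^{2}|z_{0}|^{2}+2rsx+s^{2}\ge r^{2}-|rs|+s^{2}=(|r|-|s|)^{2}+|rs|\ge1$ using $|z_{0}|\ge1$ and $|x|\le\tfrac12$. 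Hence $\im(\gamma'\gamma(g)g\cdot i)=\im(z_{0})\,|rz_{0}+s|^{-2}\le\im(z_{0})$, that is $\rho(\gamma'\gamma(g)g)\ge\rho(\gamma(g)g)$ for every $\gamma'\in\Gamma$.

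Granting this, part (i) is bookkeeping. Write $\gamma_{j}=\gamma_{j}(g)$ for the reduction of the $j$-th coordinate and let $\delta=(\delta_{j})_{j}$ range over $\Gamma^{d}$. From the definitions of $\rho_{l}$ and of $\rho=\rho_{1}$ we have $\rho_{l}(\delta g)=\min_{|J|=l}\prod_{j\in J}\rho(\delta_{j}g_{j})$, and since different factors involve different components $\delta_{j}$ I may minimize them independently:
\[
\min_{\delta\in\Gamma^{d}}\rho_{l}(\delta g)=\min_{|J|=l}\ \prod_{j\in J}\ \min_{\delta_{j}\in\Gamma}\rho(\delta_{j}g_{j})=\min_{|J|=l}\ \prod_{j\in J}\rho(\gamma_{j}g_{j}),
\]
the last step being the crux identity applied coordinatewise. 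Since by definition $\alpha_{l}(g)=\max_{\delta\in\Gamma^{d}}\rho_{l}(\delta g)^{-1}=\bigl(\min_{\delta\in\Gamma^{d}}\rho_{l}(\delta g)\bigr)^{-1}$, this gives (i). For (ii): $\gamma_{j}g_{j}\in\cD$ forces its $\mathbb{H}$-coordinate $x+iy$ to satisfy $x^{2}+y^{2}\ge1$ and $|x|\le\tfrac12$, hence $y\ge\tfrac{\sqrt3}{2}$ and $\rho(\gamma_{j}g_{j})=y^{-1/2}\le(4/3)^{1/4}\ll1$. For (iii): $\rho(g_{j})\le1$ means $\im(g_{j}\cdot i)\ge1$; translating the real part of $g_{j}\cdot i=x+iy$ by an integer $n$ into the relevant unit interval produces a point $x'+iy$ with $(x')^{2}+y^{2}\ge y^{2}\ge1$, already in the standard domain, so no further $\Gamma$-reduction is needed and $\gamma_{j}$ acts on $\mathbb{H}$ as $z\mapsto z+n$; the only elements of $\Gamma$ acting this way are $\pm\mat{1&n\\0&1}$.

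I do not expect a real obstacle: one-variable reduction theory does all the work, and the minimization interchange in (i) is purely formal. The two points that warrant a little care are, first, that the minimum in the crux identity is genuinely \emph{attained} at the $\cD$-representative, which is why the Siegel inequality must be asserted uniformly over all $\gamma'\in\Gamma$ rather than just asymptotically; and, second, the bookkeeping of the conventions that define $\cD^{+}$ versus $\cD^{-}$ together with the normalization $\theta\in[0,\pi)$, which is precisely what produces the sign ambiguity $\pm$ in (iii).
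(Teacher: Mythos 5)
Your proof is correct and takes exactly the route the authors allude to when they write that the lemma "follow[s] directly from the choice of fundamental domain $\cD$": the paper gives no explicit argument, and your classical Siegel reduction on $\mathbb{H}$ (identifying $\rho(g)=\im(g\cdot i)^{-1/2}$ and verifying $|rz_0+s|\ge 1$ for reduced $z_0$) is the standard way to make that remark precise. One small observation worth flagging: your computation actually yields $\alpha_l(g)=\bigl(\min_{|J|=l}\prod_{j\in J}\rho(\gamma_j g_j)\bigr)^{-1}$, which differs from the displayed statement of (i) by an inverse; since $\alpha_l$ must be large in the cusp while $\rho(\gamma_j g_j)\ll 1$ by (ii), the inverse is certainly intended and (i) as printed contains a typo, so your derivation gives the correct formula.
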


\begin{lem}
  \label{lem:basicfundamentaldomain}
  Set $g_0 n(\xi) a(t) = (g_j)_j$ and $\gamma( g_0 n(\xi) a(t)) = \left(
  \begin{pmatrix}
    * & * \\
    c_j & d_j
  \end{pmatrix}
 \right)_{\!\!1\leq j\leq d}$.
  \begin{enumerate}[{\rm (i)}]    
  \item
    If $\rho(g_j) \leq \delta_j$, then we have
    \begin{equation}
      \label{eq:cjdjbounds}
      |c_j| \ll \delta_j \e^t\ \mathrm{and\ } |c_j x_j \xi + d_j | \ll \delta_j \e^{-t}.
    \end{equation}
    % In particular, \eqref{eq:cjdjbounds} always holds for some $\delta_j \ll 1$.

  \item There exists $t_0 > 0$ depending only on $x_1, x_2$ so that if $t \geq t_0$, then $c_j \neq 0$.
    
  \item There exists a constant $\epsilon_0 > 0$ so that if $t \geq t_0$ and $|\xi| \geq \epsilon_0 \e^{-t}$, then also $d_1, d_2 \neq 0$. 
  \end{enumerate}
\end{lem}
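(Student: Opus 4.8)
The plan is to reduce all three statements to explicit $2\times 2$ computations, using the product structure of $g_0 n(\xi) a(t)$ together with the inequalities defining the fundamental domain $\cD$. First I would write out the $j$-th component,
\begin{equation*}
g_j = \begin{pmatrix} x_j^{1/2}\e^{-t} & x_j^{1/2}\xi\,\e^{t}\\ 0 & x_j^{-1/2}\e^{t}\end{pmatrix},
\end{equation*}
and record that for $\gamma_j = \smallmat{* & *\\ c_j & d_j}$ the bottom row of the reduced element $\gamma_j g_j$ is
\begin{equation*}
(c_j, d_j)\,g_j = \left( c_j x_j^{1/2}\e^{-t},\ (c_j x_j\xi + d_j)\,x_j^{-1/2}\e^{t}\right),
\end{equation*}
whose Euclidean length is exactly $\rho(\gamma_j g_j)$. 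Since $\gamma_j g_j\in\cD$ this length is $\ll 1$ by Lemma~\ref{lem:shortestvector}(ii), and under the hypothesis of (i) it is at most $\delta_j$; reading off the two coordinates and absorbing the fixed constants $x_j$ then gives $|c_j|\ll\delta_j\e^{t}$ and $|c_j x_j\xi+d_j|\ll\delta_j\e^{-t}$, which is (i). Applying the same bound with $\delta_j\asymp 1$ records the unconditional estimates used later.

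For (ii) I would argue by contradiction: if $c_j=0$, then $\gamma_j\in\SL(2,\Z)$ forces $\gamma_j=\pm\smallmat{1 & *\\ 0 & 1}$, so $\gamma_j g_j$ is upper triangular with the same diagonal as $g_j$ and therefore lies at Iwasawa height $y=x_j\e^{-2t}$. But every point of $\cD$ has height at least $\tfrac{\sqrt3}{2}$ (the infimum of $y$ subject to $|x|\le\tfrac12$ and $x^2+y^2\ge 1$), so $x_j\e^{-2t}\ge\tfrac{\sqrt3}{2}$, which fails once $t>\tfrac12\log(\tfrac{2x_j}{\sqrt3})$; taking $t_0$ to be the largest such threshold over the relevant indices $j$ yields a $t_0$ depending only on those $x_j$. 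Part (iii) is the analogous computation at the other entry of $\gamma_j$: if $d_j=0$, then $\det\gamma_j=1$ forces $c_j=\pm1$, and the bottom-row formula above shows $\rho(\gamma_j g_j)=x_j^{1/2}\sqrt{\e^{-2t}+\xi^2\e^{2t}}$; as this is $\ll 1$ we obtain in particular $x_j^{1/2}|\xi|\e^{t}\ll 1$, so choosing $\epsilon_0$ larger than the resulting implied constants (for $j=1,2$) forces $d_j\ne 0$ whenever $|\xi|\ge\epsilon_0\e^{-t}$.

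I do not anticipate a genuine obstacle here; the lemma is in essence bookkeeping about the cusp of $\Gamma\backslash G$ made explicit via $\cD$. The points that need a little care are: translating ``$\gamma_j g_j\in\cD$'' into the two facts actually used — that the bottom row of any element of $\cD$ has length at most $(2/\sqrt3)^{1/2}$, and that when $c_j=0$ the reduced element has height exactly $x_j\e^{-2t}$ — keeping in mind that $\cD$ identifies $g$ with $-g$; and ensuring the thresholds $t_0$ and $\epsilon_0$ are taken uniformly over the finitely many indices $j$ one needs. It is also worth being explicit that in (i) the quantity under control is the bottom row of the \emph{reduced} element $\gamma_j g_j$, since that is what membership in $\cD$ constrains.
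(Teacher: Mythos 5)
Your proof is correct, and since the paper states this lemma without proof (after remarking that these properties ``follow directly from the choice of fundamental domain $\cD$''), there is no alternative argument to compare against. Your three computations are exactly the intended ones: for (i), the bottom row of the reduced element $\gamma_j g_j$ is $\big(c_j x_j^{1/2}\e^{-t},\,(c_j x_j\xi+d_j)x_j^{-1/2}\e^t\big)$ and both entries are bounded by $\rho(\gamma_j g_j)\leq\delta_j$ (note the hypothesis ``$\rho(g_j)\leq\delta_j$'' is, as you correctly interpret, shorthand for $\rho(\gamma_j g_j)\leq\delta_j$, i.e.\ $\alpha(g_j)\geq\delta_j^{-1}$); for (ii), $c_j=0$ forces $\gamma_j g_j$ to be upper-triangular with Iwasawa height $x_j\e^{-2t}$, incompatible with the floor $y\geq\sqrt{3}/2$ of $\cD$ once $t$ exceeds a threshold depending only on $x_j$; and for (iii), $d_j=0$ forces $|c_j|=1$, whence $\rho(\gamma_j g_j)\geq x_j^{1/2}|\xi|\e^t$, which together with $\rho(\gamma_j g_j)\ll 1$ contradicts $|\xi|\geq\epsilon_0\e^{-t}$ for $\epsilon_0$ large. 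This is a complete and appropriately elementary verification.
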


The functions $\alpha_l$ play an important role in the behavior of the theta functions.
The following is a consequence of estimates for the Shale-Weil representation combined with elementary estimates for sums over lattices, see \cite[Theorem~4.4]{MarklofWelsh2021a} and \cite{Marklof2003}.

For $f\in\cS(\R^d)$, we write
  \begin{equation}
    \label{eq:ftheta}
    f_\theta = R_d\left(\left(
    \begin{pmatrix}
      \cos \theta_j & -\sin \theta_j \\
      \sin \theta_j & \cos \theta_j
    \end{pmatrix}
   \right)_{\!\!1\leq j\leq d}\right)f ,
  \end{equation}
and similarly for $F\in\cS(\R^{2d})$,
  \begin{equation}
    \label{eq:fthetadd}
    F_\theta = R_{d,d}\left(\left(
    \begin{pmatrix}
      \cos \theta_j & -\sin \theta_j \\
      \sin \theta_j & \cos \theta_j
    \end{pmatrix}
   \right)_{\!\!1\leq j\leq d}\right) F.
  \end{equation}

\begin{thm}
  \label{thm:thetaasymptotic}
  Let $F\in\cS(\R^{2d})$ and $A > 0$. Then for $h=(0,y,z)\in H_d$ and
  $$g
  = (
  \begin{pmatrix}
    1 & u_j \\
    0 & 1
  \end{pmatrix}
  \begin{pmatrix}
    v_j^{\frac{1}{2}} & 0 \\
    0 & v_j^{-\frac{1}{2}}
  \end{pmatrix}
  \begin{pmatrix}
    \cos \theta_j & -\sin \theta_j \\
    \sin \theta_j & \cos \theta_j
  \end{pmatrix}
  \in\cD,$$ we have
  \begin{equation}
    \label{eq:thetaasymp}
    \widetilde\Theta_F(h, g) = (v_1 \cdots v_d)^{\frac{1}{2}} F_\theta(0) + O_{F,A}\big((\min v_j)^{-A}\big) 
  \end{equation}
  and, for all $h\in H_d$, $g\in G^g$,
 \begin{equation}
    \label{eq:thetaasymp123}
    |\widetilde\Theta_F(h, g)|  \ll_F \alpha_d(g).
  \end{equation}
\end{thm}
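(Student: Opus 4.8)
The plan is to reduce everything to the case $d=1$ via the tensor-product structure $R_{d,d}(g) = R_d(g) \otimes [CR_d(g)C]$ and $R_d(g) = R_1(g_1)\otimes\cdots\otimes R_1(g_d)$, and then invoke the one-variable estimates from \cite[Theorem~4.4]{MarklofWelsh2021a} and \cite{Marklof2003}. For the asymptotic formula \eqref{eq:thetaasymp}, I would first use the definition \eqref{eq:thetadefdd} together with \eqref{eq:Schrodingerdef} to write $\widetilde\Theta_F(h,g)$ as a lattice sum over $\Z^{2d}$ of $[R_{d,d}(g)F](m)$ twisted by the Heisenberg phase $\e(-z+\tfrac12 x\transp{y})$; since $h=(0,y,z)$ has $x=0$, only an overall unimodular constant and a translation in the $y$-variables appear, which do not affect the size estimates. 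The point is that $g\in\cD$ is in the Iwasawa form $n(u_j)a(\tfrac12\log v_j)k(\theta_j)$, and by Proposition \ref{prop:oscillator} the operator $R_1(n(u_j)a(\tfrac12\log v_j))$ acts on $f_\theta$ by $f\mapsto v_j^{1/4}\e(\tfrac12 u_j v_j^{-1} x^2)f(v_j^{-1/2}x)$ — wait, more precisely by \eqref{eq:poscillator1} with $a = v_j^{-1/2}$, $b = u_j v_j^{-1/2}$, giving the prefactor $v_j^{-1/4}$ on a function rescaled by $v_j^{-1/2}$; combined with the conjugate copy in $R_{d,d}$ and the explicit evaluation \eqref{eq:thetaexdd} (which already records the case $\theta=0$), one sees the lattice sum becomes $(v_1\cdots v_d)^{1/2}\sum_{(m,n)\in\Z^{2d}} F_\theta\big((m_j v_j^{1/2})_j,(n_j v_j^{1/2})_j\big)$ up to phases. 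The $m=n=0$ term gives the main term $(v_1\cdots v_d)^{1/2}F_\theta(0)$, and all other terms have at least one coordinate of size $\gg (\min v_j)^{1/2}$, so rapid decay of the Schwartz function $F_\theta$ (uniformly over the compact $\theta$-range, since $\cD$ constrains $\theta\in[0,\pi)$ and the Shale-Weil action by $k(\theta)$ is continuous) bounds their total contribution by $O_{F,A}((\min v_j)^{-A})$ after summing a convergent series; this is the same computation as in \cite{Marklof2003}, done coordinatewise.

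For the uniform bound \eqref{eq:thetaasymp123}, the strategy is: reduce to $g\in\cD^d$ using the automorphy \eqref{thetainversion}, i.e.\ choose $\gamma = \gamma(g)\in\Gamma^d$ with $\gamma g\in\cD^d$, so $\widetilde\Theta_F(1,g) = \widetilde\Theta_F(1,\gamma g)$ — here I would first need to reduce the general $h$ to $h$ of the form $(0,y,z)$ using the $\Z^d\times\Z^d$-periodicity built into $\bar\Gamma$ from Theorem \ref{thm:thetaautomorphy}, or simply note that translating the $x$-variable of $h$ by integers and absorbing the rest only permutes the lattice sum and multiplies by a phase, so $|\widetilde\Theta_F(h,g)|$ depends on $h$ only through bounded data. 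Then for $\gamma g\in\cD^d$ with Iwasawa heights $v_j$, the formula just derived gives $|\widetilde\Theta_F(1,\gamma g)| \ll_F (v_1\cdots v_d)^{1/2} + 1 \ll (v_1\cdots v_d)^{1/2}$ when the $v_j$ are bounded below (say $\geq 1$, which holds in $\cD$ since $x^2+y^2\geq 1$ forces $y$ not too small relative to $x$). Finally, by Lemma \ref{lem:shortestvector}(i) and the definition \eqref{eq:alphaldef}, $\alpha_d(g) = \prod_{j} \rho(\gamma_j g_j) \asymp \prod_j v_j^{1/2}$ up to the bounded factors coming from $k(\theta_j)$ and $n(u_j)$ (the shortest vector of the lattice $\Z^2 \gamma_j g_j$ has length $\asymp v_j^{-1/2}$), so $(v_1\cdots v_d)^{1/2}\asymp \alpha_d(g)$, which is exactly the claimed bound. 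When $F = f\otimes\bar f$ this is literally $|\Theta_f(h,g)|^2 \ll \alpha_d(g)$, recovering the classical estimate; the general-$F$ case works identically because the tensor structure of $R_{d,d}$ and the Schwartz decay are all that was used.

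The main obstacle is making the ``up to bounded phases and factors'' bookkeeping precise and genuinely uniform — in particular, verifying that the constants implied in the $O_{F,A}$ and $\ll_F$ are uniform over the full $\theta$-range $[0,\pi)^d$ and do not degenerate, and correctly tracking how the Heisenberg element $h$ (now allowed arbitrary, not just of the special form) enters. This is handled by the cited results \cite[Theorem~4.4]{MarklofWelsh2021a} and \cite{Marklof2003}, where the Shale-Weil operator norms and the oscillatory-integral bounds of Proposition \ref{prop:oscillator} are controlled uniformly on $\SO(2)$; the key quantitative input is that $\|R_1(k(\theta))f\|$ in the relevant Schwartz seminorms is bounded uniformly in $\theta$, which follows from the explicit integral kernel in \eqref{eq:Fouriertransform} (for $\theta\notin\{0,\pi/2\}$, where $c=\sin\theta\neq 0$) together with continuity at the two exceptional angles. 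Granting those uniform one-variable estimates, the multivariable statement is a routine tensor-product and triangle-inequality argument.
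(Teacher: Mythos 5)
Your proposal is correct and follows essentially the route the paper takes: the paper simply defers to \cite[Theorem~4.4]{MarklofWelsh2021a} and \cite{Marklof2003}, and your sketch reconstructs the standard argument behind those references — tensor-product reduction to the one-variable Shale--Weil operators, Iwasawa decomposition via Proposition~\ref{prop:oscillator}, Schwartz decay to bound the nonzero lattice terms, and automorphy plus the identification $\alpha_d(g)=(v_1\cdots v_d)^{1/2}$ for $g\in\cD^d$ to get the uniform bound.

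Two small sign/inversion slips do not affect the logic but should be fixed. In your application of Proposition~\ref{prop:oscillator}, the upper-triangular Iwasawa factor is $\smallmat{v_j^{1/2} & 0 \\ 0 & v_j^{-1/2}}$ conjugated by $n(u_j)$, so $a=v_j^{1/2}$ and $b=u_jv_j^{-1/2}$, not $a=v_j^{-1/2}$; the prefactor is therefore $v_j^{1/4}$ and the rescaling is by $v_j^{1/2}$, which is what produces the $(v_1\cdots v_d)^{1/2}\sum F_\theta\big((m_jv_j^{1/2})_j,(n_jv_j^{1/2})_j\big)$ you subsequently (and correctly) write and which matches \eqref{eq:thetaexdd}. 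Similarly, for $\gamma_jg_j\in\cD$ one has $\rho(\gamma_jg_j)=v_j^{-1/2}$, so $\alpha_d(g)=\rho_d(\gamma g)^{-1}=\prod_jv_j^{1/2}$; the identity you quote from Lemma~\ref{lem:shortestvector}(i), $\alpha_d(g)=\prod_j\rho(\gamma_jg_j)$, is missing an inversion (as printed in the paper), though your final conclusion $\alpha_d(g)\asymp(v_1\cdots v_d)^{1/2}$ is the right one.
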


In what follows we use the following estimate on the change of $\rho$ under translation by $n(\xi)$ and $a(s)$.
\begin{lem}
  \label{lem:rhotransbounds}
  Suppose that $|\xi|\leq 1$ and $s > 0$.
  Then
  \begin{equation}
    \label{eq:rhotranslationbounds}
    \tfrac{1}{2 } \e^{-s}  \rho(g) \leq \rho(g n(\xi)a(s)) \leq 2 \e^s \rho(g). 
  \end{equation}
\end{lem}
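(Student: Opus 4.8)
The plan is to prove Lemma \ref{lem:rhotransbounds} by a direct computation, since $\rho = \rho_1$ on $G$ is simply the Euclidean norm of the bottom row of a matrix in $\SL(2,\R)$. First I would write $g = \smallmat{a & b \\ c & d}$, so that $\rho(g) = |(c,d)|$, and compute the bottom row of $g n(\xi) a(s)$. We have $n(\xi) a(s) = \smallmat{\e^{-s} & \xi \e^s \\ 0 & \e^s}$, hence the bottom row of $g n(\xi) a(s)$ is $(c \e^{-s},\, c\xi\e^s + d\e^s) = (c\e^{-s},\, (c\xi + d)\e^s)$. Thus $\rho(gn(\xi)a(s))^2 = c^2 \e^{-2s} + (c\xi+d)^2 \e^{2s}$, and I want to compare this with $\rho(g)^2 = c^2 + d^2$, using $|\xi| \leq 1$ and $s > 0$.

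For the upper bound, I would estimate $c^2\e^{-2s} \leq c^2 \leq c^2 \e^{2s}$ and $(c\xi + d)^2 \e^{2s} \leq 2(c^2\xi^2 + d^2)\e^{2s} \leq 2(c^2 + d^2)\e^{2s}$ using $|\xi| \leq 1$; adding gives $\rho(gn(\xi)a(s))^2 \leq (c^2 + 2c^2 + 2d^2)\e^{2s} \leq 3(c^2+d^2)\e^{2s}$, which already yields $\rho(gn(\xi)a(s)) \leq \sqrt{3}\,\e^s \rho(g) \leq 2\e^s\rho(g)$. For the lower bound, I would apply the same argument to the inverse transformation: since $n(\xi)a(s)$ has inverse $a(-s)n(-\xi \e^{-2s}) \cdot (\text{something})$ — more cleanly, note $(gn(\xi)a(s)) n(-\xi\e^{-2s}) a(-s) = g$ because $n(\xi)a(s)n(-\xi\e^{-2s})a(-s) = \Id$ (direct check: $a(s)n(-\xi\e^{-2s})a(-s) = n(-\xi)$, so $n(\xi)n(-\xi) = \Id$). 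Here $|-\xi\e^{-2s}| \leq |\xi| \leq 1$ since $s > 0$, and $-s < 0$, so I cannot directly reuse the upper bound with its hypothesis $s > 0$; instead I would reuse the explicit formula. Setting $g' = gn(\xi)a(s)$ with bottom row $(c', d')$, we get $\rho(g)^2 = (c')^2\e^{2s} + (-c'\xi\e^{-2s} + d'\e^{-s}\cdot\e^{s})^2\e^{-2s}$ — this is getting delicate, so cleanly I would instead directly lower-bound $c^2\e^{-2s} + (c\xi+d)^2\e^{2s}$. Write $d = (c\xi + d) - c\xi$, so $d^2 \leq 2(c\xi+d)^2 + 2c^2\xi^2 \leq 2(c\xi+d)^2 + 2c^2$ (using $|\xi|\leq 1$); hence $c^2 + d^2 \leq 3c^2 + 2(c\xi+d)^2 \leq 3\e^{2s}\big(c^2\e^{-2s} + (c\xi+d)^2\e^{2s}\big) = 3\e^{2s}\rho(gn(\xi)a(s))^2$, using $\e^{-2s}\leq 1 \leq \e^{2s}$. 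This gives $\rho(g) \leq \sqrt 3\,\e^s \rho(gn(\xi)a(s))$, i.e.\ $\rho(gn(\xi)a(s)) \geq \tfrac{1}{\sqrt 3}\e^{-s}\rho(g) \geq \tfrac12 \e^{-s}\rho(g)$.

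There is no real obstacle here: the only point requiring a modicum of care is keeping track of which direction the hypothesis $s>0$ and $|\xi|\le 1$ are used, and in fact the argument above shows both bounds follow purely algebraically from $|\xi|\le 1$ together with $\e^{-2s}\le 1\le \e^{2s}$, with the constant $\sqrt3$ comfortably inside the claimed constant $2$. So the entire proof is two applications of the elementary inequality $(p+q)^2 \leq 2p^2 + 2q^2$ to the explicit bottom-row computation. The only conceptual input is recognizing that $\rho$ on $G$ is the norm of the bottom row and that it suffices to track that row under right multiplication by the explicit matrix $n(\xi)a(s) = \smallmat{\e^{-s} & \xi\e^s \\ 0 & \e^s}$.
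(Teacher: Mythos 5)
Your proof is correct, and it takes a somewhat different route from the paper's. The paper proves the lower bound by a case split (whether $|c|\ge\tfrac12|d|$ or $|c|\le\tfrac12|d|$) and then asserts that the upper bound follows by symmetry, "applying the lower bound with $g$ replaced by $ga(-s)n(-\xi)$." You noticed, correctly, that this symmetry is a little delicate to apply literally (since $a(-s)n(-\xi)=n(-\xi\e^{2s})a(-s)$ does not satisfy the hypotheses $|\xi|\le1$, $s>0$ of the lemma with the roles reversed), and instead you give a uniform direct argument for both inequalities via the elementary estimate $(p+q)^2\le 2p^2+2q^2$ applied to the explicit bottom row $(c\e^{-s},(c\xi+d)\e^s)$. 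This buys you a self-contained computation with explicit constants ($\sqrt3\le2$) and avoids both the case split and the symmetry trick; the paper's case-split formulation is marginally shorter if one interprets $\rho$ as a sup-norm, but your version is cleaner and arguably more rigorous in filling in the upper-bound step. Both are at the level of routine $2\times2$ matrix algebra, so there is no real difference in depth.
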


\begin{proof}
  Let $g =
  \begin{pmatrix}
    * & * \\
    c & d
  \end{pmatrix}
  ,$ so $\rho(gn(\xi)a(s)) = |
  \begin{pmatrix}
    c\e^{-s} & (c\xi + d) \e^ss
  \end{pmatrix}
  |$.
  We consider two cases.

  First, if $|c| \geq \tfrac{1}{2} |d|$, then we have
  \begin{equation}
    \label{eq:rholowerboundc}
    \rho(gn(\xi)a(s)) \geq |c| \e^{-s} \geq \tfrac{1}{2} \e^{-s} \rho(g). 
  \end{equation}
  On the other hand, if $|c| \leq \tfrac{1}{2} |d|$, then $|c\xi + d| \geq \tfrac{1}{2}|d|$ and
  \begin{equation}
    \label{eq:rholowerboundd}
    \rho(gn(\xi)a(s)) \geq \tfrac{1}{2} |d| \e^s \geq \tfrac{1}{2} \e^s\rho(g). 
  \end{equation}
  
  This establishes the lower bound in \eqref{eq:rhotranslationbounds}, and the upper bound follows by applying the lower bound with $g$ replaced with $ga(-s)n(-\xi)$. 
\end{proof}

\subsection{Diophantine property}
\label{sec:Diophantine}

\begin{lem}
  \label{lem:alpha3supbound}
  Assume that $g_0 = \left( \begin{pmatrix}   x_j^{\frac{1}{2}} & 0 \\
    0 & x_j^{-\frac{1}{2}}
  \end{pmatrix}
  \right)_{\!\! j=1,2,3}$ with $\tfrac{x_i}{x_j}$ $\kappa$-Diophantine for $i\neq j$.
  Then for any $\eta > 0$, if $\e^{-(1+\eta) t} < |\xi| \leq 10$, then $\alpha_3(g_0n(\xi) a(t)) \ll \e^{(\frac{3\kappa}{\kappa + 1} + 3\eta)t }$.
\end{lem}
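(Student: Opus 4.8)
The goal is to bound $\alpha_3(g_0 n(\xi) a(t))$ when $\e^{-(1+\eta)t} < |\xi| \leq 10$, using the $\kappa$-Diophantine hypothesis on the ratios $x_i/x_j$. The plan is to unpack the definition $\alpha_3(g) = \max_{\gamma \in \Gamma^3} \rho_3(\gamma g)^{-1}$, which by Lemma \ref{lem:shortestvector}(i) equals $\prod_{j=1}^3 \rho(\gamma_j g_j)$ where $\gamma_j g_j \in \cD$. Since $\rho(\gamma_j g_j) \ll 1$ always, the only way $\alpha_3$ can be large is if $\rho(g_j)$ is small for all three $j$ simultaneously; so I would assume $\rho(g_j) \leq \delta_j$ with the $\delta_j$ small, with $\alpha_3 \asymp \delta_1 \delta_2 \delta_3$ in the extremal configuration. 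The strategy is then to show that $\delta_1 \delta_2 \delta_3$ cannot be too small, i.e.\ that the three orbit points cannot all be simultaneously deep in the cusp, by playing the Diophantine condition off against the explicit form of $g_0 n(\xi) a(t)$.

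First I would apply Lemma \ref{lem:basicfundamentaldomain}(i): if $\rho(g_j) \leq \delta_j$, writing $\gamma_j g_j = \smallmat{ * & * \\ c_j & d_j}$, then $|c_j| \ll \delta_j \e^t$ and $|c_j x_j \xi + d_j| \ll \delta_j \e^{-t}$. The pair $(c_j, d_j)$ is a nonzero integer vector (a row of $\gamma_j \in \Gamma = \SL(2,\Z)$), and we may assume $t \geq t_0$ so that $c_j \neq 0$ by Lemma \ref{lem:basicfundamentaldomain}(ii), and (since $|\xi| \geq \e^{-(1+\eta)t}$, which exceeds $\epsilon_0 \e^{-t}$ for $t$ large) also $d_j \neq 0$ by part (iii). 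Now the key move: the quantity $|c_j x_j \xi + d_j| \ll \delta_j \e^{-t}$ says that $-d_j/c_j$ is a rational approximation to $x_j \xi$ with denominator $|c_j| \ll \delta_j \e^t$ and error $\ll \delta_j \e^{-t}/|c_j|$. Taking a ratio of two of these three near-relations eliminates $\xi$: from $c_j x_j \xi + d_j = O(\delta_j \e^{-t})$ and $c_k x_k \xi + d_k = O(\delta_k \e^{-t})$ one gets that $\frac{x_i}{x_j}$ is well-approximated by $\frac{-d_i/c_i}{-d_j/c_j} = \frac{d_i c_j}{d_j c_i}$, a rational with denominator $\ll |c_i c_j| \ll \delta_i \delta_j \e^{2t}$, and with error controlled by the two $\delta \e^{-t}$ bounds divided by the appropriate $|c \cdot \xi|$-type quantities.

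The Diophantine hypothesis then forces a lower bound: $|x_i/x_j - d_i c_j/(d_j c_i)| \geq C |c_j c_i|^{-\kappa - 1} \gg (\delta_i \delta_j \e^{2t})^{-\kappa - 1}$, while from the explicit error estimate the left side is $\ll (\text{something like } \delta_i \delta_j \e^{-2t}/|\xi|)$ after carefully tracking the sizes of $|d_j|$, $|c_j|$ (note $|d_j| \asymp |c_j x_j \xi|$ up to the small error, so $|d_j| \ll \delta_j \e^t |\xi|$ and also $|d_j| \gg$ something using $d_j$ integral nonzero). Combining the two gives an inequality of the shape $(\delta_i\delta_j \e^{2t})^{-\kappa-1} \ll \delta_i \delta_j \e^{-2t} |\xi|^{-1} \e^{-2\cdot(\text{correction})}$, which after using $|\xi| \leq 10$ and solving for $\delta_i \delta_j$ yields $\delta_i \delta_j \gg \e^{-\frac{2(\kappa + ?) }{\kappa + 1} t - \eta'}$; multiplying the three such inequalities (for the three pairs) and taking a square root recovers $\delta_1 \delta_2 \delta_3 \gg \e^{-(\frac{3\kappa}{\kappa+1} + 3\eta)t}$, i.e.\ $\alpha_3 \ll \e^{(\frac{3\kappa}{\kappa+1}+3\eta)t}$, as claimed. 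The $3\eta$ slack is exactly there to absorb (a) the use of $|\xi| > \e^{-(1+\eta)t}$ rather than $|\xi|$ of size $\asymp 1$, and (b) the implied constants, which become multiplicative $\e^{\eta t}$ factors once $t$ is large enough.

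The main obstacle I anticipate is bookkeeping the sizes of $c_j$ and $d_j$ precisely enough: one needs both upper and lower bounds, and the lower bound on $|c_j|$ (or the statement that $-d_j/c_j$ is genuinely a good approximation and not a degenerate one) requires using that $x_j\xi$ is irrational-ish, which itself comes from the Diophantine condition on $x_j/x_1$ combined with a lower bound on $|\xi|$ — there is a mild circularity to untangle. A clean way around this is to note that the bound we want to prove is trivial unless $\alpha_3$ is large (say $\geq \e^{\eta t}$), which forces all three $\delta_j \leq \e^{-\eta t/3}$, hence all $|c_j|$ and $|d_j|$ are in a controlled window, and in particular $|c_j c_i|$ is a genuine denominator for the rational approximation to $x_i/x_j$; the ratio-of-approximations step then never degenerates. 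The only other delicate point is making sure that after eliminating $\xi$ the resulting rational $d_i c_j/(d_j c_i)$ is in lowest terms or at least has denominator bounded by $|c_i c_j|$ — but any common factor only helps (it makes the denominator smaller, strengthening the Diophantine lower bound), so this is harmless.
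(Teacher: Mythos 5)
Your overall strategy is the right one and matches the paper's: bound $\alpha_3 \asymp (\delta_1\delta_2\delta_3)^{-1}$ by eliminating $\xi$ from the pair of near-relations $c_jx_j\xi+d_j = O(\delta_j\e^{-t})$ to produce a rational approximation of $x_i/x_j$, then invoke the Diophantine hypothesis. The core estimate $(\delta_i\delta_j\e^{2t})^{-\kappa} \ll |c_id_j\,\tfrac{x_i}{x_j} - c_jd_i| \ll \delta_i\delta_j$, giving $(\delta_i\delta_j)^{-1}\ll\e^{\frac{2\kappa}{\kappa+1}t}$, is exactly what the paper does, and your remark that common factors in $d_ic_j/(d_jc_i)$ only help the Diophantine lower bound is correct.

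However there is a genuine gap. You assert that $|\xi|\geq\e^{-(1+\eta)t}$ exceeds $\epsilon_0\e^{-t}$ for $t$ large, and use this with Lemma~\ref{lem:basicfundamentaldomain}(iii) to conclude $d_j\neq 0$. This is backwards: since $\eta>0$, we have $\e^{-(1+\eta)t}=\e^{-\eta t}\e^{-t}\ll\e^{-t}$, so the hypothesis $|\xi|>\e^{-(1+\eta)t}$ is \emph{weaker} than $|\xi|\geq\epsilon_0\e^{-t}$ and does \emph{not} let you invoke part~(iii). Consequently $d_j=0$ can occur in the range under consideration, and when it does your rational $d_ic_j/(d_jc_i)$ is undefined and the Diophantine step collapses. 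Your proposed ``clean way around'' does not repair this either: the assumption $\alpha_3\geq\e^{\eta t}$ only forces the \emph{product} $\delta_1\delta_2\delta_3$ to be small, not each individual $\delta_j$, so it does not put $|c_j|,|d_j|$ in a controlled window and in particular does not rule out $d_j=0$.

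The paper handles this by keeping a separate case: if $d_j=0$, then $c_jx_j\xi=O(\delta_j\e^{-t})$ together with $|c_j|\geq 1$ gives $\delta_j\gg|\xi|\e^t$, and here the lower bound $|\xi|>\e^{-(1+\eta)t}$ is used exactly once, to deduce $\delta_j^{-1}\ll\e^{\eta t}$. Combining this with the Diophantine pair bound for the remaining indices yields the stated exponent $\frac{3\kappa}{\kappa+1}+3\eta$. Adding this case analysis would complete your argument; as written, the step claiming $d_j\neq 0$ is false and the $d_j=0$ contribution is unaccounted for.
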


\begin{proof}
  Let $g_0 n(\xi) a(t) = (g_j)_j$, $\gamma(g_0 n(\xi) a(t)) = \left(  \begin{pmatrix}  * & * \\
    c_j & d_j
  \end{pmatrix}
\right)_{\!\!j=1,2,3}$.
We may assume using Lemma \ref{lem:basicfundamentaldomain} (ii) that  $t$ is large enough so that $c_j\neq 0$.
We set $\rho(g_j) = \delta_j$, $j = 1,2,3$ and we note that $|\xi| \ll 1$ implies that $|d_j| \ll |c_j| \ll \delta_j \e^t$.

Let us suppose that $d_j \neq 0$. 
  From Lemma~\ref{lem:basicfundamentaldomain} (i) and the Diophantine condition we have
  \begin{equation}
    \label{eq:deltalowerbound1}
    (\delta_i \delta_j \e^{2t})^{-\kappa} \ll |c_i d_j \tfrac{x_i}{x_j} - c_j d_i| \ll \delta_i \delta_j,
  \end{equation}
  so $(\delta_i \delta_j)^{-1} \ll \e^{\frac{2\kappa}{\kappa + 1}t}$.
  On the other hand, if $d_j = 0$, we have
  \begin{equation}
    \label{eq:dj0lowerbound}
    \delta_j^{-1} \ll \e^{-t} + |\xi|^{-1}  \e^t \ll \e^{\eta t}.
  \end{equation}
  The claim follows.
\end{proof}

The following lemma connects the classical Diophantine notion with that used in \cite[Theorem~1]{LMW2023a}.
\begin{lem}\label{lem:periodicorbits}
    Let $L \geq 1$, $g_0=\left(\left(\begin{matrix}
        x_j^{\frac{1}{2}} & 0 \\ 0 & x_j^{-\frac{1}{2}}
    \end{matrix}\right)\right)_{\!\!j=1,2}\in G^2$ and assume that $\frac{x_1}{x_2}$ is $\kappa$-Diophantine.
Then for any $\xi\in \R$ and for any closed orbit $gH\subset  \Gamma^2\backslash G^2$, where $H=\operatorname{SL}(2, \mathbb{R})$ embedded diagonally,
    $$\operatorname{dist}_{\Gamma^2 \backslash G^2}( g_0n(\xi), gH)\gg (\operatorname{Vol}(gH))^{-D}$$
    for some constant $D = D(\kappa) >1$.
\end{lem}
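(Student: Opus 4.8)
The plan is to linearise around a hypothetical point of $gH$ close to $g_0n(\xi)$ and extract from it a rational approximation to $\mu:=x_1/x_2$ whose quality is independent of $\xi$, so that the Diophantine hypothesis forces the point apart by a power of $\operatorname{Vol}(gH)$. First I would normalise the target: since $gH$ is closed, $gH=\Gamma^2(1,\gamma_0)H$ for some $\gamma_0$ in the commensurator of $\Gamma=\SL(2,\Z)$ in $\SL(2,\R)$, and writing $\gamma_0=n^{-1/2}\gamma_*$ with $\gamma_*\in\operatorname{Mat}_2(\Z)$ primitive and $\det\gamma_*=n\geq 1$, the volume of the associated Hecke orbit satisfies $\operatorname{Vol}(gH)\gg n$ (in fact $\asymp n\prod_{p\mid n}(1+p^{-1})$). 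Hence it suffices to show $\operatorname{dist}(g_0n(\xi),\Gamma^2(1,\gamma_0)H)\gg_{x_1,x_2,\kappa} n^{-(\kappa+1)/2}$ for every $\xi\in\R$; the hypothesis $L\geq 1$ plays no role. Let $\epsilon:=\operatorname{dist}(g_0n(\xi),\Gamma^2(1,\gamma_0)H)$ and suppose first that $\epsilon$ is below an absolute constant $\epsilon_0$ (small enough that $\operatorname{dist}(u,v)<\eta$ implies $u^{-1}v=I+O(\eta)$ in $\SL(2,\R)$). Then there are $\delta_1,\delta_2\in\Gamma$ and $h\in\SL(2,\R)$ with $\operatorname{dist}(\delta_1g_{0,1}n(\xi),h)<2\epsilon$ and $\operatorname{dist}(\delta_2g_{0,2}n(\xi),\gamma_0 h)<2\epsilon$; using left-invariance of the metric I would write $h=\delta_1g_{0,1}n(\xi)w_1$ and $\gamma_0h=\delta_2g_{0,2}n(\xi)w_2$ with $\|w_i-I\|\ll\epsilon$, eliminate $h$, and put $E:=w_2w_1^{-1}-I$ (so $\|E\|\ll\epsilon$) and $M:=\delta_2^{-1}\gamma_*\delta_1\in\operatorname{Mat}_2(\Z)$, which is primitive with $\det M=n$. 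Since $g_{0,2}g_{0,1}^{-1}=\operatorname{diag}(\mu^{-1/2},\mu^{1/2})$, this yields
\begin{equation*}
n^{-1/2}M=\operatorname{diag}(\mu^{-1/2},\mu^{1/2})+g_{0,2}\,n(\xi)\,E\,n(-\xi)\,g_{0,1}^{-1}.
\end{equation*}

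The key observation is that $n(\xi)En(-\xi)$ has $(2,1)$-entry equal to $E_{21}$ and diagonal equal to $(E_{11}+\xi E_{21},\,E_{22}-\xi E_{21})$. Comparing $(2,1)$-entries above gives $n^{-1/2}M_{21}=(x_1x_2)^{-1/2}E_{21}$, hence $|M_{21}|\ll_{x_1,x_2}\epsilon\, n^{1/2}$; once $\epsilon$ is also smaller than $c(x_1,x_2)\,n^{-1/2}$ the integer $M_{21}$ must vanish, and then $E_{21}=0$, so the diagonal of $n(\xi)En(-\xi)$ loses all $\xi$-dependence. Comparing $(1,1)$- and $(2,2)$-entries then gives $M_{11}=\sqrt{n/\mu}\,(1+E_{11})$ and $M_{22}=\sqrt{n\mu}\,(1+E_{22})$, so $M_{11}$ is a nonzero integer with $|M_{11}|\leq 2\sqrt{n/\mu}$ and
\begin{equation*}
\Bigl|\mu-\tfrac{M_{22}}{M_{11}}\Bigr|=\mu\,\frac{|E_{11}-E_{22}|}{|1+E_{11}|}\ll_{\mu}\epsilon .
\end{equation*}
By the $\kappa$-Diophantine property of $x_1/x_2$ (applied with numerator $M_{22}$ and denominator $M_{11}$), $\bigl|\mu-\tfrac{M_{22}}{M_{11}}\bigr|\gg_{\mu}|M_{11}|^{-\kappa-1}\gg_{\mu,\kappa}n^{-(\kappa+1)/2}$, and combining the two displays gives $\epsilon\gg_{x_1,x_2,\kappa}n^{-(\kappa+1)/2}$.

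It remains to package the estimate. The argument shows that whenever $\epsilon<\min\bigl(\epsilon_0,\,c(x_1,x_2)\,n^{-1/2}\bigr)$ one has $\epsilon\gg_{x_1,x_2,\kappa} n^{-(\kappa+1)/2}$; since $n^{-(\kappa+1)/2}\leq n^{-1/2}\leq 1$, the complementary case $\epsilon\geq\min(\epsilon_0,c(x_1,x_2)n^{-1/2})$ also gives $\epsilon\gg n^{-(\kappa+1)/2}$, so in all cases $\operatorname{dist}(g_0n(\xi),gH)\gg_{x_1,x_2,\kappa}n^{-(\kappa+1)/2}\gg\operatorname{Vol}(gH)^{-(\kappa+1)/2}$ by $\operatorname{Vol}(gH)\gg n$. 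This is the claim with $D=\tfrac{\kappa+1}{2}$, which is $>1$ in the regime $\kappa>1$ relevant to the paper (alternatively one may take the cruder value $D=\kappa+1$). The one delicate point is the observation that the $(2,1)$-entry of $n(\xi)En(-\xi)$, and—after $M_{21}=0$ forces $E_{21}=0$—its diagonal, are independent of $\xi$: this is exactly what makes the lower bound uniform in $\xi$, whereas a naive comparison of the full matrices would only produce a bound deteriorating like $(1+|\xi|)^{-2}$.
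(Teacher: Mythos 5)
Your proof is correct and reaches the same conclusion, but handles the $\xi$-dependence differently, which is worth comparing. Both arguments reduce the closed orbit to $gH = \Gamma^2(\mathrm{id},h)H$ with $h$ proportional to a primitive integer matrix, relate $\operatorname{Vol}(gH)$ to the determinant, and extract from a hypothetical close approach a rational approximation to $x_1/x_2$ to which the Diophantine hypothesis applies. The paper first uses the $\Gamma^2$-action to replace $g_0 n(\xi)$ by $g_0\tilde n(\xi_1,\xi_2)$ with $|\xi_1|,|\xi_2|\ll 1$ (shifting $x_j\xi$ by its nearest integer), then compares the resulting upper-triangular matrix with $(\det A)^{-1/2}A'$ and reads off the $(2,2)$-entry, yielding $\epsilon\gg(\det A)^{-\kappa-1}$, i.e. $D=\kappa+1$. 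You instead keep $\xi$ unreduced and exploit the algebraic fact that the $(2,1)$-entry of $n(\xi)En(-\xi)$ is $E_{21}$, and, once $M_{21}=0$ forces $E_{21}=0$, that the diagonal of $n(\xi)En(-\xi)$ is also $\xi$-free; comparing the two diagonal entries gives a rational approximation $M_{22}/M_{11}$ with denominator $\ll\sqrt{n}$ rather than $n$, hence the sharper $D=(\kappa+1)/2$. These are really the same underlying fact — unipotent conjugation preserves the strictly lower-triangular part — and the paper's bounded-$\xi$ reduction could equally well be paired with a ratio of diagonal entries to recover your constant, so the gain in $D$ is an optimization rather than a structural change. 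Your route has the small advantage of avoiding any choice of representatives modulo $\Gamma^2$. One caveat you already flag: $D=(\kappa+1)/2$ equals $1$ at $\kappa=1$, so to satisfy the stated $D>1$ in the borderline case one should quote $D=\kappa+1$ (or apply the $\kappa'$-Diophantine hypothesis for any $\kappa'>\kappa$, which is automatic).
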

\begin{proof}
  Let $$\tilde n(\xi_1, \xi_2) =
  \left(
  \begin{pmatrix}
    1 & \xi_j \\
    0 & 1
  \end{pmatrix}
\right)_{\!\! j=1,2} . $$
 We may write $\Gamma^2 g_0 n(\xi) = \Gamma^2 g_0 \tilde n(\xi_1, \xi_2)$, for suitable bounded $(\xi_1, \xi_2)\in\R^2$.
  It is therefore sufficent to show that $\operatorname{dist}_{\Gamma^2 \backslash G^2} (g_0 \tilde n(\xi_1, \xi_2), g H) \gg (\operatorname{Vol}(g H))^{-D}$
  for $|\xi_1|, |\xi_2| \ll 1$. 
  
  We may assume $g = (\mathrm{id}, h)$.
  We note that if for $A \in \R^{2 \times 2}$, $g_1 A g_2^{-1} =A$ for all $(g_1,g_2) \in g H g^{-1}$, then $A$ is a multiple of $h$.
  Since $\Gamma^2 \cap g H g^{-1}$ is Zariski dense, then the same holds if $\gamma_1 A \gamma_2^{-1} = A$ for all $(\gamma_1, \gamma_2) \in \Gamma^2 \cap g H g^{-1}$.
  Since $\gamma_1 h \gamma_2^{-1} = h$ for $(\gamma_1, \gamma_2) \in \Gamma^2 \cap g H g^{-1}$ defines a system of integer linear equations with a one-dimensional set of solutions, it follows that $h = (\det A)^{-\frac{1}{2}} A$ with $A = (a_{ij})\in \Z^{2\times 2}$.

  We may assume the entries of $A$ are coprime.  
  Since $\Gamma \cap h^{-1} \Gamma h$ is conjugate in $\Gamma$ to $\Gamma_0(\det A)$, where
  $$\Gamma_0(N) = \{
  \begin{pmatrix}
    a & b \\
    c & d
  \end{pmatrix}
  \in \Gamma : c \equiv 0 \bmod N\}
  $$ is the Hecke congruence subgroup of level $N$.
  The index of $\Gamma_0(\det A)$ in $\Gamma$ is $\geq |\det(A)|$, so $|\det A| \ll \operatorname{Vol}(gH)$.

  Suppose $\operatorname{dist}_{\Gamma^2 \backslash G^2} (g_0 \tilde n(\xi_1, \xi_2), g H)  \leq \epsilon$.
  Then there is $\gamma \in \Gamma^2$, $h_1 \in G$ with $|h_1 - \mathrm{id}| \ll \epsilon$, and $h_2 \in G$ such that $g_0 \tilde n(\xi_1, \xi_2) (\mathrm{id}, h_1)= \gamma (\mathrm{id}, h) (h_2, h_2)$.
  It follows that
  \begin{equation}
    \label{eq:x1x2diophantineapprox}
    \begin{pmatrix}
      (\tfrac{x_1}{x_2})^{-\frac{1}{2}} & (x_1 x_2)^{\frac{1}{2}} (\xi_2 - \xi_1) \\
      0 & ( \tfrac{x_1}{x_2})^{\frac{1}{2}}
    \end{pmatrix}
    = \tfrac{1}{\sqrt{\det A}} A' + O(\epsilon).
  \end{equation}
  for $A' \in \Z^{2\times 2}$.
  Inspecting the $(2,2)$-entry, the Diophantine condition on $\tfrac{x_1}{x_2}$ implies $\epsilon \gg (\det A)^{-\kappa -1}$ and the lemma follows. 
\end{proof}

Applying Lemma \ref{lem:periodicorbits} to \cite[Theorem~1]{LMW2023a}, we have the following.
\begin{prop}
  \label{prop:effectiveequidistribution}
  Let $L\geq 1$ be an integer, $g_0 = \left(  \begin{pmatrix}  x_j^{\frac{1}{2}} & 0 \\
    0 & x_j^{-\frac{1}{2}}
  \end{pmatrix}
  \right)_{\!\! j=1,2}$ with $\tfrac{x_1}{x_2}$ $\kappa$-Diophantine, and $\varphi$ be a smooth compactly supported function on $\Gamma^2 \backslash G^2$.
  Then there exists $\delta > 0$ such that
  \begin{equation}
    \label{eq:effectiveequidistribution}
    \tfrac{1}{L} \int_0^L \varphi(g_0n(\xi) a(t)) \dd \xi = \int_{\Gamma^2 \backslash G^2} \varphi(g) \dd \mu(g) + O( \cS(\varphi) \e^{-\delta t})
  \end{equation}
  where $\mu$ is the Haar probability measure on $\Gamma^2 \backslash G^2$ and $\cS(\varphi)$ is a certain Sobolev norm of $\varphi$. 
\end{prop}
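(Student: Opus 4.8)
The plan is to deduce \eqref{eq:effectiveequidistribution} from the effective equidistribution of one-parameter unipotent flows on $\Gamma^2\backslash G^2$ proved in \cite[Theorem~1]{LMW2023a}, with Lemma \ref{lem:periodicorbits} supplying exactly the Diophantine input that theorem requires. The first step is to recognise $\tfrac1L\int_0^L\varphi(g_0n(\xi)a(t))\dd\xi$ as an average of the kind treated there. Since $n(\xi)a(t)=a(t)n(\e^{2t}\xi)$ and $a(t)$ (viewed via the diagonal embedding) lies in $H$, the substitution $s=\e^{2t}\xi$ gives
\begin{equation*}
  \frac1L\int_0^L\varphi(g_0n(\xi)a(t))\dd\xi=\frac1T\int_0^T\varphi\big(g_0a(t)n(s)\big)\dd s,\qquad T=L\e^{2t},
\end{equation*}
so the left-hand side is the average of $\varphi$ over a unipotent-orbit segment of length $T\geq\e^{2t}$; equivalently, it is the average of $\varphi$ over the time-$t$ expanding translate (by right multiplication by $a(t)$) of the length-$L$ segment $\{g_0n(\xi):0\leq\xi\leq L\}$ of the unipotent orbit through $g_0$, which is precisely the situation \cite[Theorem~1]{LMW2023a} addresses.

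Next I would invoke \cite[Theorem~1]{LMW2023a} as a dichotomy: for smooth compactly supported $\varphi$, either the above average lies within $O(\cS(\varphi)\,\e^{-\delta t})$ of $\int\varphi\,\dd\mu$, with $\delta>0$ and $\cS$ depending only on the space, or else the orbit comes within distance $\ll\operatorname{Vol}(Y)^{-D_0}$ of some closed $H$-orbit $Y$ with $\operatorname{Vol}(Y)\ll\e^{\delta_0t}$, the exponents $D_0,\delta_0$ being absolute. The second alternative is exactly what Lemma \ref{lem:periodicorbits} forbids: the $\kappa$-Diophantine hypothesis on $\tfrac{x_1}{x_2}$ yields $\operatorname{dist}_{\Gamma^2\backslash G^2}(g_0n(\xi),gH)\gg\operatorname{Vol}(gH)^{-D}$ for every $\xi\in\R$ and every closed $H$-orbit $gH$, with $D=D(\kappa)$. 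Since every closed $H$-orbit is invariant under right multiplication by $a(s)$ for all $s$ (as $a(s)\in H$), this lower bound persists along the orbit segment in question, and once $t$ exceeds a threshold depending on $\kappa$ and the implied constants, $\operatorname{Vol}(Y)^{-D}$ overtakes $\operatorname{Vol}(Y)^{-D_0}$ uniformly over all $Y$ with $\operatorname{Vol}(Y)\ll\e^{\delta_0t}$; hence the equidistribution alternative must hold. Because the rate in \cite[Theorem~1]{LMW2023a} is controlled by $T=L\e^{2t}\geq\e^{2t}$, this gives \eqref{eq:effectiveequidistribution} with a rate $\e^{-\delta t}$ that is uniform in $L\geq1$; for $t$ below the threshold the estimate is trivial after enlarging the implied constant.

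The step I expect to require the most care is the bookkeeping at the interface with \cite{LMW2023a}: one must match the normalisation used here (an expanding translate parametrised by $\xi\in[0,L]$, with the rate expressed in $t$) with theirs, deal with the integrality restriction on $L$, and, above all, track how the Diophantine exponent $D(\kappa)$ compares with the volume and proximity exponents appearing in \cite[Theorem~1]{LMW2023a}. The point that makes this work cleanly is that \cite[Theorem~1]{LMW2023a} places the non-equidistribution obstruction on the \emph{un-translated} orbit through $g_0$, so that Lemma \ref{lem:periodicorbits} feeds in directly and no factor of $\e^{-2t}$ is lost in passing from $g_0n(\xi)$ to $g_0a(t)n(s)$; if one instead tried to verify the hypothesis at the drifting base point $g_0a(t)$ via a crude metric-distortion estimate, the resulting loss would be too large to beat the polynomial volume threshold. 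Once the precise form of \cite[Theorem~1]{LMW2023a} is in hand, the remainder is quantitative but essentially routine.
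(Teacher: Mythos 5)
Your overall strategy is the same as the paper's: invoke \cite[Theorem~1]{LMW2023a} as a dichotomy and use Lemma \ref{lem:periodicorbits} to rule out the non-equidistribution alternative. However, your formulation of the dichotomy has a gap that, as written, would break the argument for general $\kappa$. You state the second alternative as ``the orbit comes within distance $\ll\operatorname{Vol}(Y)^{-D_0}$ of some closed $H$-orbit $Y$ with $\operatorname{Vol}(Y)\ll\e^{\delta_0 t}$, the exponents $D_0,\delta_0$ being absolute.'' But \cite[Theorem~1]{LMW2023a} contains a \emph{free} volume-cutoff parameter $R$, and the distance threshold in the alternative is of order $(Rt)^A\e^{-t}$ --- i.e.\ exponentially small in $t$, not merely polynomial in $\operatorname{Vol}(Y)^{-1}$. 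If you freeze $D_0,\delta_0$ as absolute constants, then ruling out the alternative via Lemma \ref{lem:periodicorbits} (which gives $\operatorname{dist}\gg\operatorname{Vol}(gH)^{-D}$ with $D=D(\kappa)$) requires $D(\kappa)<D_0$; nothing in your argument supplies this, and for $\kappa$ not extremely close to $1$ it may simply fail.

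The paper avoids this by exploiting the free parameter: it sets $R=\e^{\delta_1 t}$ and lets $\delta_1>0$ be chosen \emph{after} $\kappa$ is fixed. Then the alternative would force $\operatorname{dist}(g_0n(l+\xi),gH)\leq (tR)^A\e^{-t}$ for some $gH$ with $\operatorname{Vol}(gH)\leq R=\e^{\delta_1 t}$, while Lemma \ref{lem:periodicorbits} gives $\operatorname{dist}\gg R^{-D(\kappa)}=\e^{-D(\kappa)\delta_1 t}$. Choosing $\delta_1$ small enough that $D(\kappa)\delta_1<1$ makes the lower bound polynomially small in $t$ while the upper bound is essentially $\e^{-t}$, so the alternative is impossible for large $t$ --- for \emph{every} $\kappa$. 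You do gesture at this (``track how the Diophantine exponent $D(\kappa)$ compares with the volume and proximity exponents'') and you correctly emphasize that the Diophantine hypothesis is checked at the un-translated base point so no loss of $\e^{-2t}$ is incurred. But the quantitative heart of the proof is exactly the choice of $R=\e^{\delta_1 t}$ with $\delta_1=\delta_1(\kappa)$ small, and that step is missing from (indeed, precluded by) your stated form of the dichotomy. Secondary bookkeeping also differs: the paper decomposes $[0,L]$ into unit intervals $[l,l+1]$ and applies the theorem to each (hence the integrality of $L$), whereas you reparametrize to a single length-$T=L\e^{2t}$ unipotent segment; this is fine in spirit but would need the theorem stated for variable-length segments.
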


\begin{proof}
  We consider the integral $\int_{0}^1 \varphi(g_0 n(l + \xi) a(t)) \dd \xi$ for $l = 0,\dots, L-1$.
  Let $R = \e^{\delta_1 t}$ for small $\delta_1 > 0$.
  By \cite[Theorem~1]{LMW2023a}, we have either
  \begin{equation}
    \label{eq:effectiveequidist1}
    \int_{0}^1 \varphi(g_0 n(l + \xi) a(t)) \dd \xi = \int_{\Gamma^2 \backslash G^2} \varphi(g) \dd \mu (g) + O(\cS(\varphi) R^{-\delta_2})
  \end{equation}
  or there is $g \in \Gamma^2 \backslash G^2$ such that $\operatorname{Vol}(g H) \leq R$ and
  \begin{equation}
    \label{eq:effectiveequidistalt}
    \operatorname{dist}_{\Gamma^2 \backslash G^2} ( g_0n(l + \xi), gH) \leq (t R)^A \e^{-t}.
  \end{equation}
  By taking $\delta_1$ to be a sufficiently small (depending only on $\kappa$), Lemma \ref{lem:periodicorbits} implies that (\ref{eq:effectiveequidistalt}) cannot hold.
  Therefore (\ref{eq:effectiveequidist1}) holds with remainder bound $\cS(\varphi) \e^{-\delta t}$ with $\delta = \delta_1 \delta_2$. 
\end{proof}

\section{Asymptotics of $\cN_{M,L}(F,\psi) $}
\label{sec:asymptotics}

In this section we determine the asymptotic behavior of $R(L,M)$ assuming Theorem~\ref{thm:escapeofmass}, which is proved in the following sections.
We recall
\begin{equation}
  \label{eq:RLMtheta}
  \cN_{M,L}(F,\psi)  = \tfrac{1}{L} \int_{\R} \hat{\psi}(\tfrac{1}{L} \xi) \widetilde\Theta_F(1, g_0 n(\xi) a(t) g_0^{-1}) \dd \xi
\end{equation}
with $t = \log M$.

\subsection{Small $\xi$}
\label{sec:smallxi}

We apply the following with $\epsilon = \e^{-\eta t}$, $\eta > 0$ small.
\begin{prop}
Let $d\geq 1$. For $\epsilon>0$ and $A > 1$,
\begin{multline}\label{lemX1}
 \tfrac{1}{L} \int_{|\xi|\leq \epsilon \e^{-t}} \hat{\psi} ( \tfrac{1}{L}\xi ) \widetilde\Theta_F(1, g_0 n(\xi) a(t) g_0^{-1}) \dd \xi \\
= \tfrac{1}{L} \e^{(d-2)t} \hat \psi(0) \int_{\mathbb{R}^{2d}} F(v) \e( \tfrac{1}{2} \xi Q(v) ) \dd v
+ O_{F,\psi, A}(\epsilon^{A} + L^{-2} \e^{(d-4)t} + L^{-1} \epsilon^{-d + 1} \e^t) .
\end{multline}
\end{prop}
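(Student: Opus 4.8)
The plan is to insert the explicit formula for $\widetilde\Theta_F$ and then apply Poisson summation. Since $a(t)$ commutes with $g_0$, we have $g_0 n(\xi) a(t) g_0^{-1} = \tilde n\big((x_j\xi)_j\big)\,a(t)$, whose $j$-th coordinate equals $\smallmat{1 & x_j\xi \\ 0 & 1}\smallmat{\e^{-t} & 0 \\ 0 & \e^{t}}$. Feeding this into \eqref{eq:thetaexdd}, with $y_j=\e^{-2t}$ and translation parameters $x_j\xi$, yields the exact identity
\[
  \widetilde\Theta_F\big(1, g_0 n(\xi) a(t) g_0^{-1}\big) = \e^{-dt}\sum_{l\in\Z^{2d}}F(\e^{-t}l)\,\e\big(\tfrac{1}{2}\xi Q(l)\big).
\]
Since $F\in\cS(\R^{2d})$ the summand is Schwartz in $l$, so Poisson summation (substituting $x=\e^{t}v$ and using $Q(\e^{t}v)=\e^{2t}Q(v)$) gives, for each fixed $\xi$,
\[
  \widetilde\Theta_F\big(1, g_0 n(\xi) a(t) g_0^{-1}\big) = \e^{dt}\sum_{k\in\Z^{2d}}\int_{\R^{2d}}F(v)\,\e\big(\tfrac{1}{2}\xi\e^{2t}Q(v)-\e^{t}k\cdot v\big)\,\dd v ,
\]
where the $k$-sum converges absolutely and rapidly, uniformly for bounded $\xi$ (each summand is the Fourier transform of a Schwartz function evaluated at $\e^{t}k$), so it may be integrated term by term against $\tfrac1L\hat{\psi}(\tfrac1L\xi)$ over $|\xi|\le\epsilon\e^{-t}$.

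For $k=0$ the substitution $\zeta=\xi\e^{2t}$ rewrites that contribution as $\tfrac{\e^{(d-2)t}}{L}\int_{|\zeta|\le\epsilon\e^{t}}\hat{\psi}\big(\tfrac{\zeta}{L\e^{2t}}\big)\,G(\zeta)\,\dd\zeta$ with $G(\zeta)=\int_{\R^{2d}}F(v)\,\e(\tfrac12\zeta Q(v))\,\dd v$. Here $|G(\zeta)|\ll_F1$ trivially, and $|G(\zeta)|\ll_F|\zeta|^{-d}$ for $|\zeta|\ge1$ by stationary phase (since $Q$ is nondegenerate of signature $(d,d)$), so $G\in L^1(\R)$ once $d\ge2$; the main term $\tfrac1L\e^{(d-2)t}\hat{\psi}(0)\int_\R G(\zeta)\,\dd\zeta$, which is the first term on the right of \eqref{lemX1}, is then extracted by replacing $\hat{\psi}(\tfrac{\zeta}{L\e^{2t}})$ with $\hat{\psi}(0)$ and extending the $\zeta$-integral to all of $\R$. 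The first replacement costs at most $\tfrac{\e^{(d-2)t}}{L}\cdot\tfrac{1}{L\e^{2t}}\int_\R|\zeta|\min(1,|\zeta|^{-d})\,\dd\zeta\ll_\psi L^{-2}\e^{(d-4)t}$, and discarding the range $|\zeta|>\epsilon\e^{t}$ costs at most $\tfrac{\e^{(d-2)t}}{L}\int_{|\zeta|>\epsilon\e^{t}}|\zeta|^{-d}\,\dd\zeta\ll_\psi L^{-1}\epsilon^{-d+1}\e^{t}$; these are the second and third error terms.

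For $k\ne0$, on $\supp F$ the phase $\tfrac12\xi\e^{2t}Q(v)-\e^{t}k\cdot v$ has gradient of size $\gg\e^{t}|k|$ (since $|\xi|\e^{2t}\le\epsilon\e^{t}$ and $|k|\ge1$, so for $\epsilon$ small it has no critical point) and Hessian of size $\ll\epsilon\e^{t}$; integrating by parts $M$ times, equivalently bounding the Fourier transform of $v\mapsto F(v)\e(\tfrac12\xi\e^{2t}Q(v))$ at $\e^{t}k$ through its Schwartz seminorms, gives $\ll_{F,M}(\e^{-t}+|\xi|\e^{t})^{M}|k|^{-M}$. Summing over $k\ne0$ with $M>2d$ and integrating over $|\xi|\le\epsilon\e^{-t}$ bounds the full $k\ne0$ contribution by $\ll_{F,\psi,M}L^{-1}\epsilon^{M+1}\e^{(d-1)t}$, which for $M$ taken large enough in terms of $A$ and $d$ is $\ll_{F,\psi,A}\epsilon^{A}$; this is the first error term, and adding the three contributions proves \eqref{lemX1}. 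No equidistribution input enters here --- this is the ``major arc'' --- so I expect the only genuine difficulty to be the oscillatory-integral bookkeeping: the decay bound for $G(\zeta)$ for the indefinite form $Q$ (which also forces $d\ge2$ for the main integral to converge) and the uniformity in $\xi$ and $t$ of the non-stationary-phase estimate for $k\ne0$.
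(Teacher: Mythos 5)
Your proof is correct, and it takes a more direct and self-contained route than the paper's. The paper first writes $\widetilde\Theta_F(1,g_0n(\xi)a(t)g_0^{-1})=\widetilde\Theta_{F^0}(1,g_0n(\xi)a(t))$ with $F^0=R_{d,d}(g_0^{-1})F$, applies the inversion $g\mapsto Jg$ to put $g_0n(\xi)a(t)$ into the Iwasawa form \eqref{eq:gIwasawa}--\eqref{eq:YRS} with $\min_j v_j\gg\epsilon^{-2}$, and then invokes the black-box asymptotic of Theorem~\ref{thm:thetaasymptotic} (error $(\min_j v_j)^{-A}\ll\epsilon^{2A}$) together with Proposition~\ref{prop:oscillator} to identify $(v_1\cdots v_d)^{\frac12}F^0_\theta(0)$ with the oscillatory integral $\e^{dt}\int F^0(u,u')\,\e\bigl(\tfrac12\xi\e^{2t}(u\transp{u}-u'\transp{u}')\bigr)\,\dd u\,\dd u'$. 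You instead start from the explicit lattice-sum representation \eqref{eq:thetaexdd} and apply Poisson summation directly: the $k=0$ term reproduces precisely that oscillatory integral, while the $k\ne0$ terms, which play the role of Theorem~\ref{thm:thetaasymptotic}'s error, are killed by integration by parts, the linear frequency $\e^tk$ dominating the Hessian term $\sim|\xi|\e^{2t}\le\epsilon\e^t$ on the range $|\xi|\le\epsilon\e^{-t}$. In effect you re-prove exactly the piece of Theorem~\ref{thm:thetaasymptotic} that is needed, which makes the argument more elementary and avoids a detour through the Shale--Weil machinery and the fundamental-domain reduction. The subsequent bookkeeping --- replacing $\hat\psi(\zeta/(L\e^{2t}))$ by $\hat\psi(0)$, completing the $\zeta$-integral, and the stationary-phase decay $|G(\zeta)|\ll|\zeta|^{-d}$ --- then matches the paper's \eqref{eq:psi0replace}--\eqref{eq:smallxi1} line for line.

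Two small points. Your bound $\ll L^{-1}\epsilon^{M+1}\e^{(d-1)t}$ for the $k\ne0$ contribution, and the step from it to $\epsilon^A$, implicitly use $\e^{-t}\ll\epsilon\le1$ (the regime of the application, $\epsilon=\e^{-\eta t}$ with $0<\eta<1$); for tinier $\epsilon<\e^{-t}$ the honest bound is $\ll L^{-1}\epsilon\,\e^{(d-1-M)t}$, which for $M\ge d-1$ is $\ll L^{-1}\epsilon$ and is then absorbed by the third error term, so the statement survives, but a sentence saying so would tighten the argument. Also note that the proposition as printed has third error term $L^{-1}\epsilon^{-d+1}\e^{t}$, whereas both your computation and the paper's own line \eqref{eq:smallxi1} give $L^{-1}\epsilon^{-d+1}\e^{-t}$; the power of $\e^t$ in the statement is evidently a typo, and your derivation produces the correct sign of the exponent.
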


\begin{proof}
It will be convenient to write the theta function as 
\begin{equation}
\widetilde\Theta_F(1, g_0 n(\xi) a(t) g_0^{-1}) =
\widetilde\Theta_{F^0}(1, g_0 n(\xi) a(t))
\end{equation}
with $F^0 = R_{d,d}(g_0^{-1}) F$.
We set 
\begin{equation}
  \label{eq:JgXM}
  g = \left(  \begin{pmatrix}   0 & -1 \\
    1 & 0 
  \end{pmatrix}
  \right)_{\!\!1\leq j\leq d}
  g_0  n(\xi) a(t)  = \left(  \begin{pmatrix}   0 & - \e^t x_j^{-\frac{1}{2}} \\
    \e^{-t} x_j^{\frac{1}{2}} & \xi \e^t x_j^{\frac{1}{2}}
  \end{pmatrix}
  \right)_{\!\!1\leq j\leq d},
\end{equation}
and we compute that
\begin{equation}
  \label{eq:gIwasawa}
  g = \left(  \begin{pmatrix}   1 & u_j \\
    0 & 1
  \end{pmatrix}
  \begin{pmatrix}
    v_j^{\frac{1}{2}} & 0 \\
    0 & v_j^{-\frac{1}{2}}
  \end{pmatrix}
  \begin{pmatrix}
    \cos \theta_j & -\sin \theta_j \\
    \sin \theta_j  & \cos \theta_j
  \end{pmatrix}
  \right)_{\!\!1\leq j\leq d},
\end{equation}
with
\begin{equation}
  \label{eq:YRS}
  v_j =  (\e^{-2t} + \xi^2\e^{2t})^{-1}  x_j^{-1},\ \cos \theta_j = \xi \e^t x_j^{\frac{1}{2}}v_j^{\frac{1}{2}},\  \sin \theta_j = \e^{-t} x_j^{\frac{1}{2}}v_j^{\frac{1}{2}}. 
\end{equation}
We observe that if $|\xi| \leq \epsilon \e^{-t}$, then
\begin{equation}
  \label{eq:vnGammang}
  \min_{1\leq j\leq d} v_j \gg \epsilon^{-2}.
\end{equation}
By Lemma \ref{lem:shortestvector}, we may choose $\epsilon$ sufficiently small so that $\alpha_d(g) = \rho_d(g)^{-1}$ and $\alpha_{d-1}(g) \ll \epsilon \alpha_d(g)$. 

By Theorem \ref{thm:thetaasymptotic}, we have
\begin{equation}
  \label{eq:Thetaasymptotic}
  \widetilde\Theta_{F^0}( 1, g_0 n(\xi) a(t ))  = ( v_1 \cdots v_d)^{\frac{1}{2}} F^0_\theta(0) + O_{F, A}(\epsilon^A)
\end{equation}
for any $A > 0$ and from Proposition \ref{prop:oscillator} and \eqref{eq:YRS}, we have
\begin{equation}
  \label{eq:fQ0}
  F^0_\theta(0) = \e^{dt} (v_1 \cdots v_d)^{-\frac{1}{2}} \int_{\mathbb{R}^{2d}} F^0(u,u') \e( \tfrac{1}{2} \xi \e^{2t} (u\transp{u}-u'\transp{u}') ) \dd u \dd u'.
\end{equation}
It follows that
\begin{multline}
  \label{eq:smallxi}
  \tfrac{1}{L} \int_{|\xi|\leq \epsilon \e^{-t}} \hat{\psi} ( \tfrac{1}{L}\xi ) \widetilde\Theta_{F^0}(1, g_0 n(\xi) a(t) ) \dd \xi \\
  = \tfrac{1}{L}\e^{dt} \int_{|\xi| \leq \epsilon \e^{-t}} \hat{\psi}( \tfrac{1}{L}\xi)  \bigg( \int_{\mathbb{R}^{2d}} F^0(u,u') \e( \tfrac{1}{2} \xi \e^{2t} (u\transp{u}-u'\transp{u}') ) \dd u \dd u'\bigg)  \dd \xi + O_{f, \psi, A}(\epsilon^A).
\end{multline}
The inner integral over $\dd u \dd u'$ is $O_{F}( (1 + |\xi| \e^{2t})^{-d})$, so replacing $\hat \psi(\tfrac{1}{L} \xi)$ with $\hat \psi(0)$ creates an error bounded by
\begin{equation}
  \label{eq:psi0replace}
  \ll_{\psi, F} L^{-2} \e^{dt} \int_{|\xi| \leq \epsilon \e^{-t}} \frac{ |\xi|}{(1 + |\xi| \e^{2t})^d} \dd \xi \ll L^{-2} \e^{(d-4)t}. 
\end{equation}
After changing variables $\xi \gets \e^{-2t} \xi$, we have that (\ref{eq:smallxi}) is 
\begin{equation}
\begin{split}
  \label{eq:smallxi1}
&\tfrac{1}{L} \e^{(d-2)t} \hat\psi(0)  \int_{|\xi| \leq \epsilon \e^{t}} \bigg( \int_{\mathbb{R}^{2d}} F^0(u,u') \e( \tfrac{1}{2} \xi (u\transp{u}-u'\transp{u}') ) \dd u \dd u'\bigg)  \dd \xi \\
&  \qquad + O_{F,\psi,A}(\epsilon^{A} + L^{-2} \e^{(d-4)t})\\
& = \tfrac{1}{L} \e^{(d-2)t} \hat \psi(0)  \int_\R \bigg( \int_{\mathbb{R}^{2d}} F^0(u,u') \e( \tfrac{1}{2} \xi (u\transp{u}-u'\transp{u}') ) \dd u \dd u'\bigg)  \dd \xi \\
 & \qquad + O_{F,\psi, A}(\epsilon^{A} + L^{-2} \e^{(d-4)t} + L^{-1} \epsilon^{-d + 1} \e^{-t}).
\end{split}
\end{equation}
Finally, we recall that $F_0=R_{d,d}(g_0^{-1}) F$, which yields after the appropriate variable substitution
\begin{equation}
\begin{split}
 & \int_{\mathbb{R}^{2d}} F^0(u,u') \e( \tfrac{1}{2} \xi (u\transp{u}-u'\transp{u}') ) \dd u \dd u' \\
& = (\det X_0)^{-1} \int_{\mathbb{R}^{2d}} F(u X_0^{-1/2},u' X_0^{-1/2}) \e( \tfrac{1}{2} \xi (u\transp{u}-u'\transp{u}') ) \dd u \dd u' \\
& = \int_{\mathbb{R}^{2d}} F(v) \e( \tfrac{1}{2} \xi Q(v) ) \dd v.
\end{split}
\end{equation}
 
\end{proof}

\subsection{Remaining $\xi$}
\label{sec:remaining}

To analyze the integral \eqref{eq:RLMtheta} over $|\xi| \geq \e^{-(1 + \eta)t}$, we appeal to results on the equidistribution of unipotent flows, ie Ratner's theorems and their relatives.

We let $\Gamma' \subset \Gamma^d $ be the finite index subroup of $\gamma = \left(
\begin{pmatrix}
  a_j & b_j \\
  c_j & d_j
\end{pmatrix}
\right)_{\!\!1\leq j\leq d}$ such that $c_jd_j \equiv a_jb_j \equiv 0 \bmod 2$.
We note that from Theorem~\ref{thm:thetaautomorphy}, $|\Theta_f(1, g)|^2$ is a well-defined function $\Gamma' \backslash G^d$.

\begin{lem}
  \label{lem:density}
  Let $x = (x_j)_j \in \R^d$ be so that $q\transp x \neq 0$ for any nonzero $q \in \Z^d$ and let $g_0 = \left(  \begin{pmatrix}   x_j^{\frac{1}{2}} & 0 \\
    0 & x_j^{-\frac{1}{2}}
  \end{pmatrix}
  \right)_{\!\!1\leq j\leq d} \in G^d$.
  Then $\Gamma' g_0 H g_0^{-1}$ is dense in $\Gamma' \backslash G^d$.
\end{lem}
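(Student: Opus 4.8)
The plan is to invoke Ratner's orbit closure theorem and then use the Diophantine hypothesis to rule out every proper intermediate subgroup. Write $x_0 = \Gamma' e \in \Gamma'\backslash G^d$ and $U = g_0 H g_0^{-1}$, so that $\Gamma' g_0 H g_0^{-1} = x_0 U$. Since $H = \SL(2,\R)$ is generated by one-parameter unipotent subgroups, so is $U$, and Ratner's orbit closure theorem provides a closed subgroup $L \subseteq G^d$ with $U \subseteq L$, $\overline{x_0 U} = x_0 L$, and $\Gamma' \cap L$ a lattice in $L$; as $G^d$ is the group of real points of a linear algebraic group, $L$ may be taken with $[L : L^0] < \infty$, so that $\Gamma' \cap L^0$ is a lattice in $L^0$. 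It then suffices to show $L^0 = G^d$, since then $\overline{x_0 U} \supseteq x_0 L^0 = \Gamma'\backslash G^d$.

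Next I would describe $\mathfrak{l} := \operatorname{Lie}(L)$, a subalgebra of $\mathfrak{sl}_2(\R)^d$ containing $\mathfrak{v} := \operatorname{Lie}(U) = \Ad(g_0)(\Delta\mathfrak{sl}_2(\R))$, where $\Delta$ is the diagonal embedding. Since $\mathfrak{v}$ surjects onto each of the $d$ simple factors, so does $\mathfrak{l}$, and a Goursat-type argument using simplicity of $\mathfrak{sl}_2(\R)$ shows that $\mathfrak{l}$ is semisimple (its radical projects to a solvable ideal, hence to $0$, in each factor) with $\mathfrak{l} = \bigoplus_{r=1}^{k}\mathfrak{s}_r$, where $B_1, \dots, B_k$ partitions $\{1, \dots, d\}$ and $\mathfrak{s}_r \subseteq \bigoplus_{j\in B_r}\mathfrak{sl}_2(\R)$ is a copy of $\mathfrak{sl}_2(\R)$ embedded as the graph of isomorphisms onto the factors. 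Restricting $\mathfrak{v}\subseteq\mathfrak{l}$ to the coordinates in $B_r$ gives an injection of $\mathfrak{v}$ with $3$-dimensional image inside the $3$-dimensional $\mathfrak{s}_r$, whence $\mathfrak{s}_r = \{(\Ad(g_0^{(j)})X)_{j\in B_r} : X \in \mathfrak{sl}_2(\R)\}$ with $g_0^{(j)} = \operatorname{diag}(x_j^{1/2}, x_j^{-1/2})$.

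The crux is to show that every block $B_r$ is a singleton; then $\mathfrak{l} = \mathfrak{sl}_2(\R)^d$ and $L^0 = G^d$, finishing the proof. Suppose instead that some $B_r \supseteq \{i, j\}$ with $i \neq j$. The projection of $L^0$ to the $(i,j)$ coordinates has Lie algebra $\{(\Ad(g_0^{(i)})X, \Ad(g_0^{(j)})X) : X\}$, hence equals the twisted diagonal $\{(g_0^{(i)} h (g_0^{(i)})^{-1},\, g_0^{(j)} h (g_0^{(j)})^{-1}) : h \in \SL(2,\R)\}$, so every $\gamma = (\gamma_1, \dots, \gamma_d) \in \Gamma' \cap L^0$ satisfies $\gamma_j = a\gamma_i a^{-1}$ with $a = \operatorname{diag}((x_j/x_i)^{1/2}, (x_i/x_j)^{1/2})$. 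The hypothesis $q\transp x \neq 0$ for nonzero $q \in \Z^d$ gives $x_i/x_j \notin \Q$ (an equality $x_i/x_j = p_0/q_0$ would yield $(q_0 e_i - p_0 e_j)\transp x = 0$). Since conjugation by $a$ scales the off-diagonal entries of $\gamma_i = \smallmat{p & q \\ r & s}$ by the irrationals $x_j/x_i$ and $x_i/x_j$ while $\gamma_i, \gamma_j \in \SL(2,\Z)$, integrality forces $q = r = 0$, hence $\gamma_i = \pm I$. Thus $\pi_i(\Gamma'\cap L^0) \subseteq \{\pm I\}$. But $\Gamma'\cap L^0$ is a lattice in the semisimple Lie group $L^0$, hence Zariski dense by the Borel density theorem, so its image under the surjective homomorphism $\pi_i\colon L^0 \to \SL(2,\R)$ is Zariski dense in $\SL(2,\R)$, contradicting $\pi_i(\Gamma'\cap L^0) \subseteq \{\pm I\}$.

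I expect the main obstacle to be the structural input: the classification of subalgebras of $\mathfrak{sl}_2(\R)^d$ surjecting onto each factor, together with the bookkeeping needed to reduce from $L$ to its identity component $L^0$ before applying Borel density. Once this structure is in hand, the Diophantine hypothesis enters only through the elementary observation that conjugation by a diagonal matrix with an irrational ratio of diagonal entries cannot preserve $\SL(2,\Z)$.
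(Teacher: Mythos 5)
Your proof is correct, but it takes a genuinely different and heavier route than the paper's. The paper gives a short horosphere argument: conjugating, $g_0\,n(\xi)\,a(t)\,g_0^{-1}=\tilde n(\xi x)\,a(t)$, the hypothesis $q\transp x\neq 0$ makes the line $\xi\mapsto \xi x$ dense in $\R^d/2\Z^d$, so the $N$-part of the orbit fills out the closed horosphere $\Gamma' N^d$, and density of $\Gamma' g_0 H g_0^{-1}$ then follows from the classical equidistribution of expanding horospheres under the diagonal flow $a(t)$ (Margulis thickening / mixing), a pre-Ratner input. You instead invoke Ratner's orbit-closure theorem, classify the intermediate subalgebras of $\mathfrak{sl}_2(\R)^d$ containing the twisted diagonal by a Goursat argument, and use Borel density together with the irrationality of each ratio $x_i/x_j$ to exclude any nontrivial block. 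Both approaches are sound; what yours buys is a strictly stronger result: you only need pairwise irrationality of the ratios $x_i/x_j$, whereas the paper's horosphere argument genuinely requires the full $\Q$-linear independence of $x_1,\dots,x_d$ so that $\{\xi x\}$ is dense in the torus. The one point to handle with some care — which you have already flagged — is the reduction from $L$ to its identity component $L^0$: the finiteness of $[L:L^0]$ and the persistence of the lattice property for $\Gamma'\cap L^0$ require the algebraic form of Ratner/Shah for arithmetic quotients, but this is standard bookkeeping and does not affect the substance of the argument.
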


\begin{proof}
  For $\xi = (\xi_j)_j \in \R^d$, we let $\tilde n(\xi) = \left(  \begin{pmatrix}   1 & \xi_j \\
    0 & 1
  \end{pmatrix}
  \right)_{\!\!1\leq j\leq d} \in G^d$.
  We have $g_0 n(\xi)a(t) g_0^{-1} = \tilde n(\xi x) a(t)$.
  Moreover, by the condition on $x$, we have that $\xi x$ is dense in $\R^d / 2\Z^d$.

  Since $\tilde n(\xi)$ for $\xi \in \R^d / 2\Z^d$ parametrizes the full horospherical subgroup for $a(t)$, it follows that the set $\{\Gamma' \tilde n(\xi)a(t) : \xi \in \R^d / 2Z^d\}$ is dense in in $\Gamma' \backslash G^d$.
  It follows that $\{\Gamma' g_0 n(\xi) a(t) g_0^{-1} : \xi \in \R\}$ is dense in $\Gamma' \backslash G^d$.
\end{proof}

By a theorem of Shah \cite{MR1403756}, Lemma \ref{lem:density} implies that for any bounded continuous function $\varphi$ on $\Gamma' \backslash G^d$,
\begin{equation}
  \label{eq:equidistribution}
  \lim_{t\to \infty} \tfrac{1}{L} \int \hat\psi( \tfrac{\xi}{L}) \varphi(g_0n(\xi)a(t)) \dd \xi = \psi(0)\int_{\Gamma' \backslash G^d} \varphi(g) \dd \mu(g),
\end{equation}
where $\mu$ is the $G^d$-invariant probability measure on $\Gamma' \backslash G^d$.
We now use Theorem~\ref{thm:escapeofmass} (proved below) to extend \eqref{eq:equidistribution} to the unbounded theta function.
We specialize to $d = 3$.

We let $\cC_y = \{g \in \Gamma^3 \backslash G^3 : \alpha_3(g) \geq y\}$, and we let $\chi_y$ be a continuous function on $\Gamma' \backslash G^3$ satisfying $\mathbbm{1}_{\cC_{2y}} \leq \chi_y \leq \mathbbm{1}_{\cC_y}$.
By theorems \ref{thm:thetaasymptotic} and \ref{thm:escapeofmass}, we have
\begin{multline}
  \label{eq:tailestimate}
  \tfrac{1}{L} \int_{|\xi| \geq \e^{-(1 + \eta)t}} |\widetilde\Theta_F(1, g_0n(\xi) a(t))| \, \chi_y(g_0n(\xi)g(t)) \dd \xi \\
  \leq \tfrac{1}{L} y^{1 - \beta} \int_{|\xi| \geq \epsilon \e^{-t}} \alpha_d(g_0n(\xi) a(t))^\beta \dd \xi \ll y^{1-\beta}( 1 + \tfrac{1}{L} M^{(3\beta -2) (\frac{\kappa}{\kappa + 1} + \eta)})
\end{multline}
for some $\beta > 1$.

On the other hand, $|\Theta(h, g)|^2(1 - \chi_y(g)) \ll y$, so \eqref{eq:equidistribution} implies that
\begin{equation}
\begin{split}
  \label{eq:equidistribution1}
&  \tfrac{1}{L} \int_{\e^{-(1 + \eta) t} \leq |\xi| \leq L} \psi(\tfrac{1}{L} \xi) \widetilde\Theta_F(1, g_0n(\xi)a(t)) (1 - \chi_y(g_0n(\xi)a(t)))\dd \xi \\
&  = \tfrac{1}{L} \int_{|\xi| \leq L} \psi(\tfrac{1}{L} \xi) \widetilde\Theta_F(1, g_0n(\xi)a(t)) (1 - \chi_y(g_0n(\xi)a(t)))\dd \xi + O_\psi(\tfrac{1}{L}y\e^{-t})\\
&  \underset{t\to\infty}{\to} \psi(0) \int_{\Gamma' \backslash G^d} \widetilde\Theta_F(1, g) ( 1 - \chi_y(g)) \dd \mu(g) \\
&  = \psi(0) \int_{\Gamma_d \backslash G^d} \widetilde\Theta_F(1, g) \dd \mu(g) + O(y^{-1}),
\end{split}
\end{equation}
where the last line follows from $\mu(\cC(y) \setminus \cC(2y)) \ll y^{-2}$. 

Combining this with \eqref{eq:tailestimate}, we obtain
\begin{equation}
  \label{eq:ratner1}
  \lim_{t \to \infty} \tfrac{1}{L} \int_{\epsilon \e^{-t} \leq |\xi| \leq L} \psi(\tfrac{1}{L} \xi) \widetilde\Theta_F(1, g_0n(\xi)a(t)) \dd \xi \\
  = \psi(0) \int_{\Gamma' \backslash G^d} \widetilde\Theta_F(1, g) \dd\mu(g). 
\end{equation}
Together with Lemma \ref{lem:thetaint} below, this gives the second term in \eqref{eq:main2}.

\begin{lem}
  \label{lem:thetaint}
  For $F\in \cS(\R^{2d})$, $d\geq 1$,
  \begin{equation}
    \label{eq:thetaint}
    \int_{\Gamma' \backslash G^d} \widetilde\Theta_F(1, g) \dd \mu(g) = \sum_{\sigma\in D_{2h}} \int_{\R^d} F(x,\sigma(x)) \dd x.
  \end{equation}
\end{lem}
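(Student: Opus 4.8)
The plan is to compute the integral $\int_{\Gamma'\backslash G^d}\widetilde\Theta_F(1,g)\,\dd\mu(g)$ by unfolding it to a sum over $\Z^{2d}$ and using the definition of $\widetilde\Theta_F$ via the representation $R_{d,d}$. First I would recall from \eqref{eq:thetadefdd} that $\widetilde\Theta_F(1,g)=\sum_{m\in\Z^{2d}}[R_{d,d}(g)F](m)$, and that $R_{d,d}(g)=R_d(g)\otimes[CR_d(g)C]$ is an honest unitary representation of $G^d$. Writing $F$ as an $L^2$-limit of finite sums $\sum f_i\otimes\overline{g_i}$, it suffices to treat $F=f\otimes\overline{g}$, in which case $\widetilde\Theta_F(1,g)=\Theta_f(1,g)\overline{\Theta_g(1,g)}$ and the integral becomes the inner product $\langle\Theta_f,\Theta_g\rangle_{L^2(\Gamma'\backslash G^d)}$. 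This is a Rankin--Selberg-type integral of Siegel theta series attached to a signature $(d,d)$ form, whose value is classically the pairing of $f$ and $g$ against the orthogonal group of the form, which in the diagonal case is exactly the finite group $D_{2h}$ of sign changes.

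The key steps, in order, are: (1) reduce to $F=f\otimes\overline g$ by density and continuity of the bounded linear functional $F\mapsto\int\widetilde\Theta_F$, using the bound $|\widetilde\Theta_F(1,g)|\ll_F\alpha_d(g)$ from \eqref{eq:thetaasymp123} together with integrability of $\alpha_d$ against $\mu$ (which follows from $\mu(\cC_y)\ll y^{-2}$, hence $\alpha_d\in L^{2-\epsilon}$, enough for a Sobolev-type control of $F$); (2) unfold $\int_{\Gamma'\backslash G^d}\Theta_f(1,g)\overline{\Theta_g(1,g)}\,\dd\mu(g)$ using automorphy (Theorem~\ref{thm:thetaautomorphy}): one of the two sums, say over $n\in\Z^{2d}$ from $\overline{\Theta_g}$, decomposes into $\Gamma'$-orbits on the isotropic vectors plus the zero orbit, and the stabilizer computation shows that only the ``diagonal'' contribution survives after integrating over the now-unfolded fundamental domain; (3) carry out the remaining integral over $G^d$ (or over $\R^d$ after an Iwasawa/Schr\"odinger computation as in Proposition~\ref{prop:oscillator} and \eqref{eq:thetaexdd}), identifying the surviving orbit representatives with the elements $\sigma\in D_{2h}$ acting by sign changes on the $d$ coordinates, which yields $\sum_{\sigma\in D_{2h}}\int_{\R^d}f(x)\overline{g(\sigma(x))}\,\dd x$; (4) reassemble by bilinearity and density to recover $\sum_{\sigma\in D_{2h}}\int_{\R^d}F(x,\sigma(x))\,\dd x$.

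An alternative and perhaps cleaner route for step (2)--(3) avoids explicit unfolding: since $\widetilde\Theta_F(1,\cdot)$ and the constant function are both in $L^2(\Gamma'\backslash G^d)$ and $\widetilde\Theta_F(h,\cdot)$ transforms under the Heisenberg group $H_d$ in a way that picks out a single Fourier mode, one can compute $\int\widetilde\Theta_F(1,g)\,\dd\mu(g)$ as a limit of $\int\widetilde\Theta_F(1,g_0n(\xi)a(t))\,\dd\xi$ type averages and read off the answer from the asymptotics in Theorem~\ref{thm:thetaasymptotic}; but this is essentially circular given how the lemma is used, so I would stick with the direct unfolding. It is also worth noting that the case $d=1$ of this identity is the classical Siegel--Weil formula for binary theta series and can be cited; the general $d$ follows since $R_{d,d}$ is a tensor product, so $\widetilde\Theta_F$ for $F=\bigotimes_j f_j\otimes\overline{g_j}$ factors as a product of $d$ one-dimensional theta integrals, each contributing a factor summing over $\{\pm1\}$, and the product of $d$ copies of $\{\pm1\}$ is precisely $D_{2h}$.

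The main obstacle I anticipate is step (2): justifying the unfolding rigorously despite the fact that $\widetilde\Theta_F$ is \emph{not} bounded on $\Gamma'\backslash G^d$, so Fubini/dominated convergence must be applied with care. The cleanest fix is to truncate by the cusp function $\chi_y$ as in \eqref{eq:tailestimate}--\eqref{eq:equidistribution1}, perform the unfolding on the compact part $1-\chi_y$ where everything is absolutely convergent, and then let $y\to\infty$ using $\mu(\cC_y)\ll y^{-2}$ and the $\alpha_d$-bound to kill the tail; the combinatorial heart — that the only $\Gamma^d$-orbits of pairs $(m,n)$ contributing are those with $m=\sigma(n)$ for some sign-change $\sigma$, equivalently $Q(m,n)=0$ with $m,n$ ``proportional coordinatewise up to sign'' — is then a finite check using that the isotropic vectors of the diagonal form $\sum x_j(m_j^2-n_j^2)$ with $x_i/x_j$ irrational force $m_j^2=n_j^2$ for all $j$ (using $q\transp x\neq0$ for nonzero $q\in\Z^d$, as in Lemma~\ref{lem:density}).
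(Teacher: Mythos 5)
Your primary route (Rankin--Selberg unfolding of $\langle\Theta_f,\Theta_g\rangle$) differs from the paper's, which instead computes the integral by equidistribution of the \emph{full} $d$-parameter expanding horosphere. The paper's argument is short: by mixing of $a(t)$ (Margulis' thickening trick),
\begin{equation*}
\int_{\Gamma'\backslash G^d}\widetilde\Theta_F(1,g)\,\dd\mu(g)=\lim_{t\to\infty}2^{-d}\int_{[0,2]^d}\widetilde\Theta_F(1,\tilde n(\xi)a(t))\,\dd\xi,
\end{equation*}
and then the explicit horosphere formula \eqref{eq:thetaexdd} turns the right side into a Riemann sum: the $\xi$-integral annihilates every pair $(m_1,m_2)$ with $m_{1,j}^2\neq m_{2,j}^2$ for some $j$, leaving $\e^{-dt}\sum_{m,\sigma}F(m\e^{-t},\sigma(m)\e^{-t})\to\sum_\sigma\int_{\R^d} F(x,\sigma(x))\,\dd x$. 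Your second paragraph gestures at something close to this but dismisses it as ``essentially circular''; that concern is misplaced. You are picturing averages along the \emph{one-parameter} orbit $g_0n(\xi)a(t)$, which would indeed reuse the Ratner-type equidistribution that this lemma is meant to feed. The paper instead averages over the entire $d$-dimensional horospherical subgroup $\tilde n(\xi)$, whose equidistribution is a consequence of mixing alone and is logically independent of anything downstream of the lemma. There is no circularity.

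On your primary route: the reduction to $F=f\otimes\overline g$, and better still the factorization over $j=1,\dots,d$ for pure tensors (reducing to $d=1$ since $\Gamma'=(\Gamma_{2,2})^d$ and $\mu$ is product Haar), are both sound ideas. The gap is step (2). You propose to ``unfold'' one of the theta sums by decomposing $\Z^{2d}$ into $\Gamma'$-orbits, as one would for an Eisenstein or Poincar\'e series. But the automorphy of $\widetilde\Theta_F$ under $\Gamma'$ established in Theorem~\ref{thm:thetaautomorphy} is a \emph{theta transformation law} coming through the Shale--Weil cocycle; it is not the statement that $\gamma\in\Gamma'$ permutes the terms $[W(h)R_{d,d}(g)F](m)$ of the defining sum. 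There is no orbit decomposition of $\Z^{2d}$ that directly unfolds the $\Gamma'\backslash G^d$-integral, so the argument as stated does not go through. (What one can legitimately do is expand $\widetilde\Theta_F$ in a Fourier series along the cusp, which is precisely what the horosphere-limit computation does in disguise; or invoke a genuine Siegel--Weil/doubling identity, which requires more structure than you set up.) The truncation-by-$\chi_y$ fix addresses convergence but not this structural issue. I would recommend adopting the horosphere-limit argument: it is elementary, it handles the unboundedness of $\widetilde\Theta_F$ cleanly through the explicit formula, and it makes the appearance of $D_{2h}$ transparent via the observation that $\int_{[0,2]^d}\e\big(\tfrac12\sum_j\xi_j(m_{1,j}^2-m_{2,j}^2)\big)\,\dd\xi$ vanishes unless $m_{2,j}=\pm m_{1,j}$ for every $j$.
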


\begin{proof}
  As $\tilde n$ parametrizes the full horospherical subgroup associated to $a(t)$, it follows from the mixing property of $a(t)$ via standard arguments (e.g. Margulis' thickening trick) that
  \begin{equation}
    \label{eq:fullhorosphere}
    \int_{\Gamma' \backslash G^d} \widetilde\Theta_F(1, g) \dd \mu(g) = \lim_{t\to \infty} 2^{-d} \int_{[0,2]^d} \Theta_F(1, \tilde n(\xi)a(t)) \dd \xi.
  \end{equation}
  From \eqref{eq:thetaex} we have
  \begin{equation}
  \begin{split}
    \label{eq:horosphereaverage}
    & 2^{-d} \int_{[0,2]^d} \widetilde\Theta_F(1, \tilde n(\xi)a(t)) \dd \xi \\
    & = 2^{d}\e^{-dt}\sum_{m_1, m_2 \in \Z^d} F(m_1\e^{-t}, m_2e^{-t}) \int_{[0,2]^d} \e(\tfrac{1}{2} \sum_{1\leq j\leq d} \xi_j (m_{1,j}^2 - m_{2,j}^2)) \dd \xi \\
    & = \e^{-dt}\sum_{m \in \Z^d}\sum_{\sigma\in D_{2h}} F(m\e^{-t},\sigma(m) e^{-t}).
  \end{split}
  \end{equation}
  As $t\to \infty$ this Riemann sum converges to the right side of \eqref{eq:thetaint}.
\end{proof}

\section{Escape of mass}
\label{sec:escape}

In this section we prove Theorem \ref{thm:escapeofmass} assuming the avoindance estimate (Proposition~\ref{prop:avoidanceMargulis}) proved in Section \ref{sec:avoidance}.

\subsection{Local contraction}
\label{sec:loccontraction}

The main result of this section is Lemma \ref{lem:phicontraction}, which establishes a local contraction property of the height functions $\rho_k$, compare to \cite[Lemma~5.1]{KleinbockMargulis1999}.
In our setting, we modify these height functions to also measure the distance to an exceptional set on which $\rho_k^{-\beta}$ does not contract for $\beta > \tfrac{2}{k}$.
In this way, we obtain contraction for the modified functions for $1 < \beta < 2$.

The distance to the exceptional set is controlled by the following functions.
For $g =
\left(
\begin{pmatrix}
  a_j & b_j \\
  c_j & d_j
\end{pmatrix}
\right)_{\!\! j=1,2,3} \in G^3$ we define
\begin{equation}
  \label{eq:Delta2def}
  \Delta_2(g) = \min_{j = 1,2,3} \max_{i \neq j} \min ( 1, | \tfrac{d_i}{c_i} - \tfrac{d_j}{c_j}|)
\end{equation}
and
\begin{equation}
  \label{eq:Deltadef}
  \Delta_3(g) = \min_{\!\! j=1,2,3} \prod_{i \neq j} \min(1, |\tfrac{d_i}{c_i} - \tfrac{d_j}{c_j}|),
\end{equation}
where here and in what follows, if $c_ic_j = 0$, we take $1$ in corresponding minimum.

\begin{lem}
  \label{lem:measbound}
  Let $g = \left( \begin{pmatrix}    r_j & -s_j \\
    s_j & r_j
  \end{pmatrix}
  \right)_{\!\! j=1,2,3} \in \SO(2)^3$.
  For any $s > 0$, the measure of the set of $\xi \in [-1,1]$ such that $\rho_3(gn(\xi)a(s)) \leq \delta$ is $O(\min( \delta^{\frac{1}{3}}\e^{-s} , \delta \Delta_2 (g)^{-1} \e^{-s}, \delta \Delta_3(g)^{-1} \e^{-3s}))$. 
\end{lem}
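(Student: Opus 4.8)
The plan is to reduce the three-dimensional estimate to a one-variable calculation by first noting that $\rho_3(gn(\xi)a(s)) = \prod_{j=1,2,3}\rho(g_jn(\xi)a(s))$ when all three factors are small (cf. Lemma~\ref{lem:shortestvector}(i); if some $g_jn(\xi)a(s)$ is not deep in the cusp then $\rho_3$ is bounded below and the set in question is empty for small $\delta$, so we may assume we are in the cusp in each coordinate). Writing $g_j = k(\theta_j)$ with $\cos\theta_j = r_j$, $\sin\theta_j = s_j$, one computes explicitly that $\rho(g_jn(\xi)a(s))^2 = (s_j\e^{-s})^2 + (r_j + s_j\xi)^2\e^{2s}$. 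Thus, up to the harmless case $s_j = 0$, $\rho(g_jn(\xi)a(s)) \asymp \e^{-s} + |s_j|\,|\xi - \xi_j|\,\e^{s}$ where $\xi_j = -r_j/s_j = -d_j/c_j$ is the value of $\xi$ at which the $j$-th orbit is highest in the cusp; note $|\xi_j - \xi_i| = |\tfrac{d_i}{c_i} - \tfrac{d_j}{c_j}|$ up to the cutoff at $1$, which is exactly what $\Delta_2$ and $\Delta_3$ measure.

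Next I would estimate the measure $\mu(\delta) := |\{\xi\in[-1,1] : \prod_j (\e^{-s} + |\xi - \xi_j|\e^{s}) \leq \delta\}|$ in three ways, each giving one term in the claimed bound. For the first bound $O(\delta^{1/3}\e^{-s})$: if the product is $\leq\delta$ then at least one factor is $\leq \delta^{1/3}$, so $\e^{-s} + |\xi - \xi_j|\e^s \leq \delta^{1/3}$ for some $j$, forcing $|\xi - \xi_j| \leq \delta^{1/3}\e^{-s}$; taking the union over $j=1,2,3$ gives measure $\ll \delta^{1/3}\e^{-s}$. For the second bound $O(\delta\Delta_2(g)^{-1}\e^{-s})$: pick the index $j$ realizing the outer minimum in $\Delta_2$, so that for the two indices $i\neq j$ we have $\max_{i\neq j}|\xi_i - \xi_j|\gtrsim \Delta_2(g)$ (after the cutoff); hence at any $\xi$ at least one of the two factors indexed by $i\neq j$, say the one for $i$, has $|\xi - \xi_i|\gtrsim \Delta_2(g)$, giving that factor a lower bound $\gg \Delta_2(g)\e^s$, and the remaining factor for index $j$ must then be $\ll \delta\,\Delta_2(g)^{-1}\e^{-s}$, confining $\xi$ to an interval of that length. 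For the third bound $O(\delta\Delta_3(g)^{-1}\e^{-3s})$: drop all the additive $\e^{-s}$ terms, so $\prod_j|\xi - \xi_j|\e^{3s}\leq \delta$; restrict to the interval where $\xi$ is closest to, say, $\xi_{j_0}$ (the index realizing the outer min in $\Delta_3$), where the other two factors are $\gtrsim$ the pairwise gaps whose product is $\gg \Delta_3(g)$, leaving $|\xi - \xi_{j_0}|\ll \delta\Delta_3(g)^{-1}\e^{-3s}$. Taking the minimum of the three bounds yields the lemma.

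The main obstacle I anticipate is bookkeeping the cutoffs and degenerate configurations cleanly: handling the cases $c_j = 0$ or $s_j = 0$ (where the orbit does not approach the cusp, or $\xi_j$ is undefined/at infinity) uniformly with the generic case, and dealing with the $\min(1,\cdot)$ truncations in $\Delta_2,\Delta_3$ — in particular making sure that when two highest-points $\xi_i,\xi_j$ are far apart the truncated quantity still correctly lower-bounds the relevant factor on the interval of interest, and that clustering of two or three of the $\xi_j$'s is what genuinely forces the weaker $\delta^{1/3}$ rate. A secondary subtlety is justifying that $\rho_3 = \prod_j \rho(g_j n(\xi)a(s))$ on the relevant range, i.e. that we are simultaneously in the cusp in all three coordinates whenever $\rho_3\leq\delta$ with $\delta$ small; this follows from Lemma~\ref{lem:shortestvector}(ii), which bounds each $\rho(\gamma_jg_j)$, together with Lemma~\ref{lem:rhotransbounds} to control how $\rho$ changes under the $n(\xi)a(s)$ translate, but it needs to be spelled out.
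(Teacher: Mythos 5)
The structure of your argument—lower-bounding $\rho_3(gn(\xi)a(s))$ by a product of linear factors $|\xi-\xi_j|$ centred at the roots $\xi_j = -r_j/s_j$, and then estimating the sub-level set in three ways—matches the paper's proof. However, your second bound ($O(\delta\Delta_2(g)^{-1}\e^{-s})$) has a genuine gap, and the source is that you fix $j$ as the index realizing the outer minimum in $\Delta_2$, whereas the correct choice (made in the paper) is to let $j$ depend on $\xi$: take $j$ to be the index (among those with $s_j^2 > \tfrac{1}{10}$) nearest to $\xi$, i.e.\ minimizing $|\xi+\tfrac{r_j}{s_j}|$. Your claim that ``at any $\xi$ at least one of the two factors indexed by $i\neq j$ has $|\xi-\xi_i|\gtrsim\Delta_2(g)$'' is false: take $\xi_1=\xi_2=0$, $\xi_3=a\in(0,1)$, so $\Delta_2(g)=a$ with minimizer $j^*=3$; at $\xi=0$ both $|\xi-\xi_1|=|\xi-\xi_2|=0$. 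The triangle inequality only gives that one of $|\xi-\xi_{i_1}|$, $|\xi-\xi_{j^*}|$ is $\gtrsim\Delta_2$, and the large factor can be $j^*$ itself. Moreover, even after lower-bounding one factor by $\gg\Delta_2(g)\e^s$, you write ``the remaining factor for index $j$ must then be $\ll\delta\Delta_2(g)^{-1}\e^{-s}$''—but there are \emph{two} remaining factors; you need the extra input that each factor is $\gg\e^{-s}$ before the bound on the factor nearest $\xi$ follows. With $j$ chosen nearest $\xi$, one gets $\rho_3\gg\e^s\Delta_2(g)\,|\xi+\tfrac{r_j}{s_j}|$ cleanly (one factor $\gg\Delta_2\e^s$ from the separation, one factor $\gg\e^{-s}$ trivially, and the $j$-factor $\gg\e^s|\xi+\tfrac{r_j}{s_j}|$), and then the union over the at most three possible values of $j$ gives the measure bound.

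Two smaller points. First, your opening claim that ``$\rho_3(gn(\xi)a(s))=\prod_j\rho(g_jn(\xi)a(s))$ when all three factors are small'' is a misreading: for $d=l=3$ the definition \eqref{eq:rholdef} has only one subset $J=\{1,2,3\}$, so $\rho_3(g)=\prod_j\rho(g_j)$ identically—no cusp condition is needed, and Lemma~\ref{lem:shortestvector}(i) concerns $\alpha_l$, not $\rho_l$; you are conflating the two. Second, in the $\Delta_3$ bound you again fix $j_0$ as the $\Delta_3$-minimizer before restricting to the interval where $\xi$ is closest to $\xi_{j_0}$; that only treats one of the three intervals. But this one is benign since $\prod_{i\neq j}\min(1,|\xi_i-\xi_j|)\geq\Delta_3(g)$ holds for \emph{every} $j$ (it is the minimum over $j$), so the argument carries over verbatim to whichever $\xi_j$ is nearest. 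The same repair fixes the $\Delta_2$ bound once you note that $\max_{i\neq j}\min(1,|\xi_i-\xi_j|)\geq\Delta_2(g)$ holds for every $j$.
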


\begin{proof}
  We note that since $r_j^2 + s_j^2 = 1$, if $s_j^2 \leq \tfrac{1}{10}$, we have $|r_j + s_j \xi| \asymp 1$ for $\xi \in [-1,1]$.
  Therefore
  \begin{equation}
  \begin{split}
    \label{eq:rhokgna}
    \rho_3 (gn(\xi) a(s)) & = \prod_{j = 1}^3 |(\e^{-s} s_j, \e^s(r_j + s_j \xi))| \\
    &\gg \e^{3s} \prod_{s_j^2 > \tfrac{1}{10}} |r_j + s_j \xi| \gg \e^{3s} \prod_{s_j^2 > \tfrac{1}{10}} |\xi + \tfrac{r_j}{s_j}|,
  \end{split}
  \end{equation}
  and we may assume that product is nonempty since otherwise the lemma is trivial.

  Let $j$ be so that $|\xi + \tfrac{r_j}{s_j}|$ is minimal among $j$ so that $s_j^2 > \tfrac{1}{10}$.
  If $|\tfrac{r_j}{s_j}| > 2$, then the lemma is again trivial.
  It follows that (\ref{eq:rhokgna}) is
  \begin{equation}
    \label{eq:rho3cubelowerbound}
    \gg \e^{3s} |\xi + \tfrac{r_j}{s_j}|^3. 
  \end{equation}
  We also have that (\ref{eq:rhokgna}) is
  \begin{equation}
    \label{eq:rho3linearlowerbound}
    \gg \e^{3s} |\xi + \tfrac{r_j}{s_j}| \prod_{\substack{s_i^2 > \tfrac{1}{10} \\ i \neq j}} | \tfrac{r_i}{s_i} - \tfrac{r_j}{s_j} | \gg \e^{3s} \Delta_3(g) |\xi + \tfrac{r_j}{s_j}|, 
  \end{equation}
  where the last bound follows from $| \tfrac{r_i}{s_i} - \tfrac{r_j}{s_j} |\gg 1$ if $s_i^2 \leq \tfrac{1}{10}$.
  
  The first and third bounds claimed in the lemma follow from (\ref{eq:rho3cubelowerbound}) and (\ref{eq:rho3linearlowerbound}).
  For the second claimed bound, we aim to show
  \begin{equation}
    \label{eq:rho3quadlowerbound1}
    \rho_3(gn(\xi)a(s)) \gg \e^s \Delta_2(g) |\xi + \tfrac{r_j}{s_j}|
  \end{equation}
  with $j$ be as before.  

  Without loss of generality we assume $j = 3$.
  If both $s_1^2 \leq \tfrac{1}{10}$ and $s_2^2 \leq \tfrac{1}{10}$, then $\Delta_2(g) = \Delta_3(g) = 1$, so (\ref{eq:rho3quadlowerbound1}) follows from (\ref{eq:rho3linearlowerbound}).
  If $s_1^2 \leq \tfrac{1}{10} < s_2^2$, then from the left of (\ref{eq:rhokgna}) we have
  \begin{equation}
    \label{eq:rho3quadlowerbound4}
    \rho_3(gn(\xi)a(s)) \geq \e^s |s_2| |r_1 + s_1 \xi| |r_3 + s_3 \xi| \gg \e^s |\xi + \tfrac{r_3}{s_3}|,
  \end{equation}
  and (\ref{eq:rho3quadlowerbound1}) follows similarly if $s_2^2 \leq \tfrac{1}{10} < s_1^2$.

  Finally, if both $s_1^2, s_2^2 \leq \tfrac{1}{10}$, we assume without loss of generality that $|\tfrac{r_1}{s_1} - \tfrac{r_3}{s_3}| \geq |\tfrac{r_2}{s_2} - \tfrac{r_3}{s_3}|$.
  We have
  \begin{equation}
    \label{eq:rho3quadlowerbound5}
    \rho_3(gn(\xi)a(s)) \geq \e^s |s_2| |r_1 + s_1 \xi| |r_3 + s_3 \xi| \gg \e^s |\tfrac{r_1}{s_1} - \tfrac{r_3}{s_3}||\xi + \tfrac{r_3}{s_3}|,
  \end{equation}
  so (\ref{eq:rho3quadlowerbound1}) follows. 
\end{proof}

\begin{lem}
  \label{lem:rhocontraction}
  For any $s > 0$, $\beta > 1$, and $g \in G$, we have
  \begin{equation}
    \label{eq:loccontraction}
    \int_{-1}^1 \rho_3 (g n(\xi) a(s))^{-\beta} \dd \xi 
    \ll \min (\e^{(3\beta -2)s}, \Delta_2(g)^{-1} \e^{(3\beta - 4)s}, \Delta_3(g)^{-1} \e^{(3\beta -6)s}) \rho_k(g)^{-\beta}.
  \end{equation}
\end{lem}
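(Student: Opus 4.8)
The strategy is to deduce the integral bound from the distribution estimate in Lemma \ref{lem:measbound} by the standard "layer-cake" / dyadic-decomposition argument, after reducing from a general $g \in G$ to an element of $\SO(2)$. First I would write $g = \gamma g'$ with $\gamma \in \Gamma$ chosen so that $g' = \gamma g \in \cD$; since $\rho_3$ is defined as a minimum over $\Gamma^3$ it is left-$\Gamma^3$-invariant, so $\rho_3(gn(\xi)a(s)) = \rho_3(\gamma g n(\xi) a(s))$, and writing $g' = n(u) a(\log v^{1/2}) k(\theta)$ in Iwasawa coordinates one absorbs the $n(u)$ into a shift of $\xi$ (up to the harmless factor coming from $\rho_k(g)$ on the right-hand side, which accounts for the normalization) and the $a$-part into a rescaling of $s$ and $\xi$, reducing to the case $g = k(\theta) \in \SO(2)$, i.e. to Lemma \ref{lem:measbound}. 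The bookkeeping for how the $\rho_k(g)^{-\beta}$ factor enters here is the kind of routine-but-fiddly step I would want to state carefully but not belabor; note also that $\Delta_2, \Delta_3$ as defined in \eqref{eq:Delta2def}, \eqref{eq:Deltadef} depend only on the ratios $d_i/c_i$, which are left-$\Gamma^3$-invariant up to the relevant shifts, so they behave correctly under this reduction.

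Granting the reduction to $g \in \SO(2)^3$ (applied coordinatewise, but here all three factors are powers of the single $g \in G$, so really $g = k(\theta)$ embedded diagonally), the core computation is: for a nonnegative function $X$ on $[-1,1]$ with $\bigl|\{\xi : X(\xi) \le \delta\}\bigr| \le \Phi(\delta)$ for all $\delta > 0$, bound $\int_{-1}^1 X(\xi)^{-\beta}\, \dd\xi$. Writing $\int_{-1}^1 X^{-\beta} = \beta \int_0^\infty \lambda^{\beta - 1} |\{X < \lambda^{-1}\}|\, \dd\lambda$ (plus the trivial contribution where $X \gtrsim 1$, which is $O(1)$ and absorbed), and splitting the $\lambda$-integral dyadically, one gets $\int_{-1}^1 X^{-\beta} \ll \sum_{k \ge 0} 2^{k\beta} \Phi(2^{-k})$ up to the point where $\Phi$ saturates at $2$. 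With the three bounds of Lemma \ref{lem:measbound}, $\Phi(\delta) = O(\min(\delta^{1/3} \e^{-s}, \delta\, \Delta_2^{-1} \e^{-s}, \delta\, \Delta_3^{-1} \e^{-3s}))$; because $\beta > 1 > 1/3$, the first sum $\sum 2^{k\beta} 2^{-k/3}\e^{-s}$ is geometrically increasing and dominated by its top term, which occurs when the bound saturates at $\delta \asymp \e^{3s}$ (equivalently $2^{-k} \asymp \e^{3s}$), giving $\ll \e^{(3\beta - 2)s}$; since $\beta > 1$, the second sum $\sum 2^{k\beta}2^{-k}\Delta_2^{-1}\e^{-s}$ and the third $\sum 2^{k\beta}2^{-k}\Delta_3^{-1}\e^{-3s}$ are likewise increasing and saturate at the same scale, yielding $\Delta_2^{-1}\e^{(3\beta-4)s}$ and $\Delta_3^{-1}\e^{(3\beta-6)s}$ respectively. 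Taking the minimum of the three and reinstating the $\rho_k(g)^{-\beta}$ factor from the reduction gives \eqref{eq:loccontraction}.

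The main obstacle I anticipate is not the layer-cake estimate itself — that is essentially mechanical once $\beta > 1$ is used to control the geometric sums — but rather making the reduction to $\SO(2)$ fully honest: tracking how the Iwasawa $na$-part of $g$ (or the $\Gamma$-representative) rescales $s$, $\xi$, $\Delta_2(g)$, $\Delta_3(g)$, and the output, and checking that these transformations are compatible with the three regimes in Lemma \ref{lem:measbound} (in particular that shifting $\xi$ by a bounded amount and rescaling by a bounded factor does not spoil the $[-1,1]$ domain of integration up to constants). One should also confirm the edge case $c_i c_j = 0$ — handled by the convention that the corresponding factor is $1$ — is consistent on both sides, and that when the relevant product over $\{j : s_j^2 > 1/10\}$ in the proof of Lemma \ref{lem:measbound} is empty the inequality \eqref{eq:loccontraction} holds trivially with $\rho_3 \gg \e^{3s}\rho_k(g)$ pointwise.
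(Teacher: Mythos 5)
The layer-cake / dyadic decomposition in your second paragraph matches the paper's argument and is correct. The problem is in the reduction, which you yourself flag as the obstacle, and the mechanism you propose there would not work.

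First, $\rho_3$ is \emph{not} left-$\Gamma^3$-invariant. You write ``since $\rho_3$ is defined as a minimum over $\Gamma^3$ it is left-$\Gamma^3$-invariant,'' but \eqref{eq:rholdef} takes a minimum over subsets $J\subset\{1,\dots,d\}$, not over $\Gamma^d$; it is $\alpha_l$, defined in \eqref{eq:alphaldef} by maximizing $\rho_l^{-1}$ over $\Gamma^d$, that is $\Gamma^d$-invariant. So the first step $g \mapsto \gamma g \in \cD$ does not preserve the left-hand side of \eqref{eq:loccontraction} and should be dropped entirely. Likewise $\Delta_2,\Delta_3$ are not $\Gamma^3$-invariant.

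Second, the Iwasawa part cannot be ``absorbed into a shift of $\xi$ and a rescaling of $s$.'' Writing $g = g'k$ with $g' \in NA$ (upper triangular) and $k\in \SO(2)^3$, you have $gn(\xi)a(s) = g'\,k\,n(\xi)a(s)$ with $g'$ sitting on the \emph{left} of $k$ and $n(\xi)a(s)$ on the \emph{right}. There is no commutation that turns this into $k\,n(\xi')a(s')$; the groups $NA$ and $K$ do not commute, so a shift/rescale of the parameters on the right cannot eliminate $g'$ on the left. What does work — and is what the paper uses — is the exact multiplicativity
\[
\rho_3(g'h) = \rho_3(g')\,\rho_3(h) \qquad \text{for any upper-triangular } g',
\]
which follows immediately from $\rho(g'h) = |a|^{-1}\rho(h) = \rho(g')\rho(h)$ in each coordinate when $g' = \bigl(\begin{smallmatrix}a & b\\ 0 & a^{-1}\end{smallmatrix}\bigr)$. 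Combined with $\rho_3(k)=1$ for $k\in\SO(2)^3$, this gives $\rho_3(gn(\xi)a(s)) = \rho_3(g)\,\rho_3(kn(\xi)a(s))$, so the factor $\rho_3(g)^{-\beta}$ comes out of the integral with no change to $\xi$, $s$, or the domain $[-1,1]$. Similarly $\Delta_2,\Delta_3$ depend only on the ratios $d_j/c_j$, which are unchanged under left multiplication by upper-triangular elements (both $c_j$ and $d_j$ get multiplied by $a_j^{-1}$), so $\Delta_i(g) = \Delta_i(k)$. With this reduction in hand, the rest of your argument — the bounds $\e^{-3s}\ll\rho_3(kn(\xi)a(s))\ll\e^{3s}$, the dyadic decomposition, and the three geometric sums dominated by their top terms using $\beta > 1$ — is correct and is exactly what the paper does. (Incidentally, the $\rho_k(g)^{-\beta}$ on the right-hand side of \eqref{eq:loccontraction} is a typo for $\rho_3(g)^{-\beta}$.)
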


\begin{proof}
  We note if $g' = \left(
  \begin{pmatrix}
    * & * \\
    0 & * 
  \end{pmatrix}\right)_{\!\! j=1,2,3}$, then $\rho_3(g'g) = \rho_3(g')\rho_3(g)$, so we may assume $g =
  \left(
  \begin{pmatrix}
    r_j & -s_j \\
    s_j & r_j
  \end{pmatrix}\right)_{\!\! j=1,2,3}
  \in \SO(2)^3$, where $\rho_3(g) = 1$.
  We note that since $r_j^2 + s_j^2 = 1$, it follows that if $s_j^2 \leq \tfrac{1}{10}$, then $|r_j + s_j \xi| \gg 1$ for all $\xi \in [-1,1]$.
  It follows that $\rho_3(gn(\xi)a(s)) \gg \e^{-(3 - 2k)s} \geq \e^{-3s}$, where $k$ is the number of $j$ such that $s_j^2 \leq \tfrac{1}{10}$.
  We also note that $\rho_3(gn(\xi)a(s)) \ll \e^{3s}$. 

  We decompose the region of integration according to $\xi$ such that $\delta < \rho_3(gn(\xi) a(s)) \leq 2 \delta$ ranges dyadically from $\e^{-3s}$ to $\e^{3s}$ ($\delta = c 2^j \e^{-3s}$, $0 \leq j \ll s$, and $c > 0$ a small constant).
  We obtain
  \begin{equation}
  \begin{split}
    \label{eq:dyadicdecomp}
    \int_{-1}^1 \rho_3 (g n(\xi) a(s))^{-\beta} \dd \xi 
&    \ll \sum_{\substack{ \delta \gg \e^{-3s} \\ \mathrm{dyadic}}} \delta^{-\beta} \mathrm{meas}\{ \xi \in [-1,1]: \rho_3(gn(\xi)a(s)) \leq \delta\} \\
 &   \ll \sum_{\substack{ \delta \gg \e^{-3s} \\ \mathrm{dyadic}}} \delta^{-\beta} \min( \delta^{\frac{1}{3}} \e^{-s}, \delta \Delta_2(g)^{-1} \e^{-s}, \delta \Delta_3^{-1} \e^{-3s}) \\
 &   \ll \min (\e^{(3\beta -2)s}, \Delta_2(g)^{-1} \e^{(3\beta - 4)s}, \Delta_3(g)^{-1} \e^{(3\beta -6)s}),
  \end{split}
  \end{equation}
  the second bound following from Lemma \ref{lem:measbound}.
\end{proof}

We set
\begin{equation}
  \label{eq:phidef}
  \phi(g) = \Delta_3(g)^{-1} \rho_3(g)^{-\beta}.
\end{equation}
\begin{lem}
  \label{lem:phicontraction}
  For any $g \in G$, $s > 0$, and $\beta > 1$, we have
  \begin{equation}
    \label{eq:phicontraction}
    \int_{-1}^1 \phi(gn(\xi)a(s)) \dd \xi \ll \e^{-3 (2-\beta) s} \phi(g).
  \end{equation}
\end{lem}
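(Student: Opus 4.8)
The plan is to reduce to $g\in\SO(2)^3$, exploit the fact that $\Delta_3(gn(\xi)a(s))$ is independent of $\xi$, and then feed the three-fold bound of Lemma~\ref{lem:rhocontraction} into an elementary case analysis.

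\textbf{Step 1 (reduction to $\SO(2)^3$).} Left multiplication of an $\SL(2,\R)$-factor by an upper triangular matrix rescales its bottom row by a scalar; hence for $b=(b_j)_j$ upper triangular in each factor one has $\rho_3(bh)=\rho_3(b)\rho_3(h)$ while the slopes $d_j/c_j$ (and the set of $j$ with $c_j=0$) are unchanged, so $\Delta_3(bh)=\Delta_3(h)$ and therefore $\phi(bh)=\rho_3(b)^{-\beta}\phi(h)$. Writing $g=bk$ by the Iwasawa decomposition with $k\in\SO(2)^3$ and applying this identity to $h=k$ and to $h=kn(\xi)a(s)$, the factor $\rho_3(b)^{-\beta}$ cancels from the two sides of \eqref{eq:phicontraction}; so it suffices to treat $g=k\in\SO(2)^3$, where $\rho_3(k)=1$ and $\phi(k)=\Delta_3(k)^{-1}$.

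\textbf{Step 2 ($\Delta_3$ along the orbit).} A direct computation of the bottom row of $k_jn(\xi)a(s)$ shows its slope is $\e^{2s}(\xi+\cot\theta_j)$, so pairwise slope differences equal $\e^{2s}(\cot\theta_i-\cot\theta_j)$, independent of $\xi$; thus $\Delta_3(kn(\xi)a(s))$ is a function of $k$ and $s$ alone and can be pulled out of the $\xi$-integral. Put $\delta_{ij}=\min(1,|\cot\theta_i-\cot\theta_j|)$ (with $\delta_{ij}=1$ when $\sin\theta_i\sin\theta_j=0$), and let $\delta_{(1)}\le\delta_{(2)}\le\delta_{(3)}$ be the sorted triple; then $\Delta_3(k)=\delta_{(1)}\delta_{(2)}$ and $\Delta_2(k)=\delta_{(2)}$, while, since $x\mapsto x\min(\e^{2s},1/x)$ is non-decreasing on $(0,1]$, the sorted order is preserved under translation and $\Delta_3(kn(\xi)a(s))=\delta_{(1)}\delta_{(2)}\,\min(\e^{2s},\delta_{(1)}^{-1})\min(\e^{2s},\delta_{(2)}^{-1})$. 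Write $B=\delta_{(1)}\le A=\delta_{(2)}$ and $E=\e^{2s}$.

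\textbf{Step 3 (bound and conclude).} By Lemma~\ref{lem:rhocontraction} applied to $g=k$ (so $\rho_3(k)=1$),
\[
\int_{-1}^1\rho_3(kn(\xi)a(s))^{-\beta}\,\dd\xi\ \ll\ \e^{(3\beta-6)s}\min\big(E^2,\ E/A,\ 1/(AB)\big),
\]
since $\e^{(3\beta-2)s}=E^2\e^{(3\beta-6)s}$, $\Delta_2(k)^{-1}\e^{(3\beta-4)s}=(E/A)\e^{(3\beta-6)s}$ and $\Delta_3(k)^{-1}\e^{(3\beta-6)s}=(1/AB)\e^{(3\beta-6)s}$. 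Dividing by $\Delta_3(kn(\xi)a(s))=AB\min(E,1/A)\min(E,1/B)$ and using $\phi(k)=(AB)^{-1}$, the asserted inequality \eqref{eq:phicontraction} reduces to the elementary bound
\[
\min\big(E^2,\ E/A,\ 1/(AB)\big)\ \le\ \min(E,1/A)\cdot\min(E,1/B),
\]
which one checks in the three ranges $E\le 1/A$, $1/A\le E\le 1/B$, $E\ge 1/B$: in each, the right-hand side equals precisely whichever term on the left is then minimal.

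The crux is Step~2: one must verify that the modified height $\Delta_3$ is genuinely orbit-invariant and that pairing its reciprocal with the \emph{weakest} of the three estimates in Lemma~\ref{lem:rhocontraction} recovers exactly the gain $\e^{-3(2-\beta)s}$ with no loss — which is what the case split in Step~3 makes precise. The degenerate configurations where one or two of the $\sin\theta_j$ vanish are covered by the same formulas via the convention $\delta_{ij}=1$, and the implied constants depend only on $\beta$ (staying bounded for $\beta$ away from $\tfrac13$, in particular for $1<\beta<2$).
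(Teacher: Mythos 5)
Your proof is correct and follows essentially the same strategy as the paper's: both arguments reduce to $\SO(2)^3$, use the $\xi$-independence of $\Delta_3(gn(\xi)a(s))$ to pull it out of the integral, and then match the three clamping regimes of $\Delta_3$ against the three bounds in Lemma~\ref{lem:rhocontraction}. You merely spell out the reduction step and the case analysis more explicitly than the paper, which records only the three-case formula for $\Delta_3(gn(\xi)a(s))^{-1}\Delta_3(g)$ and invokes Lemma~\ref{lem:rhocontraction} case by case.
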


\begin{proof}
  We observe that 
  \begin{equation}
    \label{eq:kappaas}
    \Delta_3(gn(\xi)a(s))^{-1} \Delta_3(g) =
    \begin{cases}
      \e^{-4s}  & \mathrm{if\ } \Delta_2(g) \leq \e^{-2s} \\
      \e^{-2s} \Delta_2(g)  & \mathrm{if\ }  \Delta_3(g) \leq \e^{-2s} \leq \Delta_2(g) \\
      \Delta_3(g) & \mathrm{if\ } \e^{-2s} \leq \Delta_3(g).
    \end{cases}
  \end{equation}
  Applying the bounds in Lemma \ref{lem:rhocontraction} to each of these cases (in the corresponding order) implies (\ref{eq:phicontraction}). 
\end{proof}

We end this section by recording the following lemma establishing a weaker bound than Lemma~\ref{lem:phicontraction}, but for a larger range of $\beta$ that we omitted from the proof of Lemma~\ref{lem:phicontraction} for the sake of exposition.
\begin{lem}
  \label{lem:cruderhocontraction}
  For any $d\geq 1$, $1\leq k\leq d$, $s > 0$, $\beta > \tfrac{1}{k}$, and $g \in G^d$, we have
  \begin{equation}
    \label{eq:crudeloccontraction}
    \int_{-1}^1 \rho_k (g n(\xi) a(s))^{-\beta} \dd \xi \ll \e^{-(2-\beta k )s} \rho_k(g)^{-\beta}.
  \end{equation}
\end{lem}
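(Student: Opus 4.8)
The plan is to reduce this to the same dyadic decomposition used in Lemma~\ref{lem:rhocontraction}, but tracking only the weakest (cubic-type, or rather $k$-th power) lower bound on $\rho_k$. First I would note that, as in the proof of Lemma~\ref{lem:rhocontraction}, since $\rho_k(g'g) = \rho_k(g')\rho_k(g)$ whenever $g'$ is upper triangular in each coordinate, the left side of \eqref{eq:crudeloccontraction} equals $\rho_k(g)^{-\beta}$ times the same integral with $g$ replaced by its $SO(2)^d$-part; so I may assume $g = \left(\begin{smallmatrix} r_j & -s_j \\ s_j & r_j \end{smallmatrix}\right)_j \in SO(2)^d$ and $\rho_k(g) = 1$, and it suffices to show the integral is $\ll \e^{-(2-\beta k)s}$.

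Next I would record the crude measure estimate: for $s>0$ and $\delta>0$, the set of $\xi \in [-1,1]$ with $\rho_k(gn(\xi)a(s)) \leq \delta$ has measure $O(\delta^{1/k}\e^{-s})$. This is the $k$-variable analogue of the first bound in Lemma~\ref{lem:measbound}: writing $\rho_k(gn(\xi)a(s)) = \min_{|J|=k}\prod_{j\in J}|(\e^{-s}s_j, \e^s(r_j+s_j\xi))|$ and discarding all but the factor $|r_j+s_j\xi| \asymp |\xi + r_j/s_j|$ from the $j$ achieving the minimal such value among the coordinates with $s_j^2 > \tfrac{1}{10}$ (the others contribute $\asymp 1$ and can only help), one gets $\rho_k(gn(\xi)a(s)) \gg \e^{(2\ell-k+\,\cdots)s}|\xi+r_j/s_j|^{m}$ for appropriate counts; retaining just $\gg \e^{-(k-2m)s}|\xi+r_j/s_j|^m$ with $1\le m\le k$, and noting $\e^{-(k-2m)s}\ge \e^{-ks}$ does not immediately work — so instead I keep the cleanest form $\rho_k \gg \e^{\text{(something)}\cdot s}\cdot(\text{power of }|\xi+r_j/s_j|)$ and observe that the worst case for the measure bound is when a single linear factor is retained to the $k$-th power after using $|r_i/s_i - r_j/s_j|$ may be small, giving $\rho_k \gg \e^{(2-k)s}\cdot\max(\,|\xi+r_j/s_j|^{k},\ \Delta_k\cdot|\xi+r_j/s_j|\,)$ exactly as in Lemma~\ref{lem:measbound} but with $3$ replaced by $k$; the first of these yields $\mathrm{meas}\{\rho_k \le \delta\} \ll (\delta \e^{(k-2)s})^{1/k} \ll \delta^{1/k}\e^{(1-2/k)s}$, which is worse than claimed. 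To avoid this I should use instead the bound $\rho_k(gn(\xi)a(s)) \gg \e^{-(k-2k')s}$ coming purely from the coordinates — here the clean statement is the one already proved en route in Lemma~\ref{lem:rhocontraction}: $\rho_k(gn(\xi)a(s))\gg \e^{-ks}$ always (take $J$ to be any $k$ coordinates and bound each $|(\e^{-s}s_j,\e^s(r_j+s_j\xi))| \geq \e^{-s}|s_j|$ when $s_j^2>\tfrac1{10}$ and $\gg \e^{s}$ otherwise, wait — rather each factor is $\ge \min(\e^{-s}|s_j|,\ |\cdot|) \gg \e^{-s}$ in the relevant regime), so in fact $\rho_k \in [\,\e^{-ks},\ \e^{ks}\,]$ and moreover the sublevel-set measure bound $\mathrm{meas}\{\xi\in[-1,1]: \rho_k(gn(\xi)a(s))\le\delta\} \ll \delta^{1/k}\e^{-s}$ is exactly the one obtained by the same argument as the first bound of Lemma~\ref{lem:measbound}, using that the product $\prod_{s_j^2>1/10}|\xi+r_j/s_j| \le |\xi + r_{j_0}/s_{j_0}|$ for the minimizing index $j_0$ so $\rho_k \gg \e^{2s}\cdot\text{(stuff)}\ge\ldots$; I will state this measure bound as the key sublemma and prove it in one line by this reduction.

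Then I would finish by the dyadic decomposition exactly as in \eqref{eq:dyadicdecomp}: splitting $[-1,1]$ according to $\delta < \rho_k(gn(\xi)a(s)) \le 2\delta$ with $\delta = c2^i\e^{-ks}$ ranging over dyadic values from $\gg \e^{-ks}$ up to $\ll \e^{ks}$,
\begin{equation*}
\int_{-1}^1 \rho_k(gn(\xi)a(s))^{-\beta}\,\dd\xi
\ll \sum_{\substack{\delta \gg \e^{-ks}\\ \mathrm{dyadic}}} \delta^{-\beta}\,\mathrm{meas}\{\xi\in[-1,1]: \rho_k(gn(\xi)a(s))\le\delta\}
\ll \sum_{\substack{\delta\gg\e^{-ks}\\\mathrm{dyadic}}} \delta^{-\beta}\,\delta^{\frac1k}\e^{-s}.
\end{equation*}
Since $\beta > \tfrac1k$, the exponent $\tfrac1k-\beta$ is negative, so the geometric sum is dominated by its smallest term $\delta \asymp \e^{-ks}$, giving $\ll \e^{-ks(\frac1k-\beta)}\e^{-s} = \e^{(k\beta-1)s}\e^{-s} = \e^{-(2-\beta k)s}$, as required. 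Multiplying back the factor $\rho_k(g)^{-\beta}$ extracted at the start yields \eqref{eq:crudeloccontraction}.

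The main obstacle, as the discussion above makes clear, is getting the sublevel-set measure bound $\mathrm{meas}\{\xi: \rho_k(gn(\xi)a(s))\le\delta\}\ll\delta^{1/k}\e^{-s}$ with the correct power of $\e^{-s}$ (not $\e^{-s/k}$ or similar): the point is that $\rho_k(gn(\xi)a(s))$, being a product of $k$ factors each of which is a norm $|(\e^{-s}s_j,\e^s(r_j+s_j\xi))|$, is bounded below by $\e^{2s}$ times a product of (at most $k$) linear functions of $\xi$ with leading coefficients $|s_j|\asymp 1$ — wait, it is $\e^{-(k-2\ell)s}$ where $\ell = \#\{j\in J: s_j^2>1/10\}$, and to get the uniform $\e^{-s}$ one observes $k-2\ell \le k-2$ when $\ell\ge1$ is impossible to use directly; the resolution is that the near-vertical factors $|s_j|^2\le\tfrac1{10}$ contribute a factor $\asymp\e^{-s}$ each (size $\e^{-s}|s_j|$... no, size $\asymp\e^{s}$ wait) — in any case this is precisely the bookkeeping carried out successfully in the first display of the proof of Lemma~\ref{lem:measbound} for $k=3$, and the general case is identical with $3\rightsquigarrow k$, so I will simply cite that argument. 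Everything else is a routine geometric-sum estimate with no additional ideas required.
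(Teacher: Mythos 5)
Your proposal follows the paper's proof essentially verbatim: reduce to $g\in\SO(2)^d$ with $\rho_k(g)=1$ via the multiplicativity $\rho_k(g'g)=\rho_k(g')\rho_k(g)$ for $g'$ upper triangular, establish the sublevel-set measure bound $\operatorname{meas}\{\xi\in[-1,1]:\rho_k(gn(\xi)a(s))\le\delta\}\ll\delta^{1/k}\e^{-s}$, and then sum dyadically over $\delta\gg\e^{-ks}$; the dyadic computation is clean and matches the paper exactly. The one thing worth fixing is your detour where you derive ``$\rho_k\gg\e^{(2-k)s}|\xi+r_j/s_j|^k$'' and worry that this only yields $\delta^{1/k}\e^{(1-2/k)s}$: that intermediate exponent is simply a miscalculation. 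Each of the $k$ factors in $\rho_k$ satisfies $|(\e^{-s}s_j,\e^s(r_j+s_j\xi))|\ge\e^s|r_j+s_j\xi|$, which is $\gg\e^s$ when $s_j^2\le\tfrac1{10}$ and $\gg\e^s|\xi+r_j/s_j|$ otherwise, so taking the product gives $\rho_k\gg\e^{ks}\prod_{j\in J\setminus J_0}|\xi+r_j/s_j|\gg\e^{ks}|\xi+r_{j_0}/s_{j_0}|^k$ for the minimizing index $j_0$ (or else the sublevel set is trivially controlled), and $\rho_k\le\delta$ then forces $|\xi+r_{j_0}/s_{j_0}|\ll(\delta\e^{-ks})^{1/k}=\delta^{1/k}\e^{-s}$; this is precisely the first bound of Lemma~\ref{lem:measbound} with $3\rightsquigarrow k$, as you eventually conclude, so no new idea is needed --- only the bookkeeping $\e^{ks}$ rather than $\e^{(2-k)s}$.
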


\begin{proof}
  As in the proof of Lemma \ref{lem:rhocontraction}, for $g' = \left(\begin{pmatrix}    * & * \\
    0 & * 
  \end{pmatrix}
  \right)_{\!\!1\leq j\leq d}$, we have $\rho_k(g'g) = \rho_k(g')\rho_k(g)$, so we may assume that $g = \left(\begin{pmatrix}    r_j & -s_j \\
    s_j & r_j
  \end{pmatrix}
  \right)_{\!\!1\leq j\leq d} \in \SO(2)^d$, where we have $\rho_k(g) = 1$.

  We have
  \begin{equation}
    \label{eq:deltalowerbound}
    \rho_k(g) \asymp \min_{\substack{J \subset\{1,\dots, d\} \\ |J|=k}} \max_{J_1 \subset J} \e^{-(k-2|J_1|)s}\prod_{j \in J \setminus J_1} |s_j| \prod_{j \in J_1} |r_j + s_j \xi|.
  \end{equation}
  Let $J_0$ be the set of $j$ such that $s_j^2 \leq \tfrac{1}{10}$, so that if $j \in J_0$, $|r_j + s_j \xi| \asymp 1$ for $\xi \in [-1,1]$.
  Taking $J_1 = J_0 \cap J$ in (\ref{eq:deltalowerbound}) shows that $\rho_k(gn(\xi)a(s)) \gg \e^{-(k-|J_0 \cap J|)s} \geq \e^{-ks}$.

  On the other hand, we claim that the measure of $\xi \in [-1,1]$ such that $\rho_k(gn(\xi)a(s)) \leq \delta $ is $\ll \delta^{\frac{1}{k}} \e^{-s}$.
  Taking $J_1 = J$, we have
  \begin{equation}
    \label{eq:rhoklowerbound}
    \rho_k(gn(\xi)a(s)) \gg \e^{ks} \prod_{j \in J \setminus J_0} |r_j + s_j \xi| \gg \e^{ks} \prod_{j \in J \setminus J_0} |\xi + \tfrac{r_j}{s_j}|. 
  \end{equation}
  Let $j \in J\setminus J_0$ be so that $|\xi + \tfrac{r_j}{s_j}|$. If $|\tfrac{r_j}{s_j}| \geq 2$ then the claim is trivial.
  Otherwise we have
  $$\rho_k(gn(\xi)a(s)) \gg \e^{ks} |\xi + \tfrac{r_j}{s_j}|^{|J \setminus J_0|} \gg \e^{ks} |\xi + \tfrac{r_j}{s_j}|^{k},$$
  and the claim follows in this case as well.

  Decomposing $\xi \in [-1,1]$ according to $\tfrac{1}{2} \delta < \rho_k(gn(\xi)a(s)) \leq \delta $, it follows that
 \begin{equation}
  \begin{split}
    \label{eq:dyadicdecomp2}
    \int_{-1}^1 \rho_k (g n(\xi) a(s))^{-\beta} \dd \xi 
& \ll \sum_{\substack{ \delta \gg \e^{-ks} \\ \mathrm{dyadic}}} \delta^{-\beta} \mathrm{meas}\{ \xi \in [-1,1]: \rho_k(gn(\xi)a(s)) \leq \delta\} \\
 &   \ll \sum_{\substack{ \delta \gg \e^{-ks} \\ \mathrm{dyadic}}} \delta^{\frac{1}{k}-\beta}\e^{-s}  \ll \e^{-(2 - \beta k)s},
  \end{split}
  \end{equation}
  as required. 
\end{proof}

\subsection{Global contraction}
\label{sec:globalcontraction}

We first prove a crude bound for averages of $\alpha_d(gn(\xi)a(s))^{\beta}$.
We note that this bound does not exhibit contraction for $\beta \geq \tfrac{2}{d}$.
\begin{lem}
  \label{lem:crudecontraction}
  Let $d\geq 1$. For any $g \in G^d$, $s > 0$, and $\beta > \tfrac{1}{d}$, we have
  \begin{equation}
    \label{eq:crudecontraction}
    \int_{-1}^1 \alpha_d(gn(\xi)a(s))^\beta \dd \xi 
    \ll \e^{(\beta d- 2)s} \alpha_d(g)^\beta + \e^{\beta s}\int_{-1}^1 \alpha_{d-1}(gn(\xi)a(s))^\beta d \xi .
  \end{equation}
  In particular, we have
  \begin{equation}
    \label{eq:crudecontraction1}
    \int_{-1}^1 \alpha_d(gn(\xi)a(s))^\beta \dd \xi \ll \e^{(\beta d- 2)s} \alpha_d(g)^\beta + \e^{\beta d s}.
  \end{equation}
\end{lem}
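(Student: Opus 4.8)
The plan is to reduce the bound on $\alpha_d$ to the already-available local contraction bounds for $\rho_d$ and $\rho_{d-1}$, paying attention to how the outer $\max_{\gamma \in \Gamma^d}$ in the definition \eqref{eq:alphaldef} interacts with the unipotent translate. The starting point is Lemma \ref{lem:shortestvector}(i), which says $\alpha_d(g') = \min_{|J|=d}\prod_j \rho(\gamma_j(g') g'_j) = \prod_j \rho(\gamma_j(g') g'_j)$ for $g' = g n(\xi)a(s)$, i.e.\ that $\alpha_d$ is just $\rho_d$ after moving each factor into the fundamental domain $\cD$. So I would fix, for each $\xi$, the reducing element $\gamma = \gamma(gn(\xi)a(s)) \in \Gamma^d$ and split the range of $\xi \in [-1,1]$ according to whether all the factors $g_j n(\xi) a(s)$ stay ``low'' in the cusp or not.

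\medskip

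\textbf{Step 1 (reduction to a sum over $\Gamma$).} For a dyadic decomposition of the possible values of $\alpha_d$, or more simply following the standard Eskin–Margulis–Mozes splitting, write
\[
\int_{-1}^1 \alpha_d(gn(\xi)a(s))^\beta \, \dd \xi \ll \sum_{\gamma \in \Gamma^d} \int_{I_\gamma} \rho_d(\gamma g n(\xi) a(s))^{-\beta}\, \dd\xi,
\]
where $I_\gamma \subseteq [-1,1]$ is the (measurable) set of $\xi$ for which $\gamma$ is the reducing element; since the $\gamma$-translated factors lie in $\cD$, on $I_\gamma$ one has $\rho(\gamma_j g_j n(\xi) a(s)) \ll 1$ by Lemma \ref{lem:shortestvector}(ii), and this boundedness is what keeps the sum over $\gamma$ under control (only finitely many $\gamma$ contribute for each $\xi$, and one can bound the overlap combinatorially as in \cite{EskinMargulisMozes1998}). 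The key point is that for a fixed $\gamma$, the factor $\rho(\gamma_j g_j n(\xi) a(s))$ is exactly $\rho\big((\gamma_j g_j) n(\xi) a(s)\big)$, a translate of the fixed element $\gamma_j g_j \in G$ by $n(\xi)a(s)$, so the integrand is a product of quantities to which Lemma \ref{lem:rhocontraction} (or its cruder analogue Lemma \ref{lem:cruderhocontraction}) applies.

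\medskip

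\textbf{Step 2 (separate the ``lowest'' factor).} In the product $\rho_d = \prod_{j=1}^d \rho(\gamma_j g_j n(\xi)a(s))$, isolate one index, say $j_0$, and bound $\rho(\gamma_{j_0} g_{j_0} n(\xi) a(s))^{-\beta} \ll \e^{\beta s} \rho(\gamma_{j_0} g_{j_0})^{-\beta}$ using the upper bound of Lemma \ref{lem:rhotransbounds} (the factor $\e^{\beta s}$ is where the $\e^{\beta s}$ in front of the $\alpha_{d-1}$ term comes from). The remaining product over $j \neq j_0$ is $\rho_{d-1}$ of the $(d-1)$-tuple, and we get the contribution $\e^{\beta s}\int_{-1}^1 \alpha_{d-1}(gn(\xi)a(s))^\beta\,\dd\xi$. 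The competing term — where no factor is separated off — is handled directly by Lemma \ref{lem:cruderhocontraction} with $k = d$, which gives $\int_{-1}^1 \rho_d(\cdots)^{-\beta}\,\dd\xi \ll \e^{(\beta d - 2)s}\rho_d(g)^{-\beta}$; summing over the relevant $\gamma$ (which reintroduces $\alpha_d(g)^\beta$ in place of a single $\rho_d(g)^{-\beta}$) yields $\e^{(\beta d - 2)s}\alpha_d(g)^\beta$. Combining the two cases gives \eqref{eq:crudecontraction}.

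\medskip

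\textbf{Step 3 (iterate down to $\alpha_0 \equiv 1$).} For \eqref{eq:crudecontraction1}, apply \eqref{eq:crudecontraction} repeatedly, using at each stage the analogous inequality for $\alpha_{d-1}, \alpha_{d-2}, \dots$ (the same argument works verbatim with $d$ replaced by any $1 \leq k \leq d$, bounding $\alpha_k$ in terms of $\e^{(\beta k - 2)s}\alpha_k(g)^\beta$ and $\e^{\beta s}\int \alpha_{k-1}^\beta$), until reaching $\alpha_0 \equiv 1$. Since $\beta > \tfrac1d \geq \tfrac1k$ for all $k \leq d$, each exponent $\beta k - 2$ is no obstruction to the telescoping, and the dominant term in the resulting geometric-type sum is $\e^{\beta d s}$ (coming from applying the $\e^{\beta s}$ bound $d$ times, or equivalently from the trivial bound $\alpha_d \ll \e^{ds}$ on $[-1,1]$ combined with $\rho_d(g) \gg \e^{-ds}$ type estimates). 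This gives \eqref{eq:crudecontraction1}.

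\medskip

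\textbf{Main obstacle.} The delicate point is Step 1: controlling the sum over $\gamma \in \Gamma^d$, i.e.\ showing that for fixed $\xi$ only boundedly many (or, after the dyadic decomposition, a controlled number of) $\gamma$ contribute and that their contributions can be summed without losing the exponential gain. This is exactly the mechanism of \cite[Section 5]{EskinMargulisMozes1998} and \cite[Lemma 5.1]{KleinbockMargulis1999} — one uses that distinct reducing elements push the relevant lattice vectors apart, so the sets $\{g : \rho_d(\gamma g n(\xi)a(s)) \leq \delta\}$ for different $\gamma$ are essentially disjoint or have bounded multiplicity — and carrying it out cleanly in the $G^d$ setting, coordinate by coordinate, is the one genuinely technical ingredient; everything else is the local contraction lemmas plus bookkeeping.
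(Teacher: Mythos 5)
Your Step~1 is where the argument breaks down, and the paper's proof is substantially simpler because it never sums over $\gamma\in\Gamma^d$. Writing $\gamma(\xi) := \gamma(gn(\xi)a(s))$ for the reducing element, the paper proves a \emph{pointwise} dichotomy comparing $\gamma(\xi)$ with the single fixed element $\gamma(g)$: for each $\xi\in[-1,1]$,
\begin{equation*}
\alpha_d(gn(\xi)a(s))^\beta \ll \rho_d\big(\gamma(g)gn(\xi)a(s)\big)^{-\beta} + \e^{\beta s}\alpha_{d-1}(gn(\xi)a(s))^\beta.
\end{equation*}
Indeed, if $\gamma(\xi)$ agrees with $\gamma(g)$ up to left multiplication by $\bigl(\pm\bigl(\begin{smallmatrix}1&*\\0&1\end{smallmatrix}\bigr)\bigr)_j$, the left side equals the first term on the right. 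Otherwise some $\gamma_j(\xi)$ differs non-trivially from $\gamma_j(g)$, so Lemma~\ref{lem:shortestvector}(iii) forces $\rho(\gamma_j(\xi)g_j)>1$; then Lemma~\ref{lem:rhotransbounds} gives $\rho(\gamma_j(\xi)g_jn(\xi)a(s))\geq\tfrac{1}{2}\e^{-s}$, and dropping that $j$ from the product yields $\alpha_d(gn(\xi)a(s)) \leq 2\e^s\alpha_{d-1}(gn(\xi)a(s))$. Integrating and applying Lemma~\ref{lem:cruderhocontraction} once to the single element $\gamma(g)g$ gives \eqref{eq:crudecontraction}.

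In your framework you partition $[-1,1]=\bigsqcup_\gamma I_\gamma$ and would need to control $\sum_\gamma\int_{I_\gamma}\rho_d(\gamma gn(\xi)a(s))^{-\beta}\dd\xi$. But applying Lemma~\ref{lem:cruderhocontraction} over the full interval $[-1,1]$ to each summand and then summing over $\gamma$ cannot work: $\sum_\gamma\rho_d(\gamma g)^{-\beta}$ has infinitely many terms of size $\gg 1$ and diverges; $\alpha_d(g)$ is the \emph{maximum}, not a sum, over $\gamma$ of $\rho_d(\gamma g)^{-1}$. So your assertion that ``summing over the relevant $\gamma$ reintroduces $\alpha_d(g)^\beta$'' is a genuine gap, and the bounded-multiplicity machinery you cite does not close it. What closes it is the pointwise comparison with the fixed $\gamma(g)$ above, which sidesteps the sum over $\gamma$ entirely. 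A smaller issue in your Step~3: for $k\leq d$ one has $\tfrac{1}{d}\leq\tfrac{1}{k}$, not $\geq$, so $\beta>\tfrac{1}{d}$ alone does not license the applications of Lemma~\ref{lem:cruderhocontraction} at the lower levels $k<d$ that the iteration requires (this wrinkle is also present in the lemma as stated, but is harmless in the paper's application where $\beta$ is taken close to $2$).
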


\begin{proof}
  We let $\gamma(\xi) = \gamma(gn(\xi) a(s))$ and we note that \eqref{eq:crudecontraction} follows from the following
  \begin{equation}
    \label{eq:pointwisecrudecontraction}
    \rho_d(gn(\xi)a(s))^\beta \dd \xi \ll \rho_d(\gamma(g) g)^\beta + \e^{2s}\alpha_{d-1}(gn(\xi)a(s))^\beta
  \end{equation}
  together with Lemma \ref{lem:cruderhocontraction}. 

  If $\gamma(\xi) = \left(\begin{pmatrix}    \pm 1 & * \\
    0 & \pm 1
  \end{pmatrix}
  \right)_{\!\!1\leq j\leq d} \gamma(g)$, then \eqref{eq:pointwisecrudecontraction} is immediate.
  If for some $j$, $\gamma_j(\xi) \neq \left(\begin{pmatrix}    \pm 1 & * \\
    0 & \pm 1
  \end{pmatrix}
  \right)_{\!\!1\leq j\leq d} \gamma_j(g)$, then it follows from Lemma \ref{lem:shortestvector} that $\rho(\gamma_j(\xi)g_j) > 1$, so by Lemma \ref{lem:rhotransbounds}
  \begin{equation}
    \label{eq:rhogammajxichange}
    \rho(\gamma_j(\xi)g_j n(\xi) a(s)) \geq \tfrac{1}{2} \e^{-s} \rho(\gamma_j(\xi)g_j) \geq \tfrac{1}{2} \e^{-s}.
  \end{equation}
  We now have
  \begin{equation}
    \label{eq:gammachangebound}
    \rho_d(\gamma(\xi)gn(\xi)a(s)) \leq 2 \e^s \alpha_{d-1}(gn(\xi)a(s))
  \end{equation}
  and \eqref{eq:pointwisecrudecontraction} follows in this case as well.

  Finally, we observe that \eqref{eq:crudecontraction1} follows from \eqref{eq:crudecontraction} inductively. 
\end{proof}

In order to obtain a contraction, we define a global modified height function $\widetilde{\alpha}: \Gamma^3 \backslash G^3 \to (0,\infty)$ by
\begin{equation}
  \label{eq:tildealphadef1}
  \widetilde{\alpha}(g) = \phi(\gamma(g)g).
\end{equation}
We now consider exceptional sets to the global contraction hypothesis.
For $s>0$ and $K>1$ we define
\begin{multline}
  \label{eq:cEdef}
  \mathcal{E}_{s,K}:=\bigg\{\Gamma^3 g\in \Gamma^3 \backslash G^3: \Delta_3(\gamma(gn(\xi)a(s)) g)<\e^{-Ks} \\
  \textrm{ for some } 
  \xi \in [-1,1] \textrm{ such that }  \gamma(gn(\xi)a(s)) \neq \left(\begin{pmatrix}    \pm 1 & * \\
    0 & \pm 1
  \end{pmatrix}
  \right)_{\!\! j=1,2,3} \gamma(g)
  \bigg\}.
\end{multline}
The following global contraction hypothesis holds outside the set $\mathcal{E}_{s,K}$.
\begin{prop}
  \label{prop:globalcontraction}
  Let $1<\beta<2$.
  For any $s>0$, $K>1$, and $\Gamma^3 g \notin \mathcal{E}_{s,K}$ we have
  \begin{equation}
    \label{eq:globalcontraction}
    \int_{-1}^1\widetilde{\alpha}\big(gn(\xi)a(s)\big)\dd\xi \\
    \ll  e^{-3(2-\beta) t}\widetilde{\alpha}(g)+e^{(K+2)s}\int_{-1}^{1} \alpha_{2}(gn(\xi)a(s))^{\beta}\dd \xi.
  \end{equation}
\end{prop}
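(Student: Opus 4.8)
The plan is to decompose the $\xi$-integral over $[-1,1]$ according to whether, at the given $\xi$, the fundamental-domain representative $\gamma(gn(\xi)a(s))$ differs from $\gamma(g)$ by a unipotent (up to sign) or not. Write $\gamma(\xi) = \gamma(gn(\xi)a(s))$ and split $[-1,1] = U \cup U^c$, where $U$ is the set of $\xi$ for which $\gamma(\xi) = \bigl(\smallmat{\pm 1 & * \\ 0 & \pm 1}\bigr)_j \gamma(g)$ and $U^c$ its complement. On $U$, since the unipotent part commutes appropriately with $n(\xi)a(s)$ and $\rho_3$ is insensitive to left multiplication by upper-triangular matrices (as noted in the proof of Lemma~\ref{lem:rhocontraction}), we have $\rho_3(\gamma(\xi) g n(\xi) a(s)) = \rho_3(\gamma(g) g \, n(\xi) a(s))$, and likewise $\Delta_3(\gamma(\xi) g n(\xi) a(s)) = \Delta_3(\gamma(g) g\, n(\xi) a(s))$, so that $\widetilde\alpha(gn(\xi)a(s)) = \phi(\gamma(g)g\,n(\xi)a(s))$. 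Applying Lemma~\ref{lem:phicontraction} to the matrix $\gamma(g) g \in G^3$ then gives
\begin{equation}
\int_U \widetilde\alpha(gn(\xi)a(s)) \dd\xi \leq \int_{-1}^1 \phi(\gamma(g)g\, n(\xi) a(s)) \dd\xi \ll \e^{-3(2-\beta)s} \phi(\gamma(g)g) = \e^{-3(2-\beta)s} \widetilde\alpha(g),
\end{equation}
which is the first term on the right of \eqref{eq:globalcontraction}.

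For the complementary set $U^c$, the point is that because $\Gamma^3 g \notin \mathcal{E}_{s,K}$, every $\xi \in U^c$ satisfies $\Delta_3(\gamma(\xi) g) \geq \e^{-Ks}$, so we have the pointwise lower bound $\phi(\gamma(\xi) g n(\xi) a(s))^{-1}$-type control only through $\rho_3$ up to the factor $\e^{Ks}$; more precisely $\widetilde\alpha(gn(\xi)a(s)) = \Delta_3(\gamma(\xi) g n(\xi)a(s))^{-1} \rho_3(\gamma(\xi) g n(\xi) a(s))^{-\beta}$, and since translation by $n(\xi)a(s)$ can only shrink $\Delta_3$ by a bounded power of $\e^s$ (as quantified in \eqref{eq:kappaas}, valid for any $g \in G^3$, here applied to $\gamma(\xi)g$) we get $\Delta_3(\gamma(\xi)gn(\xi)a(s))^{-1} \ll \e^{4s} \Delta_3(\gamma(\xi)g)^{-1} \leq \e^{(K+4)s}$. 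Meanwhile the same argument as in Lemma~\ref{lem:crudecontraction}: for $\xi \in U^c$ there is some coordinate $j$ with $\gamma_j(\xi) \neq \bigl(\smallmat{\pm1 & * \\ 0 & \pm1}\bigr)\gamma_j(g)$, hence $\rho(\gamma_j(\xi) g_j) > 1$ by Lemma~\ref{lem:shortestvector}, hence by Lemma~\ref{lem:rhotransbounds} $\rho(\gamma_j(\xi) g_j n(\xi) a(s)) \geq \tfrac12 \e^{-s}$, which forces $\rho_3(\gamma(\xi) g n(\xi) a(s)) \ll \e^s \alpha_2(gn(\xi)a(s))$, i.e. $\rho_3(\gamma(\xi) g n(\xi) a(s))^{-\beta} \ll \e^{\beta s}\alpha_2(gn(\xi)a(s))^{-\beta}$ — wait, the inequality goes the right way: $\rho_3 \ll \e^s \alpha_2^{-1}$ would be needed, but in fact $\rho_3(\gamma(\xi)gn(\xi)a(s)) \gg$ nothing useful; rather we should bound $\widetilde\alpha(gn(\xi)a(s))$ directly using $\alpha_3(gn(\xi)a(s)) = \rho_3(\gamma(\xi)gn(\xi)a(s))^{-1} \ll \e^s \alpha_2(gn(\xi)a(s))$ together with $\alpha_3 \leq \alpha_3$. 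Combining, on $U^c$,
\begin{equation}
\widetilde\alpha(gn(\xi)a(s)) \ll \e^{(K+4)s}\, \alpha_3(gn(\xi)a(s))^\beta \ll \e^{(K+4+\beta)s}\, \alpha_2(gn(\xi)a(s))^\beta,
\end{equation}
and integrating over $U^c \subset [-1,1]$ yields the second term (with a slightly larger constant in the exponent, which one can absorb or track — here $K+4+\beta \leq K+6$, and since the statement allows $e^{(K+2)s}$ one adjusts $K$ accordingly, or one sharpens the $\Delta_3$-loss estimate on $U^c$ to $\e^{2s}$ by noting $\gamma(\xi)g$ already lies in a region where the relevant $\Delta_2 \geq \e^{-2s}$ is not the binding case).

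The main obstacle is the bookkeeping on $U^c$: one must verify that the escape-of-mass loss in $\Delta_3$ under the unipotent-times-diagonal translation is genuinely controlled by $\e^{(K+2)s}$ rather than a larger power, which requires using \eqref{eq:kappaas} together with the defining condition $\Gamma^3 g \notin \mathcal{E}_{s,K}$ carefully — in particular, observing that on $U^c$ the relevant branch of \eqref{eq:kappaas} is the one where $\Delta_3(\gamma(\xi)g) \geq \e^{-Ks}$ is already not too small, so that the multiplicative loss is at most $\e^{2s}$ rather than $\e^{4s}$. I would also need to double-check that the measurability and the passage between $\alpha_3$ and $\rho_3 \circ \gamma$ is legitimate on each piece, i.e. that $\gamma(\xi)$ is locally constant off a measure-zero set, which follows from the explicit description of $\cD$.
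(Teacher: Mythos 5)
Your decomposition into $U$ (where $\gamma(\xi)$ agrees with $\gamma(g)$ up to $\pm$ unipotent) and $U^c$, with $\phi$-contraction on $U$ and the non-exceptionality hypothesis on $U^c$, is exactly the paper's strategy (the paper phrases it as a single pointwise bound $\phi(\gamma(\xi)gn(\xi)a(s)) \ll \phi(\gamma(g)gn(\xi)a(s)) + \e^{(K+2)s}\alpha_2(gn(\xi)a(s))^\beta$ rather than an explicit partition, but these are equivalent). The part on $U$ is correct, including the observation that $\rho_3$ and $\Delta_3$ are invariant under left multiplication by $\bigl(\begin{smallmatrix}\pm1 & *\\0 & \pm1\end{smallmatrix}\bigr)$.

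On $U^c$, however, you have the direction of the $\Delta_3$ comparison backwards. Under right translation by $n(\xi)a(s)$ with $s>0$, the bottom-row ratio $d_k/c_k$ is replaced by $(\xi + d_k/c_k)\e^{2s}$, so each difference $|d_i/c_i - d_j/c_j|$ is \emph{multiplied} by $\e^{2s}\geq 1$; combined with the $\min(1,\cdot)$ in the definition, this means $\Delta_3(h\,n(\xi)a(s)) \geq \Delta_3(h)$ for every $h\in G^3$ — translation does not shrink $\Delta_3$, it can only grow it. This is also what \eqref{eq:kappaas} says: in each of the three cases the right-hand side is $\leq 1$. So there is no loss factor $\e^{4s}$ (or $\e^{2s}$) to chase; one simply has
\[
\Delta_3(\gamma(\xi)gn(\xi)a(s))^{-1} \leq \Delta_3(\gamma(\xi)g)^{-1} \leq \e^{Ks}
\]
directly from $\Gamma^3 g\notin\cE_{s,K}$. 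Together with $\alpha_3(gn(\xi)a(s))\leq 2\e^s\alpha_2(gn(\xi)a(s))$ and $\beta<2$ this gives
\[
\widetilde\alpha(gn(\xi)a(s)) = \Delta_3(\gamma(\xi)gn(\xi)a(s))^{-1}\alpha_3(gn(\xi)a(s))^\beta \leq 2^\beta\,\e^{(K+\beta)s}\alpha_2(gn(\xi)a(s))^\beta \ll \e^{(K+2)s}\alpha_2(gn(\xi)a(s))^\beta,
\]
which matches the stated constant with no bookkeeping issues. Your paragraph speculating about how to shave the exponent from $K+6$ to $K+2$ is thus moot: the apparent overshoot came only from the sign slip. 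With this correction the argument is complete and coincides with the paper's proof.
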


\begin{proof}
  We argue as in the proof of Lemma \ref{lem:crudecontraction}.
  It suffices to show that
  \begin{equation}
    \label{eq:pointwisecontractionbound}
    \phi(\gamma(\xi)gn(\xi)a(s)) \ll \phi(\gamma(g)gn(\xi)a(s)) +e^{(K+2)s} \alpha_{2}(gn(\xi)a(s))^{\beta},
  \end{equation}
  since then \eqref{eq:globalcontraction} follows from Lemma \ref{lem:phicontraction}.

  If $\gamma(\xi) = \left(\begin{pmatrix}    \pm 1 & * \\
    0 & \pm 1
  \end{pmatrix}
  \right)_{\!\! j=1,2,3} \gamma(g)$, then \eqref{eq:pointwisecontractionbound} is immediate, so let us suppose that there is $j$ such that $\gamma_j(\xi) \neq \left(\begin{pmatrix}    \pm 1 & * \\
    0 & \pm 1
  \end{pmatrix}
  \right)_{\!\! j=1,2,3} \gamma_j(g)$.
  Then we have $\rho(\gamma_j(\xi)g_jn(\xi)a(s)) \geq \tfrac{1}{2} \e^{-s}$ and therefore
  \begin{equation}
    \label{eq:gammachangebound1}
    \alpha_3(gn(\xi) a(s)) \leq 2\e^s \alpha_{2}(gn(\xi)a(s))
  \end{equation}
  as in the proof of Lemma \ref{lem:crudecontraction}.
  Since $g \not\in \cE_{s,K}$, \eqref{eq:pointwisecontractionbound} follows.
\end{proof}

In what follows it is at times easier to work with the following set.
  \begin{lem}
    \label{lem:altexceptionalset}
    We have
    \begin{multline}
      \label{eq:tildecEdef}
      \cE_{s,K} \subset \tilde{\cE}_{s,K} := \{\Gamma^3 g\in \Gamma^3 \backslash G^3: \Delta_3(\gamma g)<\e^{-Ks} \\
  \textrm{ for some } 
  \gamma \in \Gamma^3
  \textrm{ such that } \tfrac{1}{10}\e^{-s} < \max_{k=1,2,3}\rho(\gamma_kg_k) \leq 10 \e^{s} \}.
    \end{multline}
  \end{lem}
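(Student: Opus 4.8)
The plan is to start with $\Gamma^3 g \in \cE_{s,K}$ and produce the witness $\gamma \in \Gamma^3$ required for membership in $\tilde\cE_{s,K}$. By definition there is $\xi \in [-1,1]$ with $\gamma(gn(\xi)a(s)) \neq \left(\begin{smallmatrix} \pm 1 & * \\ 0 & \pm 1\end{smallmatrix}\right)_j \gamma(g)$ and $\Delta_3(\gamma(gn(\xi)a(s))\, g) < \e^{-Ks}$. Set $\gamma := \gamma(gn(\xi)a(s))$, so that $\gamma_j g_j n(\xi) a(s) \in \cD$ for each $j$, and in particular, by Lemma \ref{lem:shortestvector}(ii), $\rho(\gamma_j g_j n(\xi)a(s)) \ll 1$, while the standard fundamental domain also gives the matching lower bound $\rho(\gamma_j g_j n(\xi)a(s)) \gg 1$. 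Applying Lemma \ref{lem:rhotransbounds} (with $|\xi| \leq 1$, $s>0$) then yields
\begin{equation*}
\tfrac12 \e^{-s} \rho(\gamma_j g_j) \leq \rho(\gamma_j g_j n(\xi) a(s)) \leq 2 \e^{s} \rho(\gamma_j g_j),
\end{equation*}
and hence $\rho(\gamma_j g_j) \asymp \e^{\pm s}$; after adjusting the implied absolute constants (replacing the $O(1)$ bounds on $\rho(\gamma_j g_j n(\xi)a(s))$ by explicit ones from the shape of $\cD$, e.g. $\tfrac{\sqrt3}{2} \leq \rho \leq \text{const}$), this gives exactly $\tfrac1{10}\e^{-s} < \rho(\gamma_k g_k) \leq 10\e^{s}$ for every $k$, hence $\tfrac1{10}\e^{-s} < \max_k \rho(\gamma_k g_k) \leq 10\e^{s}$. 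It remains to check $\Delta_3(\gamma g) < \e^{-Ks}$; this is literally the hypothesis, since $\gamma = \gamma(gn(\xi)a(s))$. So $\Gamma^3 g \in \tilde\cE_{s,K}$.

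The one point needing genuine care is the \emph{lower} bound $\rho(\gamma_j g_j n(\xi)a(s)) \gg 1$: this requires knowing that $\gamma_j g_j n(\xi)a(s)$ lies in the standard fundamental domain $\cD$, on which the bottom row $(c,d)$ of the matrix satisfies $|(c,d)| \geq$ (a fixed positive constant), since the imaginary part of the corresponding point in the upper half plane is bounded above by $y^{-1} \leq 2/\sqrt3$ on $\cD$ (equivalently, $\rho = (c^2+d^2)^{1/2}$ is bounded below on $\cD$ because the orbit of $i$ never comes arbitrarily close to the real axis inside a fundamental domain). Combined with Lemma \ref{lem:rhotransbounds} this propagates to a lower bound on $\rho(\gamma_j g_j)$, which is what drives the two-sided estimate $\rho(\gamma_k g_k) \asymp \e^{\pm s}$. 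The constants $\tfrac1{10}$ and $10$ in the definition of $\tilde\cE_{s,K}$ are chosen comfortably large to absorb the constants $\tfrac12$ and $2$ from Lemma \ref{lem:rhotransbounds} together with the $O(1)$ constants coming from $\cD$; any reader can verify that these inclusions of constants work for all $s > 0$.

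I expect no serious obstacle here: the lemma is a bookkeeping statement that translates the "$\xi$-dependent" exceptional set $\cE_{s,K}$, whose defining condition refers to the fundamental-domain representative of the \emph{translated} point $gn(\xi)a(s)$, into the "$\xi$-free" set $\tilde\cE_{s,K}$, whose defining condition refers to an arbitrary $\gamma \in \Gamma^3$ subject only to the size constraint $\tfrac1{10}\e^{-s} < \max_k \rho(\gamma_k g_k) \leq 10\e^{s}$. The only ingredients are Lemma \ref{lem:shortestvector}(ii), the geometry of $\cD$, and Lemma \ref{lem:rhotransbounds}; all are available. The mild subtlety is simply to make sure the constants match, which is why the target set is stated with the generous constants $\tfrac1{10}$ and $10$ rather than sharp ones.
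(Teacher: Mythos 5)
Your proof has a genuine gap exactly at the point you flagged as "the one point needing genuine care." You claim that on the standard fundamental domain $\cD$ one has $\rho \gg 1$, reasoning that "the imaginary part $\dots$ is bounded above by $y^{-1}\leq 2/\sqrt3$." This is backwards: on $\cD$ the imaginary part $y$ is bounded \emph{below} by $\sqrt3/2$, and since $\rho = y^{-1/2}$ on $\cD$ (as you can read off from \eqref{eq:standardfunddom}), $\rho$ is bounded \emph{above} there --- it tends to $0$ as one goes up the cusp. There is no uniform lower bound $\rho \gg 1$ on $\cD$, and consequently no two-sided estimate $\rho(\gamma_k g_k)\asymp \e^{\pm s}$ for every $k$; indeed, for a $k$ with $\gamma_k(\xi)=\pm\left(\begin{smallmatrix}1&*\\0&1\end{smallmatrix}\right)\gamma_k(g)$, nothing prevents $\rho(\gamma_k g_k)$ from being tiny.

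The fix, which is what the paper does, is to obtain the lower bound only for the \emph{witness index} $j$ at which the fundamental-domain representative genuinely changes, and to get it not from the geometry of $\cD$ but from Lemma~\ref{lem:shortestvector}(iii). By that part of the lemma (in contrapositive form), if $\gamma_j(\xi)\neq\pm\left(\begin{smallmatrix}1&*\\0&1\end{smallmatrix}\right)\gamma_j(g)$, then $\rho(\gamma_j(\xi)g_j)>1$: were $\rho(\gamma_j(\xi)g_j)\leq 1$, the element $\gamma_j(g)\gamma_j(\xi)^{-1}$ returning $\gamma_j(\xi)g_j$ to $\cD$ would have to be of the form $\pm\left(\begin{smallmatrix}1&*\\0&1\end{smallmatrix}\right)$, contradicting the hypothesis. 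This gives $\max_k\rho(\gamma_k g_k) > 1 > \tfrac1{10}\e^{-s}$, which is all that $\tilde\cE_{s,K}$ demands (a bound on the \emph{maximum}, not each coordinate). The upper bound $\rho(\gamma_k g_k)\leq 10\e^s$ for every $k$ then follows from Lemma~\ref{lem:rhotransbounds} and the upper bound $\rho\ll 1$ on $\cD$, as in your write-up.
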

  
\begin{proof}
  As above,
  $\gamma_j(\xi) \neq \left(\begin{pmatrix}    \pm 1 & * \\
    0 & \pm 1
  \end{pmatrix}
  \right)_{\!\! j=1,2,3} \gamma_j(g)$ implies $\rho(\gamma_j(\xi)g_jn(\xi)a(s)) \geq \tfrac{1}{2} \e^{-s}$.
  We also have from Lemma \ref{lem:rhotransbounds} that
  \begin{equation}
    \label{eq:rhoupperbound}
    \rho(\gamma_j(\xi)g_j) \leq 2\e^s \rho(\gamma_jn(\xi)a(s)) \leq 10 \e^s,
  \end{equation}
  so the containment \eqref{eq:tildecEdef} follows.
\end{proof}

\subsection{Smaller moments}
\label{sec:smallmoments}

In this section we prove Proposition \ref{prop:smallmoment}, which controls the escape of mass for $\alpha_3^\beta$ with $\beta < \tfrac{2}{3}$.
Combined with Lemma \ref{lem:alpha3supbound} this always for some useful preliminary control over the escape of mass.
\begin{prop}
  \label{prop:smallmoment}
  Let $d\geq 1$. For any $\Gamma^dg \in \Gamma^d \backslash G^d$ and $\beta < \tfrac{2}{d}$, we have
  \begin{equation}
    \label{eq:smallmoment}
    \sup_{t > 0} \int_{-10}^{10} \alpha_d(gn(\xi) a(t))^\beta \dd \xi < \infty.
  \end{equation}
\end{prop}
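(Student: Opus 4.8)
The plan is to deduce Proposition~\ref{prop:smallmoment} from the crude contraction estimate of Lemma~\ref{lem:crudecontraction} by an iteration in the $a(t)$-direction, using the bound $\beta < \tfrac{2}{d}$ to ensure the self-improving term is genuinely contracting. First I would reduce the continuous supremum over $t>0$ to the case of integer (or half-integer) $t$: if $t = n + r$ with $n\in\Z_{\geq 0}$ and $0\leq r < 1$, then $gn(\xi)a(t) = gn(\xi)a(r)a(n)$, and since $a(r)$ ranges over a compact set and $n(\xi)a(r) = n(\xi)a(r)$ only shifts things boundedly, it suffices to bound $\int_{-10}^{10}\alpha_d(g'n(\xi)a(n))^\beta\,\dd\xi$ uniformly in $n$, for $g'$ in a fixed compact neighbourhood of $g$; absorbing $a(r)$ and rescaling the $\xi$-interval this is comparable to $\int_{-1}^1$. (One should be slightly careful that translating the interval $[-10,10]$ by the unipotent requires a covering argument — split $[-10,10]$ into $O(1)$ subintervals of length $\le 1$ and apply the lemmas on each, which is harmless.)

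Next I would iterate Lemma~\ref{lem:crudecontraction}. Write $s=1$ (a fixed unit step) and apply \eqref{eq:crudecontraction1} repeatedly: setting $\Phi_n(g) := \int_{-1}^1 \alpha_d(gn(\xi)a(n))^\beta\,\dd\xi$, the semigroup property $a(n) = a(1)a(n-1)$ together with \eqref{eq:crudecontraction1} — applied after first estimating $\alpha_d(gn(\xi)a(n))$ by breaking off one unit step — yields a recursion of the shape $\Phi_n(g) \ll \e^{(\beta d - 2)}\,\alpha_d(g)^\beta \cdot(\text{bounded factors from earlier steps}) + \e^{\beta d}(1 + \Phi_{n-1}(\,\cdot\,))$, but this is not quite closed because the inner integral at step $n-1$ is over translates. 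The cleaner route is to fix the unipotent and instead use the pointwise content of the proof of Lemma~\ref{lem:crudecontraction}: $\alpha_d$ at time $t$ is controlled by $\alpha_d$ at time $t-1$ along the same orbit times $\e^{O(1)}$, and the dyadic decomposition gives, after $n$ steps, a geometric series $\sum_{j=0}^{n}\e^{(\beta d - 2)j}\cdot(\text{const})$ for the main term plus a bounded contribution; since $\beta d - 2 < 0$ this series converges as $n\to\infty$, giving a bound independent of $n$ (hence of $t$). The dependence on $g$ enters only through $\alpha_d(g)^\beta$ in the very first term, which is a fixed finite constant for our fixed $g$.

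The one genuine subtlety — and the step I expect to be the main obstacle — is handling the lower-rank term $\e^{\beta s}\int_{-1}^1 \alpha_{d-1}(gn(\xi)a(s))^\beta\,\dd\xi$ that appears in \eqref{eq:crudecontraction} and that was already absorbed crudely into \eqref{eq:crudecontraction1}. In a true iteration one wants this term to also be under control, which requires an induction on $d$: for $d=1$, $\alpha_0 \equiv 1$ and the statement is exactly \eqref{eq:crudecontraction1} with $d=1$, $\beta<2$; for the inductive step one feeds the bound for $\alpha_{d-1}$ (known for \emph{all} $\beta < \tfrac{2}{d-1}$, in particular for our $\beta<\tfrac{2}{d}<\tfrac{2}{d-1}$) back into \eqref{eq:crudecontraction}, and then the $\e^{\beta s}$ prefactor is harmless because it multiplies a \emph{bounded} (in $s$) quantity. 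Thus the correct structure is: induct on $d$; at each stage combine Lemma~\ref{lem:crudecontraction} with the inductive hypothesis and sum the resulting convergent geometric series in the number of unit steps. The estimate $\beta d < 2$ is used precisely to make the top-rank geometric ratio $\e^{\beta d - 2}$ less than one, and the covering of $[-10,10]$ by unit intervals together with compactness of the $a(r)$-factor handles the passage from the $\int_{-1}^1$ statements to the $\int_{-10}^{10}$ one and from integer to real $t$.
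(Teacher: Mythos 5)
Your proposal takes essentially the same route as the paper: induct on $d$, iterate the crude contraction of Lemma~\ref{lem:crudecontraction} along a discretization of the $a$-direction, and use $\beta d<2$ to make the top-rank geometric ratio less than one while the lower-rank term is absorbed by the inductive hypothesis. One detail worth flagging: with a fixed unit step $s=1$ the implicit constant in \eqref{eq:crudecontraction1} may defeat the contraction, so (as the paper does) one should pick $s$ large enough that the factor $C\e^{(\beta d-2)s}$ is genuinely below $1$ before iterating; and the clean way to close the recursion is the Fubini/covering trick $\int\alpha_d(gn(\xi)a(ks))^\beta\,\dd\xi\ll\int\!\int\alpha_d(gn(\xi)a((k-1)s)n(\xi_1)a(s))^\beta\,\dd\xi_1\,\dd\xi$, which replaces the ``pointwise content'' workaround you sketch (a pointwise step loses the averaging that produces contraction).
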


\begin{proof}
  We prove this by induction on $d$.
  The case $d = 1$ being verified by the following straightforward calculation for $\beta < 2$:
  \begin{multline}
    \label{eq:d1smallmoment}
    \int_{-10}^{10} \alpha_1(gn(\xi) a(t))^\beta \dd \xi \ll \int_{|\xi| \leq \epsilon \e^{-t}} ( \e^{-2t} + |\xi|^2 \e^{2t})^{-\frac{\beta}{2}} \dd \xi \\
    + \sum_{\substack{\delta \ll 1 \\ \mathrm{dyadic}}} \delta^{-\beta} \sum_{1 \leq  c \ll \delta \e^t} \sum_{ 0 < |d| \ll |c|} \tfrac{\delta}{|c|} \e^{-t} \ll 1,
  \end{multline}
  where $\epsilon> 0$ is chosen as in Lemma \ref{lem:basicfundamentaldomain} to ensure $d \neq 0$. 

  For $d > 1$, we let fix $s > 0$ large enough so that, applying Lemma \ref{lem:crudecontraction},
  \begin{equation}
    \label{eq:sfixeddef}
    \int_{-1}^1 \alpha_d(gn(\xi)a(s))^\beta \dd \xi \leq \tfrac{1}{3} \alpha_d(g)^\beta + O( \int_{-1}^1 \alpha_{d-1}(gn(\xi)a(s))^\beta \dd \xi). 
  \end{equation}
  We write $t = ks + t_1$ with integer $k\geq 0$ and $0\leq t_1 < s$.
  By Lemma \ref{lem:rhotransbounds} we have
  \begin{equation}
    \label{eq:ttoksbound}
    \int_{-10}^{10} \alpha_d(gn(\xi) a(t))^\beta \dd \xi \ll \int_{-10}^{10} \alpha_d(gn(\xi) a(ks))^\beta \dd \xi.
  \end{equation}
  We now show that the right side of \eqref{eq:ttoksbound} is bounded by iterating the following.
  \begin{equation}
  \begin{split}
    \label{eq:contractioniteration}
&    \int_{-10}^{10} \alpha_d(gn(\xi) a(ks))^\beta \dd \xi \\
 &   \leq 2 \int_{-10}^{10} \int_{-1}^1 \alpha_d(gn(\xi) a((k-1)s)n(\xi_1)a(s))^\beta \dd \xi_1 \dd \xi \\
  &  \leq \tfrac{2}{3} \int_{-10}^{10} \alpha_d(gn(\xi) a((k-1)s))^\beta \dd \xi + O( \int_{-10}^{10} \alpha_{d-1}(gn(\xi) a(ks))^\beta \dd \xi) \\
  &  \leq \tfrac{2}{3} \int_{-10}^{10} \alpha_d(gn(\xi) a((k-1)s))^\beta \dd \xi + O( 1),
  \end{split}
  \end{equation}
  where the last bound follows by induction.
  The bound \eqref{eq:smallmoment} now follows from the case $k=0$, which is implied by Lemma \ref{lem:rhotransbounds}. 
  \end{proof}

\subsection{Proof of Theorem \ref{thm:escapeofmass}}
\label{sec:propproof}

Let $K>1$ and $\delta=\frac{c}{K}$ for some constant $c>0$ to be determined later. For this $\delta$ and sufficiently large $t=\log M$, we can find a finite sequence $\set{s_i}_{1\leq i\leq N}$ with $s_i=(1+\delta)s_{i+1}$ for $1\leq i\leq N-1$ and $T\leq s_N\leq 2T$ for some given $T>0$ sufficiently large. Then the sequence $\set{s_i}_{1\leq i\leq N}$ satisfies
        \begin{equation}
        \label{eq:sibound}
        \log M=s_1+\cdots+s_N, %\quad
        %s_N \textrm{ sufficiently large but bounded},
    \end{equation}
    \begin{equation}
      \label{eq:sumratio}
      \left(1-\frac{s_N}{s_i}\right)\del^{-1}s_i\leq s_{i+1}+\cdots+s_N\leq \del^{-1}s_i 
      \textrm{ for any }1\leq i\leq N-1.
  \end{equation}
    
We recall from Lemma \ref{lem:crudecontraction} that
    \eqlabel{eq:recalltrivialcontraction}{
        \int_{-1}^{1}\alpha_d\big(hn(\xi)a(s)\big)^{\beta}d\xi\leq  e^{(3\beta-2 + \epsilon ) s}\alpha_3(h)^{\beta}+e^{O(1)s}\alpha_{2}(h)^{\beta}}
    for any $\Gamma^3 h\in \Gamma^3\backslash G^3$ and sufficiently large $s$, and from Proposition \ref{prop:globalcontraction}
    \eqlabel{eq:recallnontrivialcontraction}{
        \int_{-1}^{1}\widetilde{\alpha}_d\big(hn(\xi)a(s)\big)d\xi\leq e^{-3(2-\beta - \epsilon) s}\widetilde{\alpha}(h)+e^{(K+O(1))s}\alpha_{2}(h)}
      for any $\Gamma h\notin\cE_{s,K}$ and sufficiently large $s$.
      Here and in what follows implied constants $\ll$ may depend on $K$ and $\delta$, but $O(\cdot)$ in the exponent are independent of $K$ and $\delta$.

      We note that from Lemma \ref{lem:alpha3supbound} and Proposition \ref{prop:smallmoment},
\begin{multline}
  \label{eq:boundedxi}
  \tfrac{1}{L} \int_{\e^{-(1 + \eta)t} < |\xi| \leq 10} \alpha_3(g_0 n(\xi)a(t))^\beta \dd \xi \\
  \ll \tfrac{1}{L} \e^{(\beta - \frac{2}{3} + \epsilon_1) (\frac{3\kappa}{\kappa + 1} + 3\eta)t} \int_{|\xi| \leq 10} \alpha_3(g_0 n(\xi)a(t))^{\frac{2}{3} - \epsilon_1} \dd \xi \ll_\epsilon \tfrac{1}{L} \e^{((3\beta - 2)(\frac{\kappa}{\kappa + 1} + \eta) + \epsilon)t}
\end{multline}
for any $\epsilon > 0$.
This contributes the second term on the right of (\ref{eq:escapeofmass}).

We have
\eqlabel{eq:Ztexpression}{L^{-1}\int_{10}^L \alpha_3\big(g_0n(\xi)a(t)\big)^{\beta}d\xi\leq 2\mathsf{Z}_t,}
where
$$\mathsf{Z}_t:=L^{-1}\int_{3}^{3L}\int \alpha_3\big(g_0n(\xi_0)n(\xi_1)a(s_1)\cdots n(\xi_N)a(s_N)\big)^{\beta}dm_I^{\otimes N}(\xi_1,\cdots,\xi_{N})d\xi_0$$
with $I = [-1,1]$ and $m_I$ is Lebesgue measure on $I$.
A similar bound holds for the integral over $-L \leq \xi \leq -10$, so we shall now prove that $\displaystyle\sup_{t>0}\mathsf{Z}_t<\infty$.
    
    For $1 \leq m \leq N-1$, let us define
    $$\overline{\Theta}_m:=\set{(\xi_0,\cdots,\xi_m)\in [3,3L]\times I^{m-1}: \Gamma^3 g_0n(\xi_0)n(\xi_1)a(s_1)\cdots n(\xi_m)a(s_m)\in \cE_{s_{m+1},K}},$$
    $$\Theta_m:=\overline{\Theta}_m\times I^{N-m}\subseteq [3,3L]\times I^{N-1},\ \Theta:=\bigcup_{m=1}^{N-1}\Theta_m.$$
    We have $\mathsf{Z}_t \leq \mathsf{Y}_t+\sum_{m=1}^{N-1}\mathsf{E}_{t,m}$, where
    $$\mathsf{Y}_t:=L^{-1}\int_{[3,3L]\times I^{N-1}\setminus\Theta} \alpha_3\big(g_0n(\xi_0)n(\xi_1)a(s_1)\cdots n(\xi_N)a(s_N)\big)^{\beta}dm_I^{\otimes N}(\xi_1,\cdots,\xi_{N}),$$
    $$\mathsf{E}_{t,m}:=L^{-1}\int_{\Theta_m} \alpha_3\big(g_0n(\xi_0)n(\xi_1)a(s_1)\cdots n(\xi_N)a(s_N)\big)^{\beta}dm_I^{\otimes N}(\xi_1,\cdots,\xi_{N})$$
    for $1\leq m\leq N-1$. 
    
    We shall first estimate $\mathsf{E}_{t,m}$. Write
    $$\mathsf{E}_{t,m}=L^{-1}\int_{\overline{\Theta}_m}J(\xi_0,\xi_1,\cdots,\xi_m)dm_I^{\otimes m}(\xi_1,\cdots,\xi_m),$$
    where
    $$J(\xi_0,\xi_1,\cdots,\xi_m):=\int_{I^{N-m}}\alpha_3\big(g_0n(\xi_1)a(s_1)\cdots n(\xi_N)a(s_N)\big)^{\beta}dm_I^{\otimes (N-m)}(\xi_{m+1},\cdots,\xi_{N})d\xi_0.$$
    
    Since $s_i\geq T$ for any $1\leq i\leq N$, we may apply \eqref{eq:recalltrivialcontraction} for $J(\xi_1,\cdots,\xi_m)$ repeatedly with $s=s_N,\cdots,s_{m+1}$, and get
    \eqlabel{eq:Badestimate1}{\begin{aligned}
        J(\xi_0,\xi_1,\cdots,\xi_m)&\leq e^{(3\beta-2 + \epsilon)(s_{m+1}+\cdots+s_N)}\alpha_{3}\big(g_0n(\xi_0)n(\xi_1)a(s_1)\cdots n(\xi_m)a(s_m)\big)^{\beta}\\&\quad+\sum_{i=m+1}^{N}e^{O(s_i)}e^{(3\beta-2)(s_{i+1}+\cdots+s_{N})}\alpha_{2}\big(g_0n(\xi_0)n(\xi_1)a(s_1)\cdots n(\xi_m)a(s_m)\big)^{\beta}\\&\leq (N-m)e^{(3\beta-2)(s_{m+2}+\cdots+s_{N})+O(s_{m+1})}\alpha_{3}\big(g_0n(\xi_0)n(\xi_1)a(s_1)\cdots n(\xi_m)a(s_m)\big)^{\beta}\\&\ll e^{\{(3\beta-2)\delta^{-1}+O(1)\}s_{m+1}}\alpha_{3}\big(g_0n(\xi_0)n(\xi_1)a(s_1)\cdots n(\xi_m)a(s_m)\big)^{\beta},
    \end{aligned}}
  using \eqref{eq:sumratio} in the penultimate inequality.
    
    Let $\varphi:=\alpha_3^{\beta}\cdot\mathds{1}_{\cE_{s_{m+1},K}}$. By Proposition \ref{prop:avoidanceMargulis} we have
    \eq{\begin{aligned}\label{eq:thetabarint}
        L^{-1}\int_{\widebar{\Theta}_m}\alpha_3&\big(g_0n(\xi_0)n(\xi_1)a(s_1)\cdots n(\xi_m)a(s_m)\big)^{\beta}dm_I^{\otimes m}(\xi_1,\cdots,\xi_m)d\xi_0\\&=L^{-1}\int_{[3,3L]\times I^{m-1}}\varphi\big(g_0n(\xi_0)n(\xi_1)a(s_1)\cdots n(\xi_m)a(s_m)\big)dm_I^{\otimes m}(\xi_1,\cdots,\xi_m)d\xi_0\\&\leq L^{-1}\int_{1}^{10L}\varphi(g_0n(\xi)a(s_1+\cdots+s_m))d\xi\ll e^{-(\mu K+O(1))s_{m+1}}.
    \end{aligned}}
    
    In combination with \eqref{eq:Badestimate1} it follows that
    \eqlabel{eq:Badestimate3}{\begin{aligned}\mathsf{E}_{t,m}&\ll e^{\{(3\beta-2)\delta^{-1}+O(1)\}s_{m+1}}  \\ & \quad \times L^{-1} \int_{\widebar{\Theta}_m}\alpha_3\big(g_0n(\xi_0)n(\xi_1)a(s_1)\cdots n(\xi_m)a(s_m)\big)^{\beta}dm_I^{\otimes m}(\xi_1,\cdots,\xi_m)\\&\ll e^{-\{\mu K-(3\beta-2)\delta^{-1}+O(1)\}s_{m+1}}
    \end{aligned}}
    for any $0\leq m\leq N-1$. Therefore, if 
    \eqlabel{eq:Condition1}{c\mu=\mu \delta K >3\beta -2}
    then we have
    \eqlabel{eq:Ebound}{\begin{aligned}
        \sum_{m=1}^{N-1}\mathsf{E}_{t,m} &\ll  \sum_{m=1}^{N-1}e^{-\{\mu K-(3\beta-2)\delta^{-1}+O(1)\}s_{m+1}}\\&< \sum_{m=1}^{\infty} e^{-(1+\del)^m\{\mu K-(3\beta-2)\delta^{-1}+O(1)\}T}\\&<\sum_{m=1}^{\infty} e^{-m\del\{\mu K-(3\beta-2)\delta^{-1}+O(1)\} T}\\&=\sum_{m=1}^{\infty} e^{-m\{(c\mu -(3\beta-2))+O(K^{-1})\} T}\ll 1
    \end{aligned}}
    for sufficiently large $K$.

    We now estimate $\mathsf{Y}_t$ using \eqref{eq:recallnontrivialcontraction}. Observe that for any $1\leq m\leq N$ and $(\xi_0, \xi_1,\cdots,\xi_{N})\in [3,3L]\times I^{N-1}\setminus\Theta$ we have 
    $$\Gamma^3 g_0n(\xi_0)n(\xi_1)a(s_1)\cdots n(\xi_m)a(s_m)\notin \cE_{s_{m+1},K}$$ from the construction of $\Theta$.
    We also have the following.
   \begin{lem}
    \label{lem:initialDiophantine}
    For $1 \leq |\xi| \leq 3L$, $\delta$ sufficiently small and $K$ sufficiently large, we have for $1 \leq \kappa < \tfrac{1}{2} K \delta$ that $g_0 n(\xi) \not\in \cE_{s_1, K}$.
  \end{lem}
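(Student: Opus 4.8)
The plan is to unwind the definition~\eqref{eq:cEdef} of $\cE_{s_1,K}$ and reduce the statement to a single Diophantine inequality. Since $n(\xi)n(\xi')=n(\xi+\xi')$, for $\xi'\in[-1,1]$ we have $g_0n(\xi)n(\xi')a(s_1)=g_0n(\zeta)a(s_1)$ with $\zeta=\xi+\xi'$, and in the range in which the lemma will be applied ($|\xi|\geq 3$) this gives $2\leq|\zeta|\leq 3L+1$; we assume $|\zeta|\geq 1$ throughout. Writing $\gamma=\gamma(g_0n(\zeta)a(s_1))=(\gamma_j)_j$ with $\gamma_j$ of bottom row $(c_j,d_j)$, membership $g_0n(\xi)\in\cE_{s_1,K}$ would force $\Delta_3(\gamma g_0n(\xi))<\e^{-Ks_1}$ for some such $\zeta$, so it is enough to prove $\Delta_3(\gamma g_0n(\xi))\geq\e^{-Ks_1}$ for every such $\zeta$; the extra requirement in~\eqref{eq:cEdef} (that $\gamma$ not be obtained from $\gamma(g_0n(\xi))$ by left multiplication with $\pm$ unipotent upper--triangular matrices) only shrinks the set of $\zeta$ to handle and can be dropped.

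I would begin with two observations. First, $\xi$ drops out of $\Delta_3(\gamma g_0n(\xi))$: the $j$-th block of $\gamma g_0n(\xi)$ has bottom row $(c_jx_j^{1/2},\,(c_jx_j\xi+d_j)x_j^{-1/2})$, of slope $\xi+d_j/(c_jx_j)$, so
\[
  \Delta_3(\gamma g_0n(\xi))=\min_{j}\,\prod_{i\neq j}\min\!\Big(1,\ \Big|\tfrac{d_i}{c_ix_i}-\tfrac{d_j}{c_jx_j}\Big|\Big),
\]
independent of $\xi$. Second, $\gamma$ is not too large: the $j$-th block of $g_0n(\zeta)a(s_1)$ sends the base point of the upper half-plane to the point $z_j$ of imaginary part $x_j\e^{-2s_1}$ and real part $x_j\zeta$, so reducing it into $\cD$ forces $|c_jz_j+d_j|\ll\e^{-s_1}$, whence $|c_j|\ll\e^{s_1}$ and, since $|\zeta|\leq 3L+1$, also $|d_j|\leq|c_j|x_j|\zeta|+O(\e^{-s_1})\ll L\e^{s_1}$ (this is the content of Lemma~\ref{lem:basicfundamentaldomain}(i) in the present normalisation). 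By Lemma~\ref{lem:basicfundamentaldomain}(ii),(iii), for $s_1$ large and $|\zeta|\geq 1$ we get $c_j\neq 0$ for all $j$ and $d_1,d_2\neq 0$, so none of the minima above are vacuous, and since each slope difference is $\leq|\zeta+\tfrac{d_i}{c_ix_i}|+|\zeta+\tfrac{d_j}{c_jx_j}|\ll\e^{-s_1}<1$ the outer $\min(1,\cdot)$'s are inactive.

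The core of the proof is the Diophantine lower bound. Writing
\[
  \tfrac{d_i}{c_ix_i}-\tfrac{d_j}{c_jx_j}=\frac{d_ic_j\tfrac{x_j}{x_i}-d_jc_i}{c_ic_jx_j}
\]
and using that $x_j/x_i$ is $\kappa$-Diophantine while the rational $d_jc_i/(d_ic_j)$ has denominator $\leq|d_ic_j|$, the factor $d_ic_j\tfrac{x_j}{x_i}-d_jc_i=d_ic_j(\tfrac{x_j}{x_i}-\tfrac{d_jc_i}{d_ic_j})$ has absolute value $\gg|d_ic_j|^{-\kappa}$; combined with $|c_j|\ll\e^{s_1}$ and $|d_i|\ll|c_i|L$ this gives $|\tfrac{d_i}{c_ix_i}-\tfrac{d_j}{c_jx_j}|\gg L^{-\kappa}\e^{-(\kappa+2)s_1}$ when $d_i\neq 0$, while if $d_i=0$ (possible for at most one index) the difference equals $|d_j/(c_jx_j)|\gg|c_j|^{-1}\gg\e^{-s_1}$, which is larger. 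Taking the product over the two indices $i\neq j$ and the minimum over $j$ yields $\Delta_3(\gamma g_0n(\xi))\gg L^{-2\kappa}\e^{-(2\kappa+4)s_1}$. Since $d=3$ forces $L\leq M^{d-2}=M$, so $\log L\leq t=\log M$, and $s_i=(1+\delta)s_{i+1}$ with $s_1+\dots+s_N=t$ forces $s_1\geq\tfrac{\delta}{1+\delta}t$, the desired bound $L^{-2\kappa}\e^{-(2\kappa+4)s_1}\geq\e^{-Ks_1}$ reduces, after taking logarithms and absorbing the implied constant as $o(1)$, to $\tfrac{2\kappa}{\delta}+O(\kappa)+o(1)\leq K$, which holds once $\kappa<\tfrac12K\delta$, $\delta$ is sufficiently small, and $t$ is sufficiently large.

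The step I expect to be the main obstacle is this Diophantine estimate together with the book-keeping surrounding it: one must extract from the $\kappa$-Diophantine property of the ratios $x_i/x_j$ a sharp lower bound for the pairwise differences of the reduced cusp slopes $d_j/(c_jx_j)$, and then propagate the size bounds $|c_j|\ll\e^{s_1}$, $|d_j|\ll L\e^{s_1}$ with care. The delicate point is that $L$ enters only through the factor $L^{-2\kappa}$, while $\log L\leq t$ and $s_1\asymp\delta t$, and it is exactly this that produces the threshold $\kappa<\tfrac12K\delta$: a cruder bound on $|d_j|$ (such as the $\ll L\e^{2s_1}$ a naive reading of Lemma~\ref{lem:basicfundamentaldomain} would give) weakens it. A secondary nuisance is the degenerate regime where $|\zeta|$ is near $0$, in which the slopes can all vanish and the estimate genuinely fails; this is why the hypothesis effectively needs $|\xi|$ bounded away from its lower endpoint.
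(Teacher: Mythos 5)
Your proposal is essentially the same as the paper's proof: reduce membership in $\cE_{s_1,K}$ to a lower bound on the slope differences $|\tfrac{d_i}{c_ix_i}-\tfrac{d_j}{c_jx_j}|$ for the representative $\gamma$, extract the bound from the $\kappa$-Diophantine property of $x_i/x_j$ using the size estimates $|c_j|\ll\e^{s_1}$ and $|d_j|\ll L\e^{s_1}$, and then trade $\log L\leq t$ against $s_1\gtrsim\delta t$ to obtain the threshold $\kappa<\tfrac12K\delta$. The only slip is arithmetic: from $|d_ic_j|\ll L\e^{2s_1}$ and $|c_ic_j|\ll\e^{2s_1}$ one gets $|\tfrac{d_i}{c_ix_i}-\tfrac{d_j}{c_jx_j}|\gg L^{-\kappa}\e^{-(2\kappa+2)s_1}$, not $\e^{-(\kappa+2)s_1}$ as you wrote (the paper records $\e^{-(2\kappa+1)s_1}$); since the linear-in-$\kappa$, $\delta$-independent part of the exponent is absorbed into $O(1)$ anyway, the final threshold is unchanged. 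Your handling of $d_i=0$ as a separate (easier) case is a harmless redundancy — the paper deduces $d_j\neq0$ for all $j$ directly from $|\xi+\xi_1|\geq 2$ — and you correctly note that the argument genuinely requires $|\xi|$ to be bounded away from $1$ (in the application $|\xi_0|\geq 3$), which the lemma statement glosses over.
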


  \begin{proof}
    Let $\gamma = \gamma(g_0 n(\xi)n(\xi_1)a(s_1))= \left(
    \begin{pmatrix}
      * & * \\
      c_j & d_j 
    \end{pmatrix}
    \right)_{\!\! j=1,2,3}$.
    We may assume that $c_j \neq 0$, then $|\xi + \xi_1 + \frac{d_j}{c_jx_j}| \ll |c_j|^{-1} \e^{-s_1}$, so in particular $\xi + \xi_1 \geq 2$ implies that $d_j \neq 0$.
    Now by the Diophantine condition on $\tfrac{x_i}{x_j}$, for $c_ic_j \neq 0$, we have
    \begin{equation}
      \label{eq:initialexceptional}
      L^{-\kappa} \e^{-(2\kappa + 1)s_1} \ll |\tfrac{d_i}{c_i x_i } - \tfrac{d_j}{c_j x_j}|.
    \end{equation}
    Since $t \leq \delta^{-1} s_1$, this implies $|\tfrac{d_i}{c_i x_i } - \tfrac{d_j}{c_j x_j}| > \e^{-\frac{1}{2} K s_1}$ as long as $\kappa <  \tfrac{1}{2} K \delta$ and $K$ and $\delta$ are sufficiently large and small respectively.
  \end{proof}

    Applying \eqref{eq:recallnontrivialcontraction} with $s=s_N,\cdots,s_1$ repeatedly, we get
    \eqlabel{eq:Goodestimate1}{\begin{aligned}
        \mathsf{Y}_t&<L^{-1}\int_{[3,3L]\times I^{N-1}\setminus\Theta}\alpha_3\big(g_0n(\xi_0)n(\xi_1)a(s_1)\cdots n(\xi_N)a(s_N)\big)^{\beta}dm_I^{\otimes N}(\xi_1,\cdots,\xi_{N})d\xi_0\\&\leq L^{-1}\int_{[3,3L]\times I^{N-1}\setminus\Theta} \widetilde{\alpha}\big(g_0n(\xi_0)n(\xi_1)a(s_1)\cdots n(\xi_N)a(s_N)\big)dm_I^{\otimes N}(\xi_1,\cdots,\xi_{N})d\xi_0\\&\leq e^{-3(2-\beta - \epsilon) t}L^{-1}\int_{3}^{3L}\widetilde{\alpha}(g_0n(\xi_0))d\xi_0%\\&\quad+e^{(\frac{d-1}{l-1}K_0+O(1))s_1-d(2-\beta)(s_{2}+\cdots+s_{N})}L^{-1}\int_{-L}^{L}\int_{I}\max_{1\leq k\leq d}\alpha_{d-1}\big(\pi_{d-1,k}(g)n(\xi_1)a(s_1)\big)^{\beta}d\xi_1
        \\&\qquad+\sum_{m=1}^{N}e^{(K+O(1))s_m-3(2-\beta)(s_{m+1}+\cdots+s_{N})}\\&\qquad\qquad\times L^{-1}\int_{3}^{3L}\int_{I^{m}}\alpha_{2}\big(g_0n(\xi_0)n(\xi_1)a(s_1)\cdots n(\xi_m)a(s_m)\big)dm_I^{\otimes m}(\xi_1,\cdots,\xi_{m})d\xi_0\\&\ll e^{-3(2-\beta-\epsilon) t}L^{-1}\int_{3}^{3L}\widetilde{\alpha}(g_0n(\xi_0))d\xi_0 +\sum_{m=1}^{N}e^{(K+O(1))s_m-3(2-\beta)(s_{m+1}+\cdots+s_{N})}.
    \end{aligned}}
    We choose a sufficiently small $\epsilon>0$ and let $N'$ be the largest integer such that $s_{N'}\geq \epsilon^{-1}s_N$. Note that $N-N'< \delta^{-1}\log\frac{1}{\eps}$, since $s_{N'}=(1+\delta)^{N-N'}s_N$. If $1\leq m\leq N'$, then $$s_{m+1}+\cdots+s_{N}\geq \left(1-\frac{s_N}{s_{N'}}\right)\delta^{-1}s_m\geq(1-\epsilon)\delta^{-1}s_m.$$
    Hence, if 
    \eqlabel{eq:Condition2}{c=K\delta<3(1-\epsilon)(2-\beta)} then we have
    \eqlabel{eq:exponentialsum0}{\begin{aligned}
        \sum_{m=1}^{N'}e^{(K+O(1))s_m-3(2-\beta)(s_{m+1}+\cdots+s_{N})}&\leq \sum_{m=1}^{N'}e^{\{(K+O(1))-3(1-\epsilon)(2-\beta)\delta^{-1}\}s_m}\\&\ll\sum_{m=1}^{\infty}e^{\{(K+O(1))-3(1-\epsilon)(2-\beta)\delta^{-1}\}(1+\delta)^mT}\\&\leq \sum_{m=1}^{\infty}e^{\{(c-3(1-\epsilon)(2-\beta))+O(K^{-1)}\}T}<\infty
    \end{aligned}}
    uniformly for sufficiently large $K$.  
     
    Now we fix sufficiently large $K$ so that \eqref{eq:Ebound} and \eqref{eq:exponentialsum0} hold. Since $\mu > \tfrac{1}{3}$, there exists $\beta>1$ such that $\mu^{-1}(3\beta-2)<3(2-\beta)$. Thus for sufficiently small $\epsilon>0$ we can also find a constant $c$ with $$\mu^{-1}(3\beta-2)<c<(1-\epsilon)3(2-\beta)$$ and $\delta=\frac{c}{K}$ satisfying both \eqref{eq:Condition1} and \eqref{eq:Condition2}. Note that this condition also ensures the assumption $\kappa<\frac{1}{2}K\delta=\tfrac{c}{2}$ in Lemma \ref{lem:initialDiophantine}.
    
    It is also easy to see
    \eqlabel{eq:exponentialsum2}{\begin{aligned}
        \displaystyle\sum_{m=N'+1}^{N}e^{(K+O(1))s_m-3(2-\beta)(s_{m+1}+\cdots+s_{N})}&\leq (N-N')e^{(2K+O(1))s_N}\\& <\delta^{-1}\log\frac{1}{\epsilon}e^{(2K+O(1))s_N}<\infty.
    \end{aligned}}
  
    Combining \eqref{eq:Goodestimate1}, \eqref{eq:exponentialsum0}, and \eqref{eq:exponentialsum2} with Lemma \ref{lem:initialtildealphaaverage} below, $\mathsf{Y}_t$ is uniformly bounded for any large $t$. In combination with \eqref{eq:Ebound}, it follows that
    $$\mathsf{Z}_t=\mathsf{Y}_t+\sum_{m=1}^{N}\mathsf{E}_{t,m}$$
    is also uniformly bounded for any large $t$, hence $\displaystyle\sup_{t>0}\mathsf{Z}_t<\infty$.
    This completes the proof of Theorem \ref{thm:escapeofmass} once we have the following.

  \begin{lem}
    \label{lem:initialtildealphaaverage}
    We have
    \begin{equation}
      \label{eq:initialtildealphaaverage}
      \tfrac{1}{L} \int_3^{3L} \tilde{\alpha}(g_0 n(\xi)) \dd \xi \ll L^{2\kappa -1}. 
    \end{equation}
  \end{lem}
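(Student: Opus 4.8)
The plan is to reduce the estimate to a count of near-coincidences among the ``convergents of $\xi$'' with respect to the three moduli $x_1,x_2,x_3$, where the $\kappa$-Diophantine hypothesis does the work. Throughout, $g_{0,j}$ denotes the $j$th component of $g_0$, and $(c_j,d_j)$ the bottom row of $\gamma_jg_{0,j}n(\xi)\in\cD$, where $\gamma(g_0n(\xi))=(\gamma_j)_j$.

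\emph{Step 1 (reduction to $\Delta_3$).} Since $\widetilde{\alpha}(g_0n(\xi))=\phi(\gamma(g_0n(\xi))g_0n(\xi))=\Delta_3(\gamma(g_0n(\xi))g_0n(\xi))^{-1}\rho_3(\gamma(g_0n(\xi))g_0n(\xi))^{-\beta}$, and the row $(c_j,d_j)$ is a primitive integer combination $r_j(x_j^{1/2},x_j^{1/2}\xi)+s_j(0,x_j^{-1/2})$, so $|(c_j,d_j)|\ge\min(x_j^{1/2},x_j^{-1/2})$, we get $\rho_3(\gamma(g_0n(\xi))g_0n(\xi))=\prod_j|(c_j,d_j)|\gg1$ uniformly in $\xi$. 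Hence $\widetilde{\alpha}(g_0n(\xi))\ll\Delta_3(\gamma(g_0n(\xi))g_0n(\xi))^{-1}$, and it suffices to prove
\[
  \int_3^{3L}\Delta_3\big(\gamma(g_0n(\xi))g_0n(\xi)\big)^{-1}\,\dd\xi\ll L^{2\kappa}.
\]

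\emph{Step 2 (piecewise structure).} From $(c_j,d_j)=(r_jx_j^{1/2},\,r_jx_j^{1/2}\xi+s_jx_j^{-1/2})$ and $\gcd(r_j,s_j)=1$, membership in $\cD$ forces $c_j^2+d_j^2\ll1$, hence $|r_j|\ll1$; and if $r_j\neq0$ then $|d_j/c_j|=|\xi+s_j/(r_jx_j)|\ll1$, so for $\xi$ bounded below by a constant one has $s_j\neq0$ and $|s_j|\asymp|\xi|$. As $\xi$ runs over $[3,3L]$, $\xi\mapsto\Gamma g_{0,j}n(\xi)$ traces a fixed-height horizontal segment of length $\asymp L$ in $\Gamma\backslash\mathbb{H}$, which meets only $O(L)$ of the $\Gamma$-translates of $\cD$; taking the common refinement over $j=1,2,3$ partitions $[3,3L]$ into $O(L)$ intervals, the \emph{pieces}, on each of which $(r_j,s_j)_j$ is constant, with each tuple arising from $O(1)$ pieces. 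On a piece $P$: if $r_i,r_j\neq0$ then $d_i/c_i-d_j/c_j=s_i/(r_ix_i)-s_j/(r_jx_j)$ does not depend on $\xi$, so $\delta_{ij}:=\min(1,|d_i/c_i-d_j/c_j|)$ is constant, while if some $r_j=0$ then $c_j=0$ and $\Delta_3(P)$ reduces to a single $\delta_{kl}(P)$ or to $1$; in all cases $\Delta_3$ is constant on $P$. If some $r_j\neq0$ on $P$ then $d_j/c_j$ has slope one in $\xi$ and stays bounded, so $|P|\ll1$; if all $r_j=0$ then $\Delta_3(P)=1$. Finally, for $P\subset[X,2X]$ with $r_i,r_j\neq0$, expressing $s_i/(r_ix_i)-s_j/(r_jx_j)$ over the common denominator $r_ir_jx_ix_j$ and applying the $\kappa$-Diophantine lower bound for $x_i/x_j$ to the integer $s_ir_j$ (of size $\asymp X$) yields $\delta_{ij}(P)\gg X^{-\kappa}$; in particular no two of $d_1/c_1,d_2/c_2,d_3/c_3$ coincide.

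\emph{Step 3 (count and sum).} Write $\int_3^{3L}\Delta_3^{-1}\,\dd\xi=\sum_P|P|\,\Delta_3(P)^{-1}$. The pieces with all $r_j=0$ contribute at most $\sum|P|\le3L$; for the rest $|P|\ll1$, so I would prove $\sum_{P\subset[X,2X]}\Delta_3(P)^{-1}\ll X^{2\kappa}$ and sum over dyadic $X\le3L$. For pieces with all $r_j\neq0$, use $\Delta_3(P)^{-1}\le(\delta_{12}\delta_{13})^{-1}+(\delta_{12}\delta_{23})^{-1}+(\delta_{13}\delta_{23})^{-1}$ together with the basic estimate, for $0<\epsilon_1,\epsilon_2\le1$,
\[
  \#\{P\subset[X,2X]:\delta_{12}(P)\le\epsilon_1,\ \delta_{13}(P)\le\epsilon_2\}\ll X\min(\epsilon_1,\epsilon_2)^{1/\kappa}+1,
\]
which holds because such a piece forces $\|s_1\theta\|\ll\epsilon_1$ and $\|s_1\theta'\|\ll\epsilon_2$, where $\theta=r_2x_2/(r_1x_1)$ and $\theta'=r_3x_3/(r_1x_1)$ are $\kappa$-Diophantine (rational multiples of $x_2/x_1$, $x_3/x_1$), so admissible values of $s_1\asymp X$ are $\gg\min(\epsilon_1,\epsilon_2)^{-1/\kappa}$-separated, while $s_2$, $s_3$ and $(r_1,r_2,r_3)$ are then determined up to $O(1)$. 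Decomposing dyadically in $\delta_{12}(P)\asymp2^{-a}$, $\delta_{13}(P)\asymp2^{-b}$ with $0\le a,b\ll\kappa\log X$ (using $\delta_{ij}(P)\gg X^{-\kappa}$) gives
\[
  \sum_{P\subset[X,2X]}(\delta_{12}\delta_{13})^{-1}\ll\sum_{a,b}2^{a+b}\big(X2^{-\max(a,b)/\kappa}+1\big)\ll X\cdot X^{2\kappa-1}+X^{2\kappa}\ll X^{2\kappa},
\]
and similarly for the other two products and, via the one-variable analogue of the count, for the pieces with exactly one or two nonzero $r_j$. Summing over dyadic $X\le3L$ yields $\int_3^{3L}\Delta_3^{-1}\,\dd\xi\ll L+L^{2\kappa}\ll L^{2\kappa}$, which is the claim after dividing by $L$.

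\emph{Main obstacle.} The delicate point is Step 2: making precise that $[3,3L]$ splits into $O(L)$ pieces with each approximant tuple $(r_j,s_j)_j$ used only $O(1)$ times, and correctly treating the pieces on which some $c_j$ vanishes (where $\gamma_j$ is a translation, so the relevant $\delta$ is capped at $1$) — and, relatedly, checking that $s_j\neq0$ throughout the range of integration, so that $\Delta_3$ never vanishes along $g_0n(\xi)$. This amounts to the geometric statement that a fixed-height horizontal segment of length $\ell$ in $\Gamma\backslash\mathbb{H}$ crosses the $\Gamma$-translates of $\partial\cD$ only $O(\ell)$ times, combined with an explicit description of the shape of $\cD$; once it is in place the counting argument above closes the proof.
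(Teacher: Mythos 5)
Your proof is correct but takes a genuinely different route from the paper's. The common starting point is the reduction $\tilde\alpha(g_0n(\xi))\ll\Delta_3(\gamma(g_0n(\xi))g_0n(\xi))^{-1}$ (since $\rho_3\gg1$ uniformly) and the parametrisation of the bottom rows $(c_j,d_j)=(r_jx_j^{1/2},\,r_jx_j^{1/2}\xi+s_jx_j^{-1/2})$ with $|r_j|\ll1$, $|s_j|\asymp\xi$. From there the two arguments diverge. The paper immediately relaxes $\Delta_3^{-1}$ to $\max_{\{i,j\}}\min(1,|m_i/x_i-m_j/x_j|)^{-2}$ with $m_j=-s_j$ the nearest integer to $x_j\xi$, and controls, for a fixed pair $\{i,j\}$, the sum $\sum_{m_i,m_j\leq3L}|m_i/x_i-m_j/x_j|^{-2}$ by choosing one rational approximant $r/q$ to $x_i/x_j$ with $q\leq3L$ and splitting along $h=m_iq-m_jr$: the $h\neq0$ terms give $O(q)\leq O(L)$, while the $h=0$ terms $(m_i,m_j)=(mr,mq)$ give $O(q^{2\kappa}/L)\leq O(L^{2\kappa-1})$ from the Diophantine bound $|m_j\tfrac{x_i}{x_j}-m_i|=m\,|q\tfrac{x_i}{x_j}-r|\gg m\,q^{-\kappa}$. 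Your argument instead keeps the genuine product $\delta_{12}\delta_{13}$, localises both factors dyadically, and counts admissible tuples via a separation argument: admissible $s_1$ values with $\|s_1\theta\|\ll\epsilon_1$, $\|s_1\theta'\|\ll\epsilon_2$ are $\gg\min(\epsilon_1,\epsilon_2)^{-1/\kappa}$ apart. Both invoke the Diophantine hypothesis at the same essential point and yield $X^{2\kappa}$ per dyadic block; the paper's route is a little slicker because a single approximant $r/q$ does all the counting in one blow, whereas yours pays with a double dyadic sum and two auxiliary Diophantine numbers $\theta,\theta'$, but in exchange retains the honest product $\Delta_3^{-1}$ rather than the cruder $\max\delta_{ij}^{-2}$.

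On your flagged ``main obstacle'': the geometric claim that the fixed-height segment crosses only $O(L)$ translates of $\partial\cD$ is not actually required. All you use is that the set of $\xi\in[3,3L]$ realising a given tuple $(r_j,s_j)_j$ with some $r_j\neq0$ has Lebesgue measure $O(1)$ — immediate from $|r_jx_j\xi+s_j|\ll1$ — combined with your count of tuples; the paper sidesteps the fundamental-domain-crossing language entirely by listing the possible $\gamma_j$ explicitly. Your handling of the pieces where some $r_j=0$ (so that $\Delta_3$ collapses to a single $\delta_{kl}$ or to $1$), and the observation that $\xi\geq3$ forces $s_j\neq0$ whenever $r_j\neq0$, are both correct and close the argument.
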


  \begin{proof}
    Let $m_j$ be the integer closest to $x_j \xi$.
    Then either $\gamma_j(g_0n(\xi)) =
    \begin{pmatrix}
      1 & -m_j \\
      0 & 1
    \end{pmatrix}
    $ or $\gamma_j(g_0n(\xi)) =
    \begin{pmatrix}
      0 & -1 \\
      1 & -m_j
    \end{pmatrix}
    $.
    %depending on whether $x_j \sqrt{(\xi - m_j)^2 + 1} \geq 1$ or $x_j \sqrt{(\xi - m_j)^2 + 1} < 1$.
    We may assume that for some $i\neq j$ we have the second case
    since otherwise $\Delta_3(g_0n(\xi))^{-1} =1$ and $\alpha_3(g_0n(\xi)) \asymp 1$.

    Now $\Delta_3(g_0 n(\xi))^{-1} \ll \max_{i\neq j} |\tfrac{m_i}{x_i} - \tfrac{m_j}{x_j}|^{-2}$, so we may bound the left side of \eqref{eq:initialtildealphaaverage} by
    \begin{equation}
      \label{eq:mimjsum}
      \ll \tfrac{1}{L} \sum_{1\leq i\neq j\leq 3} \sum_{3\leq m_i,m_j \leq 3L} |\tfrac{m_i}{x_i} - \tfrac{m_j}{x_j}|^{-2} . 
    \end{equation}
    We let $\tfrac{r}{q}$ be the best rational approximation to $\frac{x_i}{x_j}$ with denominator $\leq 3L$, so that $|\tfrac{x_i}{x_j} - \tfrac{r}{q}| \leq \tfrac{1}{3Lq}$ and $|\tfrac{m_i}{m_j} - \tfrac{x_i}{x_j}| \gg |\tfrac{m_i}{m_j} - \tfrac{r}{q}|$ for $\tfrac{m_i}{m_j} \neq \tfrac{r}{q}$.

    Set $h = m_iq - m_j r$.
    We note that $m_j$ is determined modulo $q$ from $h$, and then $m_i$ is determined from $h$ and $m_j$.
    The contribution from $h \neq 0$ is given by
    \begin{equation}
      \label{eq:mimjhnot0}
      \tfrac{1}{L} q^2 \sum_{h\neq 0} h^{-2} \sum_{\substack{ m \leq 3L \\ m \equiv h \bar r \bmod q}} 1 \ll q. 
    \end{equation}
    For $m_i q - m_j r = 0$, ie $m_j = mq$, $m_i = mr$, $m \ll \tfrac{L}{q}$, we note that since $\tfrac{x_i}{x_j}$ is $\kappa$-Diophantine, $|\tfrac{m_i}{x_i} - \tfrac{m_j}{x_j}| \geq (mq)^{-\kappa}$.
    The contribution to \eqref{eq:initialtildealphaaverage} is therefore $\tfrac{1}{L} q^{2\kappa}$. 
  \end{proof}

\section{Avoidance estimate}
\label{sec:avoidance}

In this section we prove the following avoidance estimate, which is used in the proof of Theorem \ref{thm:escapeofmass}, see \eqref{eq:thetabarint}.
\begin{prop}\label{prop:avoidanceMargulis}
  There exists $\kappa_0 > 1$ and $\mu > \tfrac{1}{3}$ such that for $1 \leq \kappa < \kappa_0$ there is $\beta = \beta(\kappa) > 1$ with the following property.
  For $g_0 = \left(\begin{pmatrix}    x_j^{\frac{1}{2}} & 0 \\
    0 & x_j^{-\frac{1}{2}}
  \end{pmatrix}
  \right)_{\!\! j=1,2,3}$ with $\tfrac{x_i}{x_j}$ $\kappa$-Diophantine, we have
  \begin{equation}
    \label{eq:avoidanceMargulis}
    \int_{3 < |\xi| \leq  L} \alpha_3\big(gn(\xi)a(t)\big)^{\beta}\mathds{1}_{\cE_{s,K}}(\Gamma gn(\xi)a(t))d\xi\ll e^{-(\mu K+O(1))s}L
  \end{equation}
  uniformly for $K, s \geq 0$. 
\end{prop}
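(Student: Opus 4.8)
The plan is to pass first to the larger, more tractable set $\tilde\cE_{s,K}$ of Lemma~\ref{lem:altexceptionalset}, reducing to an estimate for $\int_{3<|\xi|\le L}\alpha_3(g_0n(\xi)a(t))^{\beta}\,\mathds{1}_{\tilde\cE_{s,K}}(\Gamma^3 g_0n(\xi)a(t))\,d\xi$. I would then unfold the defining condition of $\tilde\cE_{s,K}$: for a contributing $\xi$ there is $\gamma\in\Gamma^3$ with $\Delta_3(\gamma\,g_0n(\xi)a(t))<\e^{-Ks}$ and $\tfrac1{10}\e^{-s}<\max_k\rho\big(\gamma_k(g_0n(\xi)a(t))_k\big)\le 10\e^{s}$. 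Applying the pigeonhole principle to the three pairs of indices occurring in $\Delta_3$, one pair---say $\{1,2\}$ after relabelling---has $\min(1,|\tfrac{d_1}{c_1}-\tfrac{d_2}{c_2}|)<\e^{-Ks/2}$, where $(c_j,d_j)$ is the bottom row of $\gamma_j(g_0n(\xi)a(t))_j$, while both factors $1,2$ obey the shell bound. Splitting into the three corresponding contributions, it suffices to treat the $\{1,2\}$ one.

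For that contribution the relevant dynamics takes place on $\Gamma^2\backslash G^2$ under the projection $\pi_{12}$ to the first two factors. The two distinguished bottom rows having lengths in dyadic ranges $\asymp\delta_1,\delta_2\le 10\e^{s}$ and directions aligned to within $\e^{-Ks/2}$ amounts to $\pi_{12}(g_0n(\xi)a(t))$ lying within a small distance $\asymp\e^{-Ks/2}$ (after normalisation by the $\delta_j$ and the heights) of a closed orbit $Y=\Gamma^2 gH$ of the diagonally embedded $H=\SL(2,\R)$, with commensurability matrix having integer entries of size $\ll\e^{O(s)}$; hence $\operatorname{Vol}(Y)$ is bounded by a fixed power of $\e^{s}$ (independently of $K$) and there are at most $\ll\e^{O(s)}$ admissible $Y$. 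I would stratify dyadically by the value $\alpha_3(g_0n(\xi)a(t))\asymp Y_0$: on such a stratum one has the pointwise bound $\alpha_3^{\beta}\asymp Y_0^{\beta}$, the deeper strata requiring finer proximity (smaller $r$) and permitting larger but still polynomially controlled closed-orbit volumes, so that the dyadic sum in $Y_0$ will converge; here Lemma~\ref{lem:alpha3supbound}, applied after reducing $\xi$ modulo the lattice via $\Gamma^3g_0n(\xi)a(t)=\Gamma^3g_0n(\xi')a(t)$ with $|\xi'|$ bounded, controls the depth in terms of the Diophantine data.

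For a fixed admissible $Y$ one is left to bound $\operatorname{meas}\{\xi\in(3,L]:\dist_{\Gamma^2\backslash G^2}(\pi_{12}(g_0n(\xi)a(t)),Y)\le r\}$. I would invoke \cite[Theorem~1]{LMW2023a} as packaged in Proposition~\ref{prop:effectiveequidistribution}: partitioning $(3,L]$ into unit subintervals and applying the Lindenstrauss--Mohammadi--Wang dichotomy on each, the closed-orbit alternative is excluded by Lemma~\ref{lem:periodicorbits}, which keeps $g_0n(\xi)$---a fortiori $g_0n(\xi)a(t)$---polynomially far from every closed orbit, so one gets polynomial effective equidistribution and the measure is $\ll L\big(r\operatorname{Vol}(Y)+\operatorname{Vol}(Y)^{O(1)}\e^{-\delta t}\big)$. (Equivalently one applies the equidistribution to a fixed smooth majorant of $\alpha_3^{\beta}\mathds{1}_{\tilde\cE_{s,K}}$ supported in the union of the tubes, whose Haar integral is $\ll\e^{-cKs}$ by the volume count.) Summing over the $\ll\e^{O(s)}$ orbits, the three pairs, and the dyadic strata in $Y_0$, the factor $\e^{-Ks}$ coming from $\Delta_3<\e^{-Ks}$ dominates all of the polynomial-in-$\e^{s}$ and polynomial-in-$Y_0$ losses once $K$ is taken large, yielding $\ll\e^{-(\mu K+O(1))s}L$ with a fixed $\mu>\tfrac13$; the value of $\mu$ emerges from the effective exponent of Proposition~\ref{prop:effectiveequidistribution} together with the volume and tube exponents, and $\mu>\tfrac13$ is precisely the threshold used in the proof of Theorem~\ref{thm:escapeofmass}.

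The hard part will be making this uniform in the ratio $s/t$ while carrying the unbounded weight $\alpha_3^{\beta}$. For $s$ small relative to $t$ the effective rate supplies a genuine exponential gain, but one must prevent the number of relevant closed orbits and the tube normalisation---both only polynomial in $\e^{s}$---from absorbing it, which is what pins down the admissible constants $c,\mu$ and the choice of shell scale. For $s$ comparable to $t$ no effective gain is available, and one must instead combine the crude bound $\alpha_3(g_0n(\xi)a(t))\ll\e^{O(t)}=\e^{O(s)}$ from Lemma~\ref{lem:alpha3supbound} with the estimate $\operatorname{meas}\{\xi\in(3,L]:\Delta_3(\gamma\,g_0n(\xi)a(t))<\e^{-Ks}\text{ for an admissible }\gamma\}\ll\e^{-cKs}L$; reconciling the two regimes in a single bound---and in particular handling the range $|\xi'|\sim\e^{-t}$ of the periodicity reduction, where $g_0n(\xi)a(t)$ is extremely deep in the cusp---is where the bulk of the technical work lies.
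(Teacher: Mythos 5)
Your high-level plan---pass to $\tilde\cE_{s,K}$, project to $\Gamma^2\backslash G^2$, interpret proximity as a tube around a closed diagonal $H$-orbit whose volume is bounded polynomially in $\e^s$, apply \cite[Theorem~1]{LMW2023a} via Proposition~\ref{prop:effectiveequidistribution} and Lemma~\ref{lem:periodicorbits}---is indeed the engine that runs the paper's proof in the ``$s$ small relative to $t$'' regime (first half of the proof of Proposition~\ref{prop:rank2avoidance}). But there is a concrete gap at the step where you would carry the unbounded weight $\alpha_3^\beta$ through the projection, and it is exactly the point you flag as ``the hard part'' without resolving. Writing $\alpha_3(g) = \rho(\gamma_1 g_1)^{-1}\rho(\gamma_2 g_2)^{-1}\rho(\gamma_3 g_3)^{-1}$, to bound this by $\e^{O(s)}\alpha_2(\pi(g))^\beta$ for a two-coordinate projection $\pi$ you must know that the \emph{discarded} coordinate has $\rho \gg \e^{-s}$, which is precisely the role of the index $k$ in the paper's decomposition $\cE_{s,K}\subseteq\bigcup_{i\neq j, k}\cE_{s,K}(i,j,k)$. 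Your pigeonhole is only on the pair $\{i,j\}$ of the small $\Delta$-factor, and does not track $k$. When $k\notin\{i,j\}$ (the paper's $k=1$ with $i=2,j=3$, equation \eqref{eq:rank2reduction1}) your reduction goes through directly. When $k\in\{i,j\}$ the projection $\pi_k$ that controls the weight destroys the factor $\Delta_{ij}$, and no single projection does both jobs; the paper handles this with a further decomposition into $K_1,K_2$ and a H\"older interpolation \eqref{eq:holder1}, \eqref{eq:verysmalleta2} across three different projections, which is where the nontrivial constraint determining $\kappa_0$ and $\mu$ actually enters. Your sketch has no analogue of this.

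A second, distinct issue is your treatment of the ``$s$ comparable to $t$'' regime. The pointwise bound from Lemma~\ref{lem:alpha3supbound} gives $\alpha_3 \ll \e^{(3\kappa/(\kappa+1)+O(\eta))t}$, so $\alpha_3^\beta\ll\e^{c\beta t}$; when $s\asymp t$ this is $\e^{c'\beta s}$ with $c'$ \emph{not} small, and the measure estimate for $\Delta_3<\e^{-Ks}$ would have to dominate $\e^{c'\beta s}$ for all $K\ge 1$, which it cannot. The paper avoids this by never using a pointwise bound on $\alpha_3$ inside the proof of Proposition~\ref{prop:avoidanceMargulis}: instead, the reduction to $\Gamma^2\backslash G^2$ is performed first (Proposition~\ref{prop:rank2avoidance}), and there the large-$s$ range is handled by the explicit counting Lemma~\ref{lem:cCcount} for the set $\cC_{t,\eta,L}(C_1,C_2)$, together with the dyadic decomposition on $(\delta_1,\delta_2)$ in \eqref{eq:avoidancedyadicdecomp}--\eqref{eq:avoidancedyadicdecomp1}; the Diophantine condition then supplies a gain $\e^{-\kappa^{-2}Ks}$ independently of the size of $s/t$. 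You would need some replacement for this lattice-point count; the effective equidistribution theorem alone does not give anything when $s$ and $t$ are comparable, and the smooth-majorant route you suggest fails here because the Sobolev norm of a majorant of $\alpha_3^\beta\mathds{1}_{\tilde\cE_{s,K}}$ is not controlled when the cusp excursion is as large as $\e^{O(t)}$.
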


\subsection{Avoidance in $\Gamma^2 \backslash G^2$}
\label{sec:rank2avoidance}

Towards proving Proposition \ref{prop:avoidanceMargulis}, we prove the following in $\Gamma^2 \backslash G^2$.
For $g =\left(
\begin{pmatrix}
  * & * \\
  c_j & d_j 
\end{pmatrix}
\right)_{\!\! j=1,2}
\in G^2$, we let $\Delta_2(g) = \min(1, |\tfrac{d_1}{c_1} - \tfrac{d_2}{c_2}|)$ and we set
\begin{multline}
  \label{eq:cE'def}
  \cE'_{s,K} = \{ \Gamma^2 g \in \Gamma^2 \backslash G^2: \Delta_2(\gamma g) \leq \e^{-Ks}\textrm{ for some } \gamma \in \Gamma^2\\
  \textrm{ such that } \max_{j=1,2} \rho(\gamma_j g_j) < 10 \e^s \}. 
\end{multline}
We note the similarty between $\cE'_{s,K}$ and $\cE_{s,K}$ in (\ref{eq:tildecEdef}) part from the lack of the lower bound for $\rho$ in (\ref{eq:cE'def}).
Roughly speaking, $\cE'_{s,K}$ is obtained from $\cE_{s,K}$ by removing the coordinate $j$ such that $\rho(\gamma_j g_j) > \tfrac{1}{10} \e^{-s}$. 
\begin{prop}
  \label{prop:rank2avoidance}
  Let $g_0 = \left(\begin{pmatrix}    x_j^{\frac{1}{2}} & 0 \\
    0 & x_j^{-\frac{1}{2}}
  \end{pmatrix}
  \right)_{\!\! j=1,2}$ with $\tfrac{x_1}{x_2}$ $\kappa$-Diophantine.
  Then for any $\beta < 1 + \kappa^{-2}$ and $1\leq L \ll \e^t$, we have
  \begin{equation}
    \label{eq:rank2moment1}
    \int_{1 \leq |\xi| \leq L} \alpha_2(g_0 n(\xi) a(t))^\beta \mathbbm{1}_{\cE'_{s,K}}(gn(\xi)a(t)) \dd \xi \ll L \e^{-(\kappa^{-2} K + O(1))s} .
  \end{equation}
\end{prop}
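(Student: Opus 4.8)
The plan is to unpack membership in $\cE'_{s,K}$ into a simultaneous Diophantine condition on $\xi$, feed this into the $\kappa$-Diophantine hypothesis on $x_1/x_2$ to bound both $\alpha_2$ and the measure of the exceptional set at a given height, and then sum over a dyadic decomposition. Write $g_0n(\xi)a(t)=(g_j)_{j=1,2}$ with $g_j=\left(\begin{smallmatrix} x_j^{1/2}\e^{-t} & x_j^{1/2}\xi\e^{t}\\ 0 & x_j^{-1/2}\e^{t}\end{smallmatrix}\right)$. If $\gamma=(\gamma_j)\in\Gamma^2$ witnesses $\cE'_{s,K}$ and $(p_j,q_j)$ is the coprime integer bottom row of $\gamma_j$, then $\rho(\gamma_jg_j)<10\e^s$ reads $|p_j|\ll\e^{s+t}$ and $|\xi+\tfrac{q_j}{p_jx_j}|\ll\e^{s-t}|p_j|^{-1}$, while $\Delta_2(\gamma g)\leq\e^{-Ks}$ reads $|\tfrac{q_1}{p_1x_1}-\tfrac{q_2}{p_2x_2}|\leq\e^{-Ks-2t}$. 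The refinement I would use is: if $\alpha(g_j)\geq C_0\e^s$ for a suitable absolute $C_0$, then — since $\Z^2g_j$ has covolume $1$ and first minimum $\alpha(g_j)^{-1}$, hence second minimum $\asymp\alpha(g_j)$ — the only primitive vectors of $\Z^2g_j$ of length $<10\e^s$ are $\pm$(its shortest vector), so $(p_j,q_j)$ is, up to sign, the bottom row of $\gamma_j(g_0n(\xi)a(t))$, whence $|p_j|\ll\e^t\alpha(g_j)^{-1}$ and the window sharpens to $|\xi+\tfrac{q_j}{p_jx_j}|\ll\e^{-t}\alpha(g_j)^{-1}|p_j|^{-1}$. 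Throughout $t$ may be taken large; $p_j=0$, small $|\xi|$, and $\alpha(g_j)<C_0\e^s$ are degenerate regimes treated separately, costing only a factor $\e^{O(s)}$.

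Since $q_j/(p_jx_j)$ lies within $o(1)$ of $-\xi$ and $|\xi|\geq1$, we have $|q_j|\asymp|\xi|\,|p_j|$, so $q_1p_2\neq0$; writing $\tfrac{q_1}{p_1x_1}-\tfrac{q_2}{p_2x_2}=x_1\cdot\tfrac{q_1p_2(x_2/x_1)-q_2p_1}{p_1p_2x_1x_2}$ and applying the $\kappa$-Diophantine property of $x_2/x_1$ to the denominator $q_1p_2$ gives $|\tfrac{q_1}{p_1x_1}-\tfrac{q_2}{p_2x_2}|\gg|\xi|^{-\kappa}|p_1p_2|^{-\kappa-1}$. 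Comparison with $\e^{-Ks-2t}$ yields $|p_1p_2|\gg Q_*:=|\xi|^{-\kappa/(\kappa+1)}\e^{(Ks+2t)/(\kappa+1)}$, and combined with $|p_j|\ll\e^t\alpha(g_j)^{-1}$ this forces, on $\cE'_{s,K}\cap\{1\leq|\xi|\leq L\}$,
\begin{equation*}
  \alpha_2(g_0n(\xi)a(t))\ \ll\ \e^{O(s)}\,|\xi|^{\kappa/(\kappa+1)}\e^{(2t\kappa-Ks)/(\kappa+1)}\ =:\ \bar Y .
\end{equation*}
So $\alpha_2$ is bounded on the exceptional set, and the integral in \eqref{eq:rank2moment1} only sees $\alpha_2\leq\bar Y$.

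Next I would decompose $\cE'_{s,K}\cap\{1\leq|\xi|\leq L\}$ dyadically according to $\alpha(g_j)\asymp Y_j$ and $|p_j|\asymp P_j$, so $\alpha_2\asymp Y:=Y_1Y_2\lesssim\bar Y$, $P_j\ll\e^tY_j^{-1}$, $P_1P_2\gg Q_*$. On each piece $(p_j,q_j)$ is a function of $\xi$, so the measure is bounded by $\sum\min(\epsilon_1,\epsilon_2)$ over admissible $4$-tuples $(p_1,q_1,p_2,q_2)$, with $\xi$-window half-lengths $\epsilon_j\ll\e^{-t}Y_j^{-1}P_j^{-1}$, hence $\min(\epsilon_1,\epsilon_2)\ll\e^{-t}(YP_1P_2)^{-1/2}$. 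The number of such tuples with $-q_j/(p_jx_j)\in[1,L]$, $|p_j|\asymp P_j$, $|\tfrac{q_1}{p_1x_1}-\tfrac{q_2}{p_2x_2}|\leq\e^{-Ks-2t}$ is $\ll 1+LP_1^2P_2^2\e^{-Ks-2t}$ (for fixed $(p_1,q_1)$ there is $\ll1$ partner, the $\e^{-Ks-2t}$-window being too short to contain two denominator-$\asymp P_2$ fractions, and the proportion of $(p_1,q_1)$ admitting a partner is $\ll P_2^2\e^{-Ks-2t}$). Multiplying and summing the scales $P_1,P_2$ — using $P_1P_2\leq\e^{2t}/Y$ for the $LP_1^2P_2^2\e^{-Ks-2t}$-term and $P_1P_2\gg Q_*$ for the ``$1$''-term — gives, up to factors $t^{O(1)}$,
\begin{equation*}
  \mathrm{meas}\{1\leq|\xi|\leq L:\ g_0n(\xi)a(t)\in\cE'_{s,K},\ \alpha_2\asymp Y\}\ \ll\ L\big(Y^{-2}\e^{-Ks}+Y^{-1/2}\e^{-(2t\kappa+4t+Ks)/(2(\kappa+1))}\big).
\end{equation*}
Hence $\int_{1\leq|\xi|\leq L}\alpha_2^\beta\mathbbm 1_{\cE'_{s,K}}\dd\xi\ll\sum_{Y\lesssim\bar Y,\ \mathrm{dyadic}}Y^\beta\,\mathrm{meas}\{\alpha_2\asymp Y,\cE'_{s,K}\}$. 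The $Y^{-2}\e^{-Ks}$-term contributes $\ll L\e^{-Ks}$ since the dyadic sum $\sum_YY^{\beta-2}$ converges for $\beta<2$. For the $Y^{-1/2}$-term, $\beta>1$ makes the sum dominated by $Y\asymp\bar Y$, giving $\ll L\bar Y^{\beta-1/2}\e^{-(2t\kappa+4t+Ks)/(2(\kappa+1))}$; substituting $\bar Y$ and using $1\leq L\ll\e^t$, the exponent of $\e^t$ is $\tfrac{2(\kappa(\beta-1)-1)}{\kappa+1}$, negative once $\beta<1+\kappa^{-1}$ (in particular for $\beta<1+\kappa^{-2}$), while the exponent of $Ks$ works out to $-\tfrac{\beta}{\kappa+1}$. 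Choosing $\beta$ in the range $(\tfrac{\kappa+1}{\kappa^2},\,1+\kappa^{-2})$ — nonempty for $\kappa>1$, with $\beta\to2^-$ at $\kappa=1$ — makes $\tfrac{\beta}{\kappa+1}\geq\kappa^{-2}$, and \eqref{eq:rank2moment1} follows with exponent $\kappa^{-2}K+O(1)$.

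The heart of the matter, and the step I expect to be the main obstacle, is the measure estimate: one must count the admissible integer $4$-tuples sharply — in particular in the regime where the closeness condition forces ``$0$ or $1$'' valid $q_2$ per $(p_1,q_1,p_2)$, where a naive count loses a power of $\e^t$ — while keeping track of the correlation between the two $\xi$-windows. Getting both the power of $Y$ (hence the admissible range of $\beta$, and ultimately the constraint $\beta<1+\kappa^{-2}$) and the exponent of $\e^{-Ks}$ right, uniformly for all $1\leq L\ll\e^t$, is where the bulk of the work lies; the degenerate regimes still satisfy the height bound $\alpha_2\ll\e^{O(s)}\bar Y$ and are incorporated by the same summation together with the crude estimates of Section \ref{sec:smallmoments}.
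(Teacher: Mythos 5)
Your overall strategy — unpack membership in $\cE'_{s,K}$ into rational approximations to $-\xi$, use the $\kappa$-Diophantine hypothesis to restrict the admissible denominators, dyadically decompose in the cusp heights and denominator scales, and count integer $4$-tuples — is the same as the paper's, which feeds the same data into Lemma~\ref{lem:cCcount}. However, there are two genuine gaps.

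\textbf{The count $\ll 1 + LP_1^2P_2^2\e^{-Ks-2t}$ is not justified.} The clause ``the proportion of $(p_1,q_1)$ admitting a partner is $\ll P_2^2\e^{-Ks-2t}$'' is a small-scale equidistribution statement for the \emph{discrete} set $\{q_1/(p_1x_1):\, p_1\asymp P_1\}$ against the union of $\e^{-Ks-2t}$-windows around $P_2$-denominator fractions, and the ``$+1$'' caveat does not repair it. If $q$ is the denominator of the best rational approximation to $x_2/x_1$ with $q \le LP_1P_2$, then setting $h = c_1d_2 r - c_2 d_1 q$ one finds $c = c_1 d_2$ constrained to a residue class $\bmod\, q$ with $|h| \ll \e^{-Ks} q P_1P_2 \e^{-2t} + 1$; the resulting count is $\ll \tfrac{1}{q}\bigl(\e^{-Ks}qP_1P_2\e^{-2t} + 1\bigr)(LP_1P_2)^{1+\epsilon}$, i.e.\ the constant term is $\asymp (LP_1P_2)^{1+\epsilon}/q$, which the Diophantine condition only bounds by $(LP_1P_2)^{1 - \kappa^{-1}+\epsilon}$ — unbounded for $\kappa > 1$, not $\ll 1$. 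This is precisely Lemma~\ref{lem:cCcount}, and replacing $(LP_1P_2)^{1+\epsilon}/q$ by $1$ changes the downstream bookkeeping substantially: with the correct count, the exponent of $\e^{Ks}$ in your ``residual'' term becomes $\tfrac{1 - \beta - \kappa^{-1}}{\kappa+1}$ rather than $-\tfrac{\beta}{\kappa+1}$, which reaches $-\kappa^{-2}$ only in the limit $\beta \to (1+\kappa^{-2})^-$, not over the range $(\tfrac{\kappa+1}{\kappa^2},\, 1+\kappa^{-2})$ as you claim. The conclusion survives, but only because the paper's version of this count and bookkeeping is exactly tuned to give $\kappa^{-2}$ at the endpoint.

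\textbf{The small-$s$ regime is not addressed.} Your dyadic sums lose $t^{O(1)}$ factors, the divisor-sum step loses $(LP_1P_2)^{\epsilon_2} \ll \e^{O(\epsilon_2 t)}$, and the restriction to $\alpha_2 \le \e^{\epsilon_1 t}$ via Lemma~\ref{lem:rank2moments1} costs $\e^{-\epsilon_1 \epsilon_1' t}$. All of these are absorbed into $\e^{O(1)s}$ only when $s \gg t$; for $s$ small relative to $t$ (the regime where $\cE'_{s,K}$ is a ``fat'' set), the dyadic/counting approach does not deliver $\e^{-(\kappa^{-2}K + O(1))s}$. The paper handles this regime separately by effective equidistribution (Proposition~\ref{prop:effectiveequidistribution}, via \cite[Theorem~1]{LMW2023a}): one majorizes $\mathbbm{1}_{\cE'_{s,K}}$ and $\mathbbm{1}_{\{\alpha_2 \le \e^{\epsilon_1 t}\}}$ by smooth functions with Sobolev norms $\e^{O(Ks)}$ and $\e^{O(\epsilon_1 t)}$, uses $\mu(\cE'_{s,K}) \ll \e^{-(K+O(1))s}$, and absorbs the equidistribution error $\e^{-\delta t}$ when $st^{-1}$ is sufficiently small. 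Your reference to ``the crude estimates of Section~\ref{sec:smallmoments}'' only yields $\beta < \tfrac23$ and does not substitute for this.
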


The proof of Proposition \ref{prop:rank2avoidance} requires the following lemmas.

For $1\leq C_j \ll \e^t$, define
\begin{multline}
  \label{eq:cCdef2}
  \cC_{t,\eta, L}(C_1, C_2) = \{(c_j, d_j)_{j=1,2} : \gcd(c_j,d_j) = 1,\ C_j \leq |c_j| < 2 C_j,\\
  0 < |d_j| \leq 2x_j LC_j,\ |\tfrac{d_1}{c_1x_1} - \tfrac{d_2}{c_2x_2}| \leq \eta \e^{-2t}\}. 
\end{multline}

\begin{lem}
  \label{lem:cCcount}
  Suppose that $\tfrac{x_1}{x_2}$ is $\kappa$-Diophantine.
  \begin{enumerate}[{\rm (i)}]
  \item We have $|\cC_{t, \eta, L}(C_1, C_2)| = 0$ unless $(L C_1C_2)^{-\kappa} \ll \eta C_1C_2 \e^{-2t}$. 
    
  \item
    We have
    \begin{equation}
      \label{eq:cCcount}
      |\cC_{t, \eta, L}(C_1,C_2)|\ll_\epsilon
      %(\eta C_1C_2\e^{-2t} +(LC_1C_2)^{-\frac{1}{\kappa}})(LC_1C_2)^{1+\epsilon}
      \begin{cases}
        (\eta C_1C_2\e^{-2t} +(LC_1C_2)^{-\frac{1}{\kappa}})(LC_1C_2)^{1+\epsilon} \\
        (\eta C_1C_2\e^{-2t} +(\eta C_1C_2\e^{-2t})^{\kappa^{-2}})(LC_1C_2)^{1+\epsilon}
        % \eta^{\kappa^{-2}} (C_1C_2)^{1+ \kappa^{-2} + \epsilon} L^{1 + \epsilon} \e^{-2\kappa^{-2}t}.
      \end{cases}
    \end{equation}
    for any $\epsilon > 0$.
  \end{enumerate}
\end{lem}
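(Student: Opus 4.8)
The plan is to collapse the two‑dimensional count to a one‑variable Diophantine counting problem for $y:=x_1/x_2$. Given $(c_j,d_j)_{j=1,2}\in\cC_{t,\eta,L}(C_1,C_2)$, put $m=d_1c_2$ and $n=c_1d_2$; these are nonzero integers of size $1\le|m|,|n|\le 4\max(x_1,x_2)\,LC_1C_2$, and clearing denominators in the defining inequality gives
\[
  |m-yn| \;=\; x_1\,|c_1|\,|c_2|\,\Bigl|\tfrac{d_1}{c_1x_1}-\tfrac{d_2}{c_2x_2}\Bigr| \;\ll\; \eta\, C_1C_2\, \e^{-2t}.
\]
Write $P=LC_1C_2$ and $\Delta=\eta C_1C_2\e^{-2t}$. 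Part (i) is then immediate: if $\cC_{t,\eta,L}(C_1,C_2)\neq\emptyset$ such a pair $(m,n)$ exists, and the $\kappa$‑Diophantine condition applied with the nonzero denominator $n$ gives $|m-yn|\ge\min_{p\in\Z}|yn-p|=|n|\min_{p\in\Z}|y-p/n|\gg|n|^{-\kappa}\gg P^{-\kappa}$, whence $P^{-\kappa}\ll\Delta$.

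For part (ii), a tuple $(c_j,d_j)_j$ is recovered from its pair $(m,n)$ by choosing a divisor $c_2\mid m$ with $C_2\le|c_2|<2C_2$ (then $d_1=m/c_2$) and a divisor $c_1\mid n$ with $C_1\le|c_1|<2C_1$ (then $d_2=n/c_1$), so each $(m,n)$ has at most $\tau(|m|)\tau(|n|)\ll_\epsilon P^\epsilon$ preimages, and therefore
\[
  |\cC_{t,\eta,L}(C_1,C_2)| \;\ll_\epsilon\; P^\epsilon\cdot\#\bigl\{(m,n)\in\Z^2: 1\le|m|,|n|\ll P,\ |m-yn|\ll\Delta\bigr\}.
\]
If $\Delta\ge 1$, then each of the $\ll P$ admissible values of $n$ admits $\ll\Delta$ values of $m$, giving $\ll_\epsilon\Delta P^{1+\epsilon}$, which lies within both claimed bounds; so assume $\Delta<1$. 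Then each $n$ admits at most one admissible $m$, and only when $\dist(ny,\Z)\ll\Delta$, so it remains to bound $N:=\#\{1\le n\ll P:\dist(ny,\Z)\ll\Delta\}$.

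The second line of (ii) uses only the elementary gap bound: if $n<n'$ both lie in this set, then $\dist((n'-n)y,\Z)\ll\Delta$, so the $\kappa$‑Diophantine condition forces $n'-n\gg\Delta^{-1/\kappa}$; hence the admissible $n$ are $\gg\Delta^{-1/\kappa}$‑separated and $N\ll 1+P\Delta^{1/\kappa}$. Since $\Delta<1$ and $\kappa\ge1$ we have $\Delta^{1/\kappa}\le\Delta^{1/\kappa^2}$, and by part (i) the trailing $1$ satisfies $1\le P^{1-1/\kappa}\le P\,\Delta^{1/\kappa^2}$ whenever $\cC_{t,\eta,L}(C_1,C_2)\neq\emptyset$; combining with the divisor bound and the case $\Delta\ge1$ yields $|\cC_{t,\eta,L}(C_1,C_2)|\ll_\epsilon(\Delta+\Delta^{1/\kappa^2})(LC_1C_2)^{1+\epsilon}$. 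For the first line we need the sharper estimate $N\ll_\epsilon P^\epsilon(1+\Delta P+P^{1-1/\kappa})$. This is the standard count of lattice points in a thin parallelogram, obtained from the continued fraction expansion of $y$: writing $q_0<q_1<\cdots$ for the convergent denominators, the least admissible $n$ is comparable to the least $q_k$ with $\dist(q_ky,\Z)\ll\Delta$ (so $q_k\gg\Delta^{-1/\kappa}$ by the Diophantine bound), and a standard analysis shows the admissible $n$ in each block $[q_j,q_{j+1})\cap[1,P]$ number $\ll 1+\Delta q_{j+1}$, with the top block of course also bounded by $\ll 1+P/q_J$; geometric growth of the $q_j$ makes the lower blocks sum to $\ll\Delta q_J\le\Delta P$ (after the factor $\log P\ll_\epsilon P^\epsilon$), while for the top block the bound $q_{J+1}\ll q_J^\kappa$ and the dichotomy $q_J\le P^{1/\kappa}$ versus $q_J>P^{1/\kappa}$ give $\min(\Delta q_{J+1},P/q_J)\ll\Delta P+P^{1-1/\kappa}$. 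Feeding this into the divisor bound and absorbing the $1$ via part (i) gives the first line $|\cC_{t,\eta,L}(C_1,C_2)|\ll_\epsilon(\Delta+P^{-1/\kappa})(LC_1C_2)^{1+\epsilon}$.

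The main obstacle is this last, refined count $N\ll_\epsilon P^\epsilon(\Delta P+P^{1-1/\kappa})$: the crude separation argument only delivers the weaker $P\Delta^{1/\kappa}$, and recovering the correct exponent $1-\tfrac1\kappa$ in the error term genuinely requires the structure of best rational approximations of $y$ — that at each scale essentially all admissible $n$ are governed by a single convergent denominator $q_k$, together with the bound $q_{k+1}\ll q_k^\kappa$ encoding the $\kappa$‑Diophantine hypothesis and the truncation of the top block to $[1,P]$. Everything else (the passage to $(m,n)$, the divisor bound, and part (i)) is routine.
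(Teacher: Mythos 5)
Your reduction to counting integer pairs $(m,n)$ with $|m|,|n|\ll P:=LC_1C_2$ and $|m-yn|\ll\Delta:=\eta C_1C_2\e^{-2t}$, together with the divisor bound $\tau(|m|)\tau(|n|)\ll_\epsilon P^\epsilon$ to control the fibre over each $(m,n)$, matches the paper's reduction exactly (the paper writes $c=c_1d_2$, $d=c_2d_1$ where you write $n,m$), and your part (i) is the same as the paper's. Where you diverge is part (ii). The paper proves both lines of \eqref{eq:cCcount} with one arithmetic argument: pick $r/q$ coprime with $1\le q\le P$ and $|\tfrac{x_1}{x_2}-\tfrac{r}{q}|\le\tfrac{1}{Pq}$, so the Diophantine hypothesis gives $q^\kappa\gg P$; then $h:=cr-dq$ satisfies $|h|\ll q\Delta+1$, and $c\equiv h\bar r\bmod q$ with $d=(cr-h)/q$, so the divisor-weighted count is $\ll(\Delta+q^{-1})P^{1+\epsilon}$. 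The first line follows from $q^{-1}\ll P^{-1/\kappa}$ and the second from $\Delta\gg P^{-\kappa}\gg q^{-\kappa^2}$, i.e.\ $q^{-1}\ll\Delta^{1/\kappa^2}$, with no case split needed.

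Your proof splits the two lines into two separate arguments, establishing line 2 by the elementary separation bound $N\ll 1+P\Delta^{1/\kappa}$ (that part is fine and is a genuinely more elementary route to the second bound), and then invoking a continued-fraction block decomposition for line 1. As you candidly flag, that last step is the crux and you leave it at the level of a sketch: in particular, the assertion that the top block $[q_J,P]$ contributes $\ll 1+P/q_J$ is not justified by the three-gap heuristic alone, and once you go to prove it you end up writing $h=mq_J-np_J$, observing $|h|\ll q_J\Delta+1$, and noting $n$ is determined mod $q_J$ — which gives $N\ll\Delta P+P/q_J$ in one line and makes the whole block bookkeeping redundant. So: your strategy is sound and your part (i) and line 2 are complete, but the proof of line 1 is not; it should be replaced by (or completed via) the direct best-approximation/arithmetic-progression count, which is precisely what the paper does.
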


\begin{proof}
  We first note that since $c_1d_2 \neq 0$,
  \begin{equation}
    \label{eq:Diophantinelowerbound}
    |c_1d_2 \tfrac{x_1}{x_2} - c_2d_1| \geq |c_1d_2|^{-\kappa} \gg (LC_1C_2)^{-\kappa}.
  \end{equation}
  and (i) follows.
  
  For (ii), let $c = c_1d_2$ and $d = c_2d_1$.
  Let $1\leq q \leq LC_1C_2$ be so that $|\tfrac{x_2}{x_1} - \tfrac{r}{q}| \leq \tfrac{1}{LC_1C_2 q}$ for some integer $r$ coprime to $q$.
  Since $\frac{x_2}{x_1}$ is $\kappa$-Diophantine, we have $q^{\kappa}\gg LC_1C_2$.
  
  From $|c\tfrac{x_1}{x_2} - d| \ll \eta C_1 C_2 \e^{-2t}$, we have
  \begin{equation}
    \label{eq:hdef}
    h := cr - dq \ll \eta q C_1C_2 \e^{-2t}  + 1. 
  \end{equation}
    By assertion (i), we may assume $$\eta C_1C_2 \e^{-2t} \gg (LC_1C_2)^{-\kappa}\gg q^{-\kappa^2},$$
    since otherwise $\cC_{t, \eta, L}(C_1, C_2)$ is empty. 
  
  We note that $c \equiv h \overline{r} \bmod q$, and $d = \tfrac{cr - h}{q}$, so
  \begin{equation}
  \begin{split}
    \label{eq:cCcount1}
    |\cC_{t, \eta, L}(C_1, C_2)| & \ll \sum_{|h| \ll \eta q C_1 C_2 \e^{-2t} +1} \sum_{\substack{|c| \leq LC_1C_2 \\ c \equiv h\overline{r} \bmod q}} \tau(c) \tau(\tfrac{cr - h}{q})\\ & \ll \tfrac{1}{q} (\eta qC_1C_2 \e^{-2t} +1)(LC_1C_2)^{1+\epsilon},
  \end{split}
  \end{equation}
  where $\tau$ is the divisor function.
\end{proof}

Combined with Lemma \ref{lem:cCcount}, \cite[Theorem~1]{LMW2023a} quickly gives the following.
\begin{lem}
  \label{lem:rank2moments1}
  Let $g_0 = \left(\begin{pmatrix}    x_j^{\frac{1}{2}} & 0 \\
    0 & x_j^{-\frac{1}{2}}
  \end{pmatrix}
  \right)_{\!\! j=1,2}$ with $\tfrac{x_1}{x_2}$ $\kappa$-Diophantine.
  Then for any $1\leq L \ll \e^t$, $\beta < 1 + \kappa^{-2}$ and $\epsilon > 0$, we have
  \begin{equation}
    \label{eq:rank2moment1B}
    \int_{1\leq |\xi| \leq L} \alpha_2(g_0 n(\xi) a(t))^\beta \dd \xi \ll L.
  \end{equation}
\end{lem}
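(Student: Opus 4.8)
The plan is to split the $\xi$-range according to the size of $\alpha_2(g_0n(\xi)a(t))$: where it is at most a threshold $A:=\e^{\delta' t}$ (with $\delta'>0$ small, to be fixed below), effective equidistribution controls the average, while where it exceeds $A$ the Diophantine count of Lemma~\ref{lem:cCcount} controls it. First note that if $g_j$ denotes the $j$th component of $g_0n(\xi)a(t)$, then a glance at the (upper triangular) matrix $g_j$ shows that every nonzero vector of the lattice $\Z^2g_j$ has norm $\gg\e^{-t}$ uniformly in $\xi$, so $\alpha_2(g_0n(\xi)a(t))=\alpha(g_1)\alpha(g_2)\ll\e^{2t}$; in particular the claimed bound is trivial when $t$ is bounded, and we may assume $t$ large.

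\emph{The part where $\alpha_2\le A$.} I would bound $\alpha_2^\beta\mathbbm{1}_{\{\alpha_2\le A\}}$ from above by a nonnegative smooth, compactly supported function $\widetilde\psi_A$ on $\Gamma^2\backslash G^2$ with $\widetilde\psi_A\ll\alpha_2^\beta$ (a mollification of $\alpha_2^\beta$ cut off at height $A$), whose Sobolev norm satisfies $\cS(\widetilde\psi_A)\ll A^B$ for a fixed $B$. Since $\beta<1+\kappa^{-2}\le 2$, we have $\int_{\Gamma^2\backslash G^2}\alpha_2^\beta\dd\mu=\big(\int_{\Gamma\backslash G}\alpha^\beta\dd\mu\big)^2<\infty$, so Proposition~\ref{prop:effectiveequidistribution} (applied with $\varphi=\widetilde\psi_A$ and $\lceil L\rceil$ in place of $L$, together with the analogous statement on $-L\le\xi\le -1$) gives
\begin{equation*}
\int_{1\le|\xi|\le L}\alpha_2(g_0n(\xi)a(t))^\beta\mathbbm{1}_{\{\alpha_2\le A\}}\dd\xi\ \ll\ L\Big(\int_{\Gamma^2\backslash G^2}\alpha_2^\beta\dd\mu+A^B\e^{-\delta t}\Big)\ \ll\ L
\end{equation*}
once $\delta'\le\delta/B$.

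\emph{The part where $\alpha_2>A$.} Here I decompose dyadically. If $\alpha_2(g_0n(\xi)a(t))\ge A$ there are primitive integer vectors $(c_j,d_j)$ ($j=1,2$) and dyadic $\delta_1,\delta_2\ll 1$ with $\delta_1\delta_2\ll A^{-1}$ and $|(c_j,d_j)g_j|\le\delta_j$; an elementary computation then gives $c_j\ne 0$, $|c_j|\asymp C_j$ with $1\le C_j\le\delta_j\e^t x_j^{-1/2}$, $0<|d_j|\ll x_jLC_j$, and $\big|\xi+\tfrac{d_j}{c_jx_j}\big|\ll\eta_j\e^{-2t}$ where $\eta_j:=\tfrac{\delta_j\e^t}{C_jx_j^{1/2}}\ge 1$. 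Hence the quadruple $(c_j,d_j)_{j=1,2}$ lies in $\cC_{t,\eta,L}(C_1,C_2)$ with $\eta=2\max_j\eta_j$, and the $\xi$-measure of the set with these dyadic parameters is $\ll|\cC_{t,\eta,L}(C_1,C_2)|\,(\min_j\eta_j)\e^{-2t}$. Since $\delta_1\delta_2\ll A^{-1}<1$ one has $\eta C_1C_2\e^{-2t}\asymp\delta_1\delta_2/\min_j\eta_j<1$, so the \emph{second} bound in Lemma~\ref{lem:cCcount}(ii) is the relevant one; summing it against $(\min_j\eta_j)\e^{-2t}$ over $C_1,C_2$ with $C_1C_2\le\delta_1\delta_2\e^{2t}$ (the top of this range dominates, and there $\min_j\eta_j\asymp 1$), and then summing the resulting bound against $(\delta_1\delta_2)^{-\beta}$ over dyadic $\delta_1,\delta_2$ with $\delta_1\delta_2\ll A^{-1}$, I expect to arrive at
\begin{equation*}
\int_{1\le|\xi|\le L}\alpha_2(g_0n(\xi)a(t))^\beta\mathbbm{1}_{\{\alpha_2>A\}}\dd\xi\ \ll_\epsilon\ t^2L^{1+\epsilon}\e^{2\epsilon t}A^{-(1+\kappa^{-2}-\beta+\epsilon)}\ =\ L\cdot t^2L^{\epsilon}\e^{2\epsilon t}\e^{-\delta' t(1+\kappa^{-2}-\beta+\epsilon)}.
\end{equation*}
Because $\beta<1+\kappa^{-2}$ and $L\ll\e^t$, choosing $\epsilon$ small relative to $\delta'(1+\kappa^{-2}-\beta)$ makes the bracketed factor $\ll 1$, so this part is $\ll L$ as well. (Lemma~\ref{lem:cCcount}(i) may be used to narrow the range of $C_1C_2$ but is not needed for the upper bound.)

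The main obstacle is this last dyadic bookkeeping: one has to track the four parameters $\delta_1,\delta_2,C_1,C_2$, check that $C_1C_2\asymp\delta_1\delta_2\e^{2t}$ is the dominant scale and forces $\min_j\eta_j\asymp 1$ there, and verify that Lemma~\ref{lem:cCcount}(ii) outputs exactly the exponent $1+\kappa^{-2}$ (up to $\epsilon$) for $\delta_1\delta_2$ — so that the exponential gain $A^{-(1+\kappa^{-2}-\beta)}$ from the threshold absorbs the unavoidable $\e^{\epsilon t}$ and $L^\epsilon$ losses coming from the divisor-function bounds, which works precisely in the stated range $\beta<1+\kappa^{-2}$.
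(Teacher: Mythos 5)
Your proof is correct and follows essentially the same strategy as the paper: threshold at $A=\e^{\delta' t}$, handle the range where $\alpha_2\le A$ via effective equidistribution (Proposition~\ref{prop:effectiveequidistribution}) applied to a smooth compactly supported majorant of $\alpha_2^\beta\mathbbm 1_{\{\alpha_2\le A\}}$ with Sobolev norm polynomial in $A$, and control the range $\alpha_2>A$ by a dyadic decomposition in $\rho(\gamma_jg_j)$ and $|c_j|$ combined with the Diophantine lattice count of Lemma~\ref{lem:cCcount}. The only stylistic deviation is that, having observed $\eta C_1C_2\e^{-2t}\ll\delta_1\delta_2<1$ on the heavy tail, you invoke only the second alternative in Lemma~\ref{lem:cCcount}(ii) and let its second summand absorb the first (since $\kappa^{-2}\le1$), whereas the paper's write-up tracks the two summands of that bound separately; the resulting exponent $(\delta_1\delta_2)^{1+\kappa^{-2}-\beta+O(\epsilon)}$ and the final absorption of the $L^\epsilon\e^{O(\epsilon t)}$ losses by $A^{-(1+\kappa^{-2}-\beta)}$ are the same in both.
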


\begin{proof}
  We use Lemma \ref{lem:cCcount} to bound the contribution of $\xi$ such that $\alpha_2(g_0n(\xi)a(t)) > \e^{\epsilon_1 t}$.
  Since the indicator function of the region where $\alpha_2(g) \leq \e^{\epsilon_1 t}$ has a smooth compactly supported majorant with Sobolev norm $\ll \e^{O(\epsilon_1 t)}$, the remaining portion is bounded by means of Proposition \ref{prop:effectiveequidistribution}, noting that for $\beta < 2$,
  \begin{equation}
    \label{eq:alpha2average}
    \int_{\Gamma^2 \backslash G^2} \alpha_2(g_0 n(\xi) a(t))^\beta \dd \xi \ll 1
  \end{equation}
  and that by taking $\epsilon_1$ sufficiently small, the remainder term on the right of \eqref{eq:effectiveequidistribution} is also bounded. 

  We dyadically decompose according to $\tfrac{1}{2} \delta_j^{-1} < \alpha_1(g_j n(\xi) a(t))\leq \delta_j^{-1}$. 
  We note that from Lemma \ref{lem:basicfundamentaldomain}, $\gamma(g_0n(\xi)a(t)) = \left(\begin{pmatrix}    * & * \\
    c_j & d_j
  \end{pmatrix}
  \right)_{\!\! j=1,2}$ satisfies $0 < |c_j| \ll \delta_j \e^t$, $d_j \neq 0$, and $|\xi + \tfrac{d_j}{c_jx_j}| \ll \tfrac{\delta_j}{|c_j|\e^t}$, so in particular $|\tfrac{d_1}{c_1 x_1} - \tfrac{d_2}{c_2 x_2}| \ll (\tfrac{\delta_1}{|c_1|} + \tfrac{\delta_2}{|c_2|}) \e^{-t}.$
  Moreover, given such a $\gamma$, the measure of the associated $\xi$ is $O ( \min ( \tfrac{\delta_j}{c_j\e^t}))$ and therefore we obtain the bound
  \begin{multline}
    \e^{-t}\sum_{\substack{\delta_1,\delta_2 \ll \e^{-\epsilon_1 t} \\ \delta_j \ll 1\\ \mathrm{dyadic}}} (\delta_1\delta_2)^{-\beta} \sum_{\substack{1\leq C_j \ll \delta_j \e^t \\ \mathrm{dyadic}}} \min( \tfrac{\delta_j}{C_j}) |\cC_{t,(\delta_1 C_2 + \delta_2 C_1)\e^t,L}(C_1, C_2)|.
  \end{multline}

  We bound $\cC$ using Lemma \ref{lem:cCcount}.
  For $\epsilon_2 > 0$, the first term on the right of \eqref{eq:cCcount} contributes
  \begin{equation}
    \label{eq:cCcountfirstterm}
    L^{1+\epsilon_2} \e^{-2t} \sum_{\substack{\delta_1,\delta_2 \ll \e^{-\epsilon_1 t}\\ \delta_j \ll 1 \\ \mathrm{dyadic}}} (\delta_1\delta_2)^{1 -\beta} \sum_{\substack{1\leq C_j \ll \delta_j \e^t \\ \mathrm{dyadic}}} (C_1C_2)^{2 + \epsilon_2} \\
    \ll L^{1 + \epsilon_2} \e^{-(2 -\beta - 2\epsilon_2) \epsilon_1 t}.
  \end{equation}
  Using $L \ll \e^t$, $\beta < 2$ and taking $\epsilon_2$ sufficiently small makes this $O(L)$.
  
  Now using Lemma \ref{lem:cCcount} (i), the second term on the right of \eqref{eq:cCcount} contributes 
  \begin{multline}
    \label{eq:cCcountsecondterm}
    L^{1 + \epsilon_2 } \e^{-(1 + \kappa^{-2})t} \sum_{\substack{ \delta_1\delta_2 \ll \e^{-\epsilon_1 t} \\ \delta_j \ll 1\\ \mathrm{dyadic}}} (\delta_1 \delta_2)^{- \beta}
  \sum_{\substack{C_j \ll \delta_j \e^t \\\mathrm{dyadic}}} \min(\frac{\delta_j}{C_j}) \max( \frac{\delta_j}{C_j})^{\kappa^{-2}}(C_1C_2)^{1 + \kappa^{-2} + \epsilon_2} \\
    \ll L^{1 + \epsilon_2}\e^{2\epsilon_2 t} \sum_{\substack{ \delta_1\delta_2 \ll \e^{-\epsilon_1 t} \\ \delta_j \ll 1\\ \mathrm{dyadic}}} (\delta_1 \delta_2)^{1 + \kappa^{-2}- \beta}.
  \end{multline}
  Since $\beta < 1 + \tfrac{1}{\kappa^2}$, this is $O(L)$ by making $\epsilon_2$ sufficiently small.  
\end{proof}

\begin{proof}[Proof of Proposition \ref{prop:rank2avoidance}]
When $s$ is small, we apply Proposition \ref{prop:effectiveequidistribution}.
By Lemma \ref{lem:rank2moments1} we may restrict to the set $\cS$ of $\alpha_2(g) \leq \e^{\epsilon_1t}$.
Now for $g = \left(
\begin{pmatrix}
  * & * \\
  0 & * 
\end{pmatrix}
\begin{pmatrix}
  \cos \theta_j & -\sin \theta_j \\
  \sin \theta_j & \cos \theta_j
\end{pmatrix}
\right)_{\!\! j=1,2}$, $\kappa_2(g)\leq \e^{-Ks}$ implies $|\theta_1 - \theta_2| \ll \e^{-Ks}$.
Moreover, for any $g = (g_1,g_2) \in G^2$, the number of $\gamma = (\gamma_1, \gamma_2) \in \Gamma^2$ such that $\rho(\gamma_jg_j) \leq 10 \e^s$ is $\ll \e^{O(s)}$.
Therefore, we may find smooth majorants $\varphi_1$ and $\varphi_2$ for the indicator functions of $\cS$ and $\cE'_{s,K}$ so that their Sobolev norms are $\ll \e^{O(\epsilon_1 t)}$ and $\ll \e^{O(K s)}$ respectively and
\begin{equation}
  \label{eq:varphijint}
  \int_{\Gamma^2\backslash G^2} \varphi_1(g)\varphi_2(g) \dd \mu(g) \ll \e^{-(K + O(1))s}.
\end{equation}
If $st^{-t}$ is sufficiently small, the remainder term from Proposition \ref{prop:effectiveequidistribution} is also $\ll\e^{-(K + O(1))s})$.

For the complementary range of $s$, we dyadically decompose as in the proof of Lemma \ref{lem:rank2moments1}, obtaining the bound
\begin{equation}
  \label{eq:avoidancedyadicdecomp}
  \ll \e^{-t}\sum_{\substack{\delta_j \ll \e^s\\ \mathrm{dyadic}}}(\delta_1 \delta_2)^{-\beta} \sum_{\substack{1\leq C_j \ll \delta_j \e^t \\ \mathrm{dyadic}}} \min(\tfrac{\delta_j}{C_j}) |\cC_{t,\e^{-Ks},L}(C_1,C_2)|.
\end{equation}
By Lemma \ref{lem:cCcount} this is bounded by
\begin{equation}
  \label{eq:avoidancedyadicdecomp1}
  \ll \e^{-\kappa^{-2}Ks} L^{1 + \epsilon_2} \e^{2\epsilon_2 t} \sum_{\substack{\delta_j \ll \e^s\\ \mathrm{dyadic}}}(\delta_1 \delta_2)^{1 + \kappa^{-2}-\beta},
\end{equation}
so the required bound follows by taking $\epsilon_2$ sufficiently small. 
\end{proof}

\subsection{Avoidance in $\Gamma^3 \backslash G^3$}
\label{sec:avoidanceproof}

We first decompose the exceptional set $\cE_{s,K}$ as follows:
\begin{equation}
  \label{eq:cEdecomp}
  \cE_{s,K}\subseteq\bigcup_{\substack{1\leq i\neq j \leq 3 \\ 1\leq k \leq 3}}\cE_{s,K}(i,j, k),
\end{equation}
where
\begin{multline}
  \label{eq:decompparts}
  \cE_{s,K}(i, j):=\{\Gamma^3 g\in \Gamma^3 \backslash G^3: 
  \Delta_{i,j}(\gamma g)^2 \leq\Delta_{j}(\gamma g)< \e^{-Ks} \\
  \textrm{ for some } \gamma \in \Gamma^3 \textrm{ such that } \tfrac{1}{10} \e^{-s}\leq \rho(\gamma_k g_k),\ \max_{j=1,2,3} \rho(\gamma_j g_j) \leq \tfrac{1}{10} \e^{s}.\}
  \end{multline}
with
\begin{equation}
  \label{eq:Deltadef2}
  \Delta_{j}\left(\left(\begin{pmatrix}    a_k & b_k\\
    c_k & d_k
  \end{pmatrix}
  \right)_{k=1,2,3}\right)= \prod_{i \neq j}|\tfrac{d_{i}}{c_{i}}-\tfrac{d_{j}}{c_{j}}|
\end{equation}
and
\begin{equation}
  \label{eq:Deltaijdef}
  \Delta_{ij}\left(\left(\begin{pmatrix}    a_k & b_k\\
    c_k & d_k
  \end{pmatrix}
  \right)_{k=1,2,3}\right) = |\tfrac{d_i}{c_i} - \tfrac{d_j}{c_j}|. 
\end{equation}
Without loss of generality, we may assume $i=2$ and $j=3$ and estimate the contribution of $\cE_{s,K}(2,3,k)$.

When $k = 1$, we have that
\begin{multline}
  \label{eq:rank2reduction1}
  \int_{3\leq |\xi| \leq L} \alpha_3(g_0n(\xi) a(t))^\beta \mathbbm{1}_{\cE_{s,K}(2,3,1)}(g_0n(\xi)a(t)) \dd \xi \\
  \ll \e^{O(s)} \int_{1\leq |\xi| \leq L} \alpha_2(\pi_1(g_0)n(\xi) a(t))^\beta \mathbbm{1}_{\cE'_{s,K/2}}(\pi_1(g_0)n(\xi)a(t)) \dd \xi,
\end{multline}
where $\pi_1: \Gamma^3 \backslash G^3 \to \Gamma^2 \backslash G^2$ is given by $\pi_1(g_1,g_2, g_3)= (g_2, g_3)$.
Now \eqref{eq:avoidanceMargulis} with $\cE_{s,K}$ replaced with $\cE_{s,K}(2,3,1)$ follows from Proposition \ref{prop:rank2avoidance} with $\mu < \tfrac{1}{2\kappa^2}$. 

When $k = 2$ (the case $k=3$ is similar), we have
$$\cE_{s,K}(2,3,2) \subset \bigcup_{\substack{K_1 + K_2 \geq K-1 \\ K_2 \geq K_1 -1}} \cE_{s,K_1,K_2}(2,3,2),$$
where $K_1, K_2$ are integers and
\begin{multline}
  \label{eq:etadef}
  \cE_{s,K_1,K_2}(2,3,2) = \{ g \in \Gamma^3 \backslash G^3 : \e^{-(K_i +1)s} < \Delta_{i3}(\gamma g) \leq \e^{-K_i s} \\
  \textrm{ for some } \gamma \in \Gamma^3 \textrm{ such that } \tfrac{1}{10} \e^{-s}\leq  \rho(\gamma_2 g_2),\ \max_{j=1,2,3} \rho(\gamma_j g_j) \leq \tfrac{1}{10} \e^{s}\}.
\end{multline}
Let us consider $K_2 \leq AK$ for some $A > 0$.
Let $p < \tfrac{1 + \kappa^{-2}}{\beta} < 2$.
By Holder's inequality, we have
\begin{multline}
  \label{eq:holder1}
  \sum_{\substack{K_1 + K_2 \geq K-1 \\ K_2 \leq AK}} \int_{3 \leq |\xi| \leq L} \alpha_2(\pi_2(g_0)n(\xi) a(t))^\beta \chi_{K_1}(g_0n(\xi)a(t)) \chi_{K_2}(g_0n(\xi)a(t)) \dd \xi \\
  \leq \sum_{\substack{K_1 + K_2 \geq K-1 \\ K_2 \leq AK}}\left( \int_{3\leq |\xi| \leq L} \alpha_2(\pi_2(g_0)n(\xi) a(t))^{p \beta} \tilde{\chi}_{K_1} (\pi_2(g_0)n(\xi) a(t))\dd \xi\right)^{\tfrac{1}{p}} \\
  \times \left(\int_{3 \leq |\xi| \leq L} \tilde{\chi}_{K_2}(\pi_1(g_0)n(\xi) a(t))\right)^{1 - \frac{1}{p}},
\end{multline}
with $\chi_{K_i}$ the indicator functions of the sets
\begin{multline}
  \label{eq:etasets}
  \{\Gamma^3 g\in \Gamma^3 \backslash G^3: \e^{-(K_i + 1)s} < \Delta_{i,3}(\gamma g) \leq \e^{-K_i s} \\
  \textrm{ for some } \gamma \in \Gamma^3 \textrm{ such that }
  \max_{1\leq k\leq 3}\rho(\gamma_k g_k)\leq 10 \e^s\}
\end{multline}
and $\tilde \chi_{K_i}$ is the indicator function of the associated projections of the sets \eqref{eq:etasets} to $\Gamma^2 \backslash G^2$ under $(g_1,g_2,g_3) \mapsto (g_i,g_3)$.
By Proposition \ref{prop:rank2avoidance}, \eqref{eq:holder1} is bounded by
\begin{equation}
  \label{eq:dyadicetasum}
  L \e^{O(s)}\sum_{\substack{K_1 + K_2 \geq K-1 \\ K_2 \leq AK}} \e^{-\frac{1}{\kappa^2 p}K_1 s} \e^{-\frac{1}{\kappa^2}(1 - \tfrac{1}{p})K_2 s }
  \ll L\e^{-((\frac{1}{\kappa^2 p} - \frac{A}{\kappa^2} (\frac{2}{p} - 1) )K + O(1))s}.
\end{equation}

Now consider the case $K_2 \geq AK$.
We let $\hat\pi_j : \Gamma^3 \backslash G^3 \to \Gamma\backslash G $ be given by $\hat\pi_j(g_1,g_2,g_3) = g_j$.
We let $\delta_1 > 0$ be so that $2\beta - \delta_1 < 1 + \tfrac{1}{\kappa^2}$, and we let $\tfrac{1}{p_1} + \tfrac{1}{q_1} = 1$ be so that $\beta_1 = 2p_1(\beta - \delta_1) < 1 + \tfrac{1}{\kappa^2}$ and $\beta_2 = q_1 \delta_1 < 1 + \tfrac{1}{\kappa^2}$.
Applying Holder we obtain the bound
\begin{multline}
  \label{eq:verysmalleta2}
  \sum_{\substack{K_1 + K_2 \geq K-1 \\ K_2 \geq AK}} \left(\int_{1\leq |\xi| \leq L} \alpha_1(\hat\pi_1(g_0)n(\xi) a(t))^{\beta_1}\dd \xi\right)^{\frac{1}{2p_1}} \\
    \times \left(\int_{1\leq |\xi| \leq L} \alpha_1(\hat\pi_3(g_0)n(\xi) a(t))^{\beta_1} \chi_{K_2}(g_0n(\xi)a(t)) \dd \xi\right)^{\frac{1}{2p_1}} \\
  \times \left(\int_{1\leq |\xi| \leq L} \alpha_2(\pi_2(g_0)n(\xi) a(t))^{\beta_2} \chi_{K_1}(g_0n(\xi)a(t)) \dd \xi\right)^{1 - \frac{1}{p_1}},
\end{multline}
and applying Proposition \ref{prop:rank2avoidance}, we obtain
\begin{equation}
  \label{eq:verysmalleta2eta1sum}
  L \e^{O(s)}\sum_{\substack{K_1 + K_2 \geq K-1 \\ K_2 \geq AK}}  \e^{-\frac{1}{2\kappa^2p_1}K_2s} \e^{-\frac{1}{\kappa^2q_1}K_1s} \\
  \ll L \e^{-(\frac{1}{\kappa^2}(1 - \frac{1}{p_1}) + \frac{A}{\kappa^2}(\frac{3}{2 p_1} - 1)K + O(1))s}
\end{equation}
assuming $p_1 < \tfrac{3}{2}$. 
We note that if one takes $\kappa$ and $\beta$ arbitarily close to $1$, then $q_1$ can be taken arbitarily large.
Then taking $A$ sufficiently large, one can then take $p$ sufficiently close to $2$ so that \eqref{eq:avoidanceMargulis} holds with $\mu > \tfrac{1}{3}$.
% optimize for kappa_0??

\section*{Data access statement}
No new data were generated or analysed during this study.

\bibliographystyle{amsalpha}
\bibliography{references}
\def\cprime{$'$} \def\cprime{$'$} \def\cprime{$'$}
\providecommand{\bysame}{\leavevmode\hbox to3em{\hrulefill}\thinspace}
\providecommand{\MR}{\relax\ifhmode\unskip\space\fi MR }
% \MRhref is called by the amsart/book/proc definition of \MR.
\providecommand{\MRhref}[2]{%
  \href{http://www.ams.org/mathscinet-getitem?mr=#1}{#2}
}

\end{document}